\def\Q{{Q}}
\def\OO{\mathcal{O}}
\def\ff{\frak}
\def\Spec{\mbox{\rm Spec}\,}
\def\Max{\mbox{\rm Max}\,}
\def\min{\mbox{\rm Min}\,}
\def\End{\mbox{\rm E}}
\def\cal{\mathcal}
\def\QQ{\mathcal{Q}}
\def\ff{\frak}
\def\Spec{\mbox{\rm Spec}\:}
\def\ispec{\mbox{\rm Irr}\:}
\def\imin{\mbox{\rm Min}\:}
\def\End{{E}}
\def\cal{\mathcal}
\def\X{\mathfrak{X}}
\def\U{\mathcal{U}}
\def\QQ{\mathop{\rm Quot}}
\def\PPD{{\mathbb{P}}^1_D}
\def\patch{\mbox{\rm patch}}
\def\gen{\mbox{\rm gen}}
\def\pt{\mbox{\rm pt}}
\def\inv{{\rm inv}}
\def\cl{\mbox{\rm cl}}
\def\Kr{{\rm Kr}}
\def\ev{{\rm ev}}
\def\cal{\mathcal}
\def\PPDn{{\mathbb{P}}^n_D}
\def\U{{\cal U}}
\def\V{{\cal V}}
\newcommand{\up}{{\uparrow}}
\newcommand{\down}{{\downarrow}}
\begin{document}

\title*{Topological aspects of irredundant intersections of ideals  and valuation rings}
\author{Bruce Olberding}
\institute{Bruce Olberding \at New Mexico State University, Department of Mathematical Sciences, Las Cruces, NM 88003-8001 \email{olberdin@nmsu.edu}
}
%
%
\maketitle

\abstract*{Each chapter should be preceded by an abstract (10--15 lines long)}

\abstract{An intersection of sets $A = \bigcap_{i \in I}B_i$ is irredundant if no $B_i$ can be omitted from this intersection. We develop a topological approach to irredundance by introducing a notion of a spectral representation, a spectral space whose members are sets that intersect to a given set $A$ and whose topology encodes set membership. We define a notion of a minimal representation and show that for such representations, irredundance is a topological property. We apply this approach to intersections of valuation rings and ideals. In the former case we focus on Krull-like domains and Pr\"ufer $v$-multiplication domains, and in the latter on irreducible ideals in arithmetical rings. Some of our main applications are to those rings or ideals that can be represented with a Noetherian subspace of a spectral representation. 
}

$\:$

{\noindent}{\bf Keywords} $\:$  $\bullet$ Zariski-Riemann space  $\bullet$ valuation ring $\bullet$ Krull domain $\bullet$ Pr\"ufer domain $\bullet$ Pr\"ufer $v$-multiplication ring $\bullet$ spectral space

$\:$

{\noindent}{\bf Mathematics Subject Classification (2010):} 
13F30, 13F05, 13A15

\section{Introduction}

The goal of this article is to develop a topological framework for recognizing and dealing with an irredundant infinite intersection of ideals, subrings, submodules,  even sets. While our main interest here is in the intersection of valuation rings, we include one application to the intersection of irreducible ideals in arithmetical rings to illustrate how the framework applies in a different setting. 
A key requirement for our point of view is that the objects from which the intersection is formed be drawn from  a spectral space whose topology encodes  set membership. The Zariski  topology on the set of irreducible ideals of an arithmetical  ring provides one such context, while the inverse topology on the Zariski-Riemann space of valuation rings of a field is another. Several other contexts to which our approach applies, and which we do not pursue, are given in Example~\ref{spectral exs}. 


Irredundance of  intersections of valuation rings is often a consequential and special phenomenon. For example, if $F/k$ is  a finitely generated field extension of transcendence degree one with $k$ algebraically closed in $F$, and $\X$ is the set of all valuation rings containing $k$ and having quotient field $F$, then $k = \bigcap_{V \in \X}V$ and this intersection is irredundant. Thus $\X$ is  the unique representation of $k$ as an (irredundant) intersection of valuation rings in $\X$. This is a consequence of  Riemann's Theorem for projective curves and is closely related to the Strong Approximation Theorem for such curves \cite[Theorem 2.2.13]{Gol}. If, however, $F/k$ has transcendence degree $>1$, then $k$ can still be represented  by an irredundant intersection of valuation rings (albeit by very specially selected subsets of $\X$), but there exist infinitely many such representations. Such examples can be constructed along the lines of \cite[Example 6.2]{OIrr}.

In general, the existence, much less uniqueness, of an irredundant representation of a ring can only be expected under circumstances where ``few'' valuation rings are needed to represent the ring. For example, Krull domains can all be represented by an irredundant intersection of valuation rings, but this ultimately depends on the fact that they can be represented by a finite character intersection of valuation rings; see Section 5. On the other hand, if $F/k$ is a function field in more than one variable and $k$ is existentially, but not algebraically, closed in $F$, and $A$ is the intersection of all the valuation rings in $F/k$ having  residue field $k$, then no representation  $X$ of $A$ as an intersection of valuation rings contains an irredundant member; i.e., any member of $X$ can be omitted and the intersection will remain $A$; see  \cite[Theorem 4.7]{OlbH}.  This last example is even a Pr\"ufer domain and hence has the property that every valuation ring between $A$ and its quotient field is a localization of $A$. Thus even for classes of rings whose valuation theory is explicitly given by their prime spectra, intersections of valuation rings can behave in complicated ways.  

In Section 3 we develop a topological approach to these issues for intersections of sets, where the sets themselves can be viewed as points in a spectral space. The prime ideals of a ring or the valuation rings of a field comprise such sets when viewed with the appropriate topologies, but also so do the irreducible ideals in an arithmetical ring. 
Throughout this article we are particularly interested in Noetherian spectral spaces, and in Section 2 we work out some of the properties of these spaces when viewed under the inverse or patch topologies. (These topologies are reviewed in Section 2.) Krull domains, and  generalizations of these rings of  classical interest,  can be represented by intersections of valuation rings drawn from a Noetherian subspace of a spectral space, and we apply the  results from Sections 2 and 3 in Sections 5 and 6 to intersections of valuation rings from a Noetherian subspace of the Zariski-Riemann space of a field.

Sections 4-7 contain the main applications of the article. Section 4 applies the abstract setting of spectral representations to the Zariski-Riemann space of a field. This section recasts the abstract approach in Section 3 into  a topological framework for working with irredundance in intersections of valuation rings.
 Section 5 specializes the discussion to the Krull-like  rings of classical interest and recaptures the representation theorems for these rings. 
  A feature throughout Sections 4 and 5 that is afforded by the abstract approach of spectral representations  is that intersections of valuation rings can be considered relative to a subset of the ambient field. The motivation for this comes from the articles 
 \cite{AHE, HOOlb, LTOlber, ONoeth,OT}.  In these studies, one considers integrally closed domains  $A$ between a given domain and overring, e.g., between ${\mathbb{Z}}[T]$ and ${\mathbb{Q}}[T]$. In such cases $A$ is an intersection of ${\mathbb{Q}}[T]$ and valuation rings not containing ${\mathbb{Q}}[T]$. Since ${\mathbb{Q}}[T]$ can be viewed as always present in these representations, it is helpful then  to consider representations of a ring $A$ of the form $A = (\bigcap_{V \in X}V) \cap C$, where $C$ is a fixed ring.  The approach provided by Section 3 makes it easy to incorporate a fixed member $C$ of the representation into such a picture, regardless of whether $C$ is a ring or simply a set.  

 The already well-understood theory of irredundance for Pr\"ufer domains also can be recovered from our framework, and this is done in Section 6 in the more general setting of Pr\"ufer $v$-multiplication domains.  We consider existence and uniqueness for irredundant representations of such domains, with special emphasis on the case in which the space of $t$-maximal ideals is Noetherian. When restricted to a Pr\"ufer domain $A$, these results specialize to a topological characterization of the property that every overring of  $A$ is an irredundant intersection of the valuation rings that are minimal over it.

In order to help justify the generality of the approach Section 3, we show in Section 7 how the topological framework can be applied to the study of irredundant intersections of irreducible ideals in arithmetical rings. We show in particular how  intersection decomposition results involving such ideals can be recovered from our point of view. 
 This section is  independent of the valuation-theoretic Sections 4--6.  
 
 I thank the referee for helpful comments that improved the clarity of some of the arguments.

\section{Spectral spaces}

A {\it spectral space} is a $T_0$ topological space having (a)  a basis of quasicompact open sets closed under finite intersections, and (b) the property that every irreducible closed subset has a unique generic point, i.e., a point whose closure is the irreducible closed set. By a theorem of Hochster \cite[Corollary, p.~45]{Hoc}, 
a topological space $X$ is spectral if and only if $X$ is homeomorphic to the prime spectrum of a ring. In the setting of this paper, it is mostly the topological features of spectral spaces  that are needed rather than the connection with prime spectra of rings. 

A spectral space $X$ admits two other well-studied topologies that are useful in our context. The {\it inverse topology} on $X$ has as a basis of closed sets the subsets of $X$ that are quasicompact and open in the spectral topology.  By an {\it inverse closed} subset of $X$ we mean a subset that is closed in the inverse topology. The {\it patch topology} has as a basis of open sets the sets of the form $U \cup V$, where $U$ is open and quasicompact in the spectral topology and $V$ is the complement of a quasicompact open set. These basic open sets are also closed, so that the patch topology is zero-dimensional and Hausdorff. A {\it patch} in $X$ is a set that is closed in the patch topology.  In this section we denote the closure of a subset $Y$ of $X$ in the spectral topology as $\overline{Y}$, and  the closure of $Y$ in the patch topology as $\widetilde{Y}$. 

The patch topology refines both the spectral and inverse topologies. This can be made more precise using the {\it specialization order} of the spectral topology: If $x,y \in X$, then $x \leq y$ if and only if $y \in \overline{{\{x\}}}$ in the spectral topology. With this order in mind, we define for  $Y \subseteq X$, 
\begin{description}[(3)]
\item[] $\up Y     = \{x \in X:x \geq y$ some $y \in Y\}$ \: and \: $\down Y =\{x \in X:x \leq y$ some $y \in Y\}$,  
\item[\:] $\min Y:=\{y \in Y:y$ is minimal in $Y$ with respect to $\leq \}$, 
\item[\:] $\Max Y:=\{y \in Y:y$ is maximal in $Y$ with respect to $\leq \}$. \end{description}

\begin{proposition}
 \label{Hoc prop} Let $X$ be a spectral space with specialization order $\leq$.  Then 
\begin{description}[(3)]
\item[{\em (1)}] $X$ with the inverse topology is a spectral space whose specialization order is the reverse of that of $(X,\leq)$. 
\item[{\em (2)}] $X$ with the patch topology is a spectral space, and in particular a compact Hausdorff zero-dimensional space. 
\item[{\em (3)}]   For each $Y \subseteq X$,  $\overline{Y} = \up(\widetilde{Y})$  and the closure of $Y$ in the inverse topology is $ \down(\widetilde{Y})$. 
\item[{\em (4)}] 
If $Y$ is a patch in $X$, then the following statements hold. 
\begin{description}[(a)]
\item[{\em (a)}]   $Y$ is spectral in the subspace topology. 
\item[{\em (b)}]  For each $y \in Y$ there exists $m \in \min Y$ with $m \leq y$.   
\item[{\em (c)}] The patch and spectral topologies agree on $\min Y$.
\end{description}

\end{description}
\end{proposition}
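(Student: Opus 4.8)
The plan is to take parts (1) and (2) as essentially Hochster's classical facts and to concentrate the real work on (3) and (4), which I would derive from the compactness of the patch topology together with the way the specialization order is read off from the basis of quasicompact open sets. The one genuinely new ingredient in (1) is the order reversal, and for that I would note that, since the quasicompact opens form a basis closed under finite intersection, $x \leq y$ holds exactly when every quasicompact open containing $y$ contains $x$. The inverse topology has the complements of quasicompact opens as a subbasis of open sets, so $x \leq_{\mathrm{inv}} y$ holds exactly when, for every quasicompact open $Q$, $x \in Q$ implies $y \in Q$. These two conditions are formal reverses of one another, so $\leq_{\mathrm{inv}}$ is the opposite order; that the inverse topology is again spectral is Hochster's theorem. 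For (2), the clopenness of the patch basic sets already yields the Hausdorff zero-dimensional assertion as noted in the text, and patch-compactness together with spectrality is Hochster's result on the constructible topology.

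For (3) the key lemma is: \emph{if $P$ is a patch, then $\up P$ is spectral-closed.} I would prove this by showing $X \setminus \up P$ is open: given $x \notin \up P$, each $p \in P$ satisfies $p \not\leq x$, so there is a quasicompact open $Q_p$ with $x \in Q_p$ and $p \notin Q_p$; the patch-clopen sets $X \setminus Q_p$ cover $P$, and patch-compactness from (2) yields finitely many whose union contains $P$, so $Q := Q_{p_1} \cap \cdots \cap Q_{p_n}$ is a quasicompact open neighborhood of $x$ disjoint from $P$, and $Q \cap \up P = \emptyset$ follows again by reading $\leq$ off quasicompact opens. Granting this, $\up \widetilde{Y}$ is spectral-closed and contains $Y$, so $\overline{Y} \subseteq \up \widetilde{Y}$; conversely $\overline{Y}$ is spectral-closed, hence an up-closed patch containing $Y$, so $\overline{Y} \supseteq \up \widetilde{Y}$, giving $\overline{Y} = \up \widetilde{Y}$. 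The inverse statement then follows by duality: by (1) the inverse space is spectral with the reversed order, and its patch topology coincides with that of $X$ (the patch topology is generated symmetrically by the quasicompact opens and their complements), so applying the identity just proved in the inverse space turns $\up$ into $\down$ and yields the inverse closure as $\down \widetilde{Y}$.

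For (4) I would prove (b) and (c) first and deduce (a). For each $c \in Y$ the set $\down\{c\} \cap Y$ is a patch, since $\down\{c\}$ is the inverse closure of a point by (3) and hence patch-closed; so for a chain below a fixed $y$ the corresponding family of patches has the finite intersection property, patch-compactness produces a common lower bound, and Zorn then gives a minimal element of $Y$ below $y$, proving (b). For (c) I must show each quasicompact open $Q$ meets $\min Y$ in a relatively spectral-closed set. If $m \in \min Y$ lies in the relative closure of $Q \cap \min Y$, then every quasicompact open neighborhood $R$ of $m$ meets $Q \cap \min Y$, so the patches $R \cap Q \cap Y$ have the finite intersection property; a common point $p$ satisfies $p \in Q \cap Y$ and $p \leq m$, whence $p = m$ by minimality and $m \in Q$. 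Thus $Q \cap \min Y$ is relatively closed, and the patch and spectral topologies agree on $\min Y$.

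Finally (a): $Y$ is spectral-quasicompact because it is patch-compact, and the sets $Q \cap Y$ form a basis of quasicompact opens closed under finite intersection (each is a patch, hence spectral-quasicompact). The substantive point is sobriety, which I would obtain from (b) and (c). An irreducible spectral-closed $C \subseteq Y$ is itself a patch, so $\min C \neq \emptyset$ by (b) and carries the Hausdorff patch topology by (c); if $\min C$ had two points they could be separated by relatively open sets, which extend to open sets of $C$ that must meet by irreducibility, and since open sets are down-closed for $\leq$, a minimal point in the intersection would contradict the separation. Hence $\min C = \{\eta\}$, and $C = \mathrm{cl}_Y\{\eta\}$, with uniqueness from $T_0$. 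I expect the genuine difficulty to sit here in (a): establishing sobriety of an arbitrary patch, where the incomparable-minimal-points argument combining (b), (c), and irreducibility is the delicate step, whereas (3) and (c) turn on the same compactness-plus-finite-intersection technique and (1) and (2) are Hochster's theorems.
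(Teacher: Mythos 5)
Your argument is correct, but it takes a genuinely different route from the paper, which disposes of the entire proposition by citation: (1), (2) and (4)(a) are quoted from Hochster \cite{Hoc} (Proposition 8, Section 2, and Proposition 9 respectively), (3) is deduced from \cite[Corollary, p.~45]{Hoc} together with (1), (4)(b) from (4)(a) via the existence of minimal elements in a spectral space, and (4)(c) from Schwartz--Tressl \cite[Corollary 2.6]{ST}. You instead accept only the two structural theorems of Hochster behind (1) and (2) and then prove everything else by a single uniform technique: read the specialization order off the quasicompact open sets and exploit patch-compactness through the finite intersection property. Your key lemma that $\up P$ is closed for a patch $P$ is exactly the content of the citation the paper uses for (3); your Zorn argument for (4)(b) replaces the paper's appeal to minimal elements of the spectral space $\down\{y\}\cap Y$; and your direct proofs of (4)(c) and of sobriety of a patch replace the references to \cite{ST} and to \cite[Proposition 9]{Hoc}. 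What your version buys is a self-contained and reusable argument (the same compactness pattern reappears in Lemma~\ref{minimal exist} and Theorem~\ref{Noetherian prop}); what the paper's version buys is brevity. The only step where you lean on something not quite spelled out is the ``duality'' passage in (3): to transport the identity $\overline{Y}=\up\widetilde{Y}$ to the inverse space you need that the patch topology of $X$ with the inverse topology coincides with the patch topology of $X$, which requires knowing that the quasicompact open sets of the inverse topology are precisely the complements of the quasicompact open sets of $X$; this is part of Hochster's Proposition 8, which you already invoke for (1), and alternatively you could avoid it entirely by running your $\up P$ argument symmetrically to show directly that $\down P$ is inverse closed for any patch $P$.
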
  

\begin{proof} Statement (1) can be found in \cite[Proposition 8]{Hoc}; statement (2) can be deduced from \cite[Section 2]{Hoc}.  Statement (3) is a consequence of \cite[Corollary, p.~45]{Hoc} and (1).   
 Statement (4)(a) follows from \cite[Proposition 9]{Hoc}.  Statement (4)(b) now 
   follows from (a), since a spectral space has minimal elements. Finally, the spectral and patch topologies agree on the set of minimal elements of  a spectral space \cite[Corollary 2.6]{ST}, so (4)(c) follows from (4)(a).  
\qed\end{proof}

We give now a list of examples of some  spectral spaces in our context. We only use a few of these examples in what follows, but the intersection representation theory developed in the next section applies to all of them. As we indicate, several of  these examples have appeared in the literature before, but with different proofs than what we give here. Our approach is inspired by a theorem of Hochster \cite[Proposition 9]{Hoc} that a topological space is spectral if and only if it is homeomorphic to a patch closed subset of a power set endowed with the hull-kernel topology. Interestingly, inspection of Zariski and Samuel's proof in \cite{ZS} that the Zariski-Riemann space $\X$ of a field is quasicompact shows that although their work predated the notion of spectral spaces, what is proved there is that $\X$ is a patch closed subset of a certain spectral space, and hence from their argument can be deduced the fact that $\X$ is spectral. 

To formalize the setting of the example, 
let $S$ be a set. We denote by ${\mathbf{2}}^S$ the power set of $S$ endowed with the hull-kernel topology having  as an open basis  the sets of the form $\U(F) := \{B \subseteq S:F \not \subseteq B\}$, where  $F$ is a finite subset of $S$.   The complement of $\U(F)$ is denoted $\V(F)$; i.e., $\V(F) = \{V \subseteq S:F \subseteq B\}$. 
Then the sets $\U(F)$ are quasicompact and 
 ${\mathbf{2}}^S$ is a spectral space; cf.~\cite[Theorem 8 and~Proposition 9]{Hoc}.  Thus by Proposition~\ref{Hoc prop}, to show that a collection $X$ of subsets of $S$ is a spectral space in the subspace topology, it is enough to show that $X$ is patch closed in $S$. Specifically, what must be shown is that $X$ is an intersection of sets of the form $\V(F_1) \cup \cdots \cup \V(F_n) \cup \U(G)$, where $F_1,\ldots,F_n,G$ are  finite subsets of $S$.  
This is done in each case by encoding the question of whether a given subset of $S$ satisfies a first-order property in the relevant language into an assertion about membership in a  set of the form $\V(F_1) \cup \cdots \cup \V(F_n) \cup \U(G)$. This amounts in most cases to rewriting a statement of the form ``$p \rightarrow q$'' as ``(not $p$) or $q$.''  Because the goal is to produce patch closed subsets,  statements involving universal quantifiers (which translate into intersections) are  more amenable to this approach than statements involving existential quantifiers (which translate into infinite unions).  

To clarify terminology, when $R$ is a ring, the {\it Zariski topology} on a collection $X$ of ideals of $R$ is the hull-kernel topology defined above; i.e., it is simply the subspace topology on $X$ inherited from ${\mathbf{2}}^R$. This agrees with the usual notion of the Zariski topology on $\Spec R$.   However, when $S$ is a ring and and $X$ is a collection of subrings of $S$, then the {\it Zariski topology} on $X$ is the inverse of the hull-kernel topology; i.e., it has an open basis consisting of sets of the form $\V(G)$, where $G$ is a finite subset of $S$. Despite the discrepancy, it is  natural to maintain it in light of  the fact that 
 when $R$ is a subring of  a field $F$,  then with these definitions,  $\Spec R$  with the Zariski topology is homeomorphic to the space $\{R_P:P \in \Spec R\}$ of subrings of $F$ with the Zariski topology. This discrepancy, which is due to Zariski, also allows for an 
identification between projective models and projective schemes; cf.~\cite{ZS} for the notion of a projective model.

\begin{example} \label{spectral exs} 
(1)  {\it The  set of all proper ideals of a ring $R$ is a spectral space in the Zariski topology.} 
The set of proper ideals in $R$ is precisely the patch closed subset of  ${\mathbf{2}}^R$ given by
$$X_1 = \U(1) \cap (\bigcap_{a,b \in R} \U(a,b) \cup \V(a+b)) \: \cap \:  (\bigcap_{a,r \in R} \U(a) \cup \V(ra)).$$ 

(2) {\it If $R$ is a ring, the set of  all  submodules of an $R$-module is a spectral space in the Zariski topology.}  An easy modification of (1) shows this to be the case. 

\smallskip

(3) {\it The set  of all radical ideals of a ring $R$ is a spectral space in the Zariski topology.} The set of radical ideals  is precisely the patch closed subset of  ${\mathbf{2}}^R$ given by
$$X_3 = X_1 \cap (\bigcap_{a \in R,n>0} (\U(a^n) \cup \V(a))).$$ 

(4) {\it If $R $ is a ring such that $aR \cap bR$ is a finitely generated ideal of $R$ for all $a,b \in R$, then the set of all proper strongly irreducible ideals is a spectral space in  the Zariski topology.}  Recall that an ideal $I$ of $R$ is strongly irreducible if whenever $J \cap K \subseteq I$, then $J \subseteq I$ or $K \subseteq I$; equivalently, $I$ is strongly irreducible if and only if whenever $a,b \in R$ and $aR \cap bR \subseteq I$, it must be that $a \in I$ or $b \in I$. Thus the set of strongly irreducible proper ideals in $R$ is  given by
 $$X_4 = X_1 \cap (\bigcap_{a,b\in R} (\U(aR \cap bR) \cup \V(aR) \cup \V(bR))).$$
By assumption, for each $a,b \in R$,  $aR \cap bR$ is a finitely generated ideal of $R$, so the set $\U(aR \cap bR)$ is quasicompact and open. Therefore, $X_4$ is a patch closed subset of ${\mathbf{2}}^R$.  This example will be used in Section 7. 
\smallskip

(5) (Finocchiaro \cite[Proposition 3.5]{Fin})  {\it 
Let $R \subseteq S$ be an extension of rings. The set of rings between $R$ and $S$ with the Zariski topology is a spectral space.} The set of rings between $R$ and $S$ is given by the patch closed set
$$X_5 = (\bigcap_{r \in R} \V(r)) \cap (\bigcap_{a,b \in S}\U(a,b) \cup \V(a+b,ab)).$$ The Zariski topology on $X_5$ is the inverse topology of the subspace topology on $X_5$ inherited from  ${\mathbf{2}}^S$, so by Proposition~\ref{Hoc prop}, $X_5$ is spectral in the Zariski topology.  
\smallskip

(6)  (Finocchiaro \cite[Proposition 3.6]{Fin})  {\it Let $R \subseteq S$ be an extension of rings. The  set of all integrally closed rings between $R$ and $S$ with the Zariski topology  is a spectral space.} Let $\cal M$ denote the set of monic polynomials in $S[T]$, and for each $f \in {\cal M}$, let $c(f)$ denote the set of coefficients of $f$.  
 The set of integrally closed rings between $R$ and $S$ is given by the patch closed set
$$X_6= X_5 \cap \left(\bigcap_{s \in S} \left(\bigcap_{f \in {\cal M},f(s) = 0}\U(c(f)) \cup \V(s)\right)\right).$$ As in (5), this implies that $X_7$ is spectral in the Zariski topology. 
\smallskip

(7) (Finocchiaro-Fontana-Spirito \cite[Corollary 2.14]{FFS}) {\it Let $R$ be a subring of a field $F$.  The set of all local rings between $R$ and $F$ with the Zariski topology is a spectral space.}  A ring $A$ between $R$ and $F$ is local if whenever $a,b$ are nonzero elements of $R$ with $1/(a+b) \in R$, we have  $1/a \in R$ or $1/b \in R$. Thus the set of all local rings between $R$ and $F$ is given by the patch closed subset
$$X_7 = X_5 \cap \left(\bigcap_{0 \ne a,b \in F}\U(a,b,1/(a+b)) \cup \V(1/a) \cup \V(1/b)\right).$$  As in (5), this implies that $X_7$ is spectral in the Zariski topology. 
\smallskip

(8) {\it  Let $A$ be a subring of a field $F$. The set of all valuation rings containing $A$ and having quotient field $F$ is a spectral space in the Zariski topology.} This has been proved by a number of authors; see \cite{FFL,OZR} for discussion and references regarding this result. 
 A subring $V$ between $A$ and $F$ is a valuation ring with quotient field $F$ if and only if for all $0 \ne q \in F$, $q \in V$ or $q^{-1} \in V$.  
Thus with $R=A$ and $S = F$, we use the set $X_5$ from (5) to obtain the set of valuation rings of $F$ containing $A$ as the patch closed subset of ${\mathbf{2}}^F$ given by 
$$X_8 = X_5 \cap \bigcap_{0 \ne q \in F}\V(q) \cup \V(q^{-1}).$$ As in (5), this implies that $X_8$ is spectral in the Zariski topology. 

\end{example}

For the remainder of the section we focus on Noetherian spectral spaces, since these play a central role in later sections. 
 A topological space is {\it Noetherian} if its open sets satisfy the ascending chain condition. 
 Rush and Wallace \cite[Proposition 1.1 and Corollary 1.3]{RW}  have shown that a collection $Y$ of prime ideals of a ring $R$ is a Noetherian subspace of $\Spec R$ if and only if for each prime ideal $P$ of $R$, there is a finitely generated ideal $I \subseteq P$ such that every prime ideal in $Y$ containing $I$ contains also $P$. Since every spectral space can be realized as $\Spec R$ for some ring $R$, we may restate this topologically in the following form.   
  
  \begin{lemma} {\em (Rush and Wallace)}  \label{RW lemma} Let $X$ be a spectral space, and let $Y$ be a subspace of $X$. Then $Y$ is Noetherian  if and only if for each 
  irreducible closed subset $C$ of $X$,  $Y \cap C  = Y \cap C'$ for some closed subset $C' \supseteq C$  such that  $X \setminus C'$ is quasicompact. 
\end{lemma}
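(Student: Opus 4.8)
The plan is to reduce the statement to the original Rush--Wallace criterion by way of Hochster's realization theorem \cite[Corollary, p.~45]{Hoc}. Since $X$ is spectral, I would fix a homeomorphism $X \cong \Spec R$ and transport the subspace $Y$ across it. Homeomorphisms preserve the Noetherian property, irreducible closed subsets, and quasicompact open subsets, so it suffices to show that, inside $\Spec R$, the displayed topological condition is literally a rewriting of the ring-theoretic condition quoted from \cite{RW}. No new content is needed beyond this translation together with the facts recorded just before the lemma.

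First I would set up the standard dictionary between the topology of $\Spec R$ and the algebra of $R$. Because a spectral space has a unique generic point for every irreducible closed set, each irreducible closed $C \subseteq \Spec R$ is $C = \overline{\{P\}} = V(P)$ for a unique prime $P$, where $V(P) = \{Q : Q \supseteq P\}$; this bijection between irreducible closed subsets and points makes the two ``for each'' quantifiers in the two formulations correspond. Next I would record the key topological shadow of finite generation: the quasicompact open subsets of $\Spec R$ are exactly the sets $D(I) = \Spec R \setminus V(I)$ with $I$ finitely generated, since a quasicompact open set is a finite union $D(f_1) \cup \dots \cup D(f_n) = D((f_1,\dots,f_n))$ and conversely any such $D(I)$ is quasicompact. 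Hence the closed sets $C'$ with quasicompact complement are precisely the $V(I)$ with $I$ finitely generated.

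With the dictionary in hand the translation is routine. Given $C = V(P)$, prescribing a finitely generated ideal $I \subseteq P$ is the same as prescribing a closed set $C' = V(I)$ with quasicompact complement and $C \subseteq C'$, because $I \subseteq P$ is equivalent to $P \in V(I)$, i.e.\ to $V(P) \subseteq V(I)$. Moreover the clause ``every prime of $Y$ containing $I$ also contains $P$'' says exactly $Y \cap V(I) \subseteq Y \cap V(P)$, that is, $Y \cap C' \subseteq Y \cap C$; since $C \subseteq C'$ already forces the reverse inclusion, this is equivalent to $Y \cap C = Y \cap C'$. Reading this equivalence in both directions turns the existence of a suitable $I \subseteq P$ into the existence of a suitable $C' \supseteq C$, so the Rush--Wallace criterion becomes the one displayed in the lemma.

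I expect the only point requiring care to be the identification of ``quasicompact open complement'' with ``finitely generated defining ideal,'' which is precisely what upgrades the two statements from merely analogous to genuinely equivalent; everything else is bookkeeping with the order-reversing map $I \mapsto V(I)$ and with the existence of enough generic points to index all irreducible closed sets. There is no delicate construction or estimate involved, the single substantial input being Hochster's theorem, which is already available.
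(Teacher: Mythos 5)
Your proposal is correct and is essentially the paper's own argument: the paper likewise invokes Hochster's realization theorem and treats the lemma as a direct topological restatement of the Rush--Wallace criterion, with the dictionary ``irreducible closed set $\leftrightarrow$ generic prime'' and ``closed set with quasicompact complement $\leftrightarrow$ $V(I)$ with $I$ finitely generated.'' You have simply written out the bookkeeping that the paper leaves implicit.
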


In later sections we focus on spectral spaces $X$ 
 in which the set of maximal elements under the specialization order $\leq $ of $X$ is a Noetherian space. The spectral spaces in our applications have  the additional property that $(X,\leq)$ is a tree. In this case, as we show in Theorem~\ref{Noetherian prop},  the Noetherian property for the maximal elements descends to subsets consisting of incomparable elements.

\begin{lemma} \label{technical Noetherian} Let $X$ be a spectral space whose specialization order $\leq$ is a tree. Suppose that
 $\Max X$ is a Noetherian space.  
Then a  subspace $Y$ of $X$ is Noetherian if and only if $(Y,\leq)$ satisfies the ascending chain condition.
\end{lemma}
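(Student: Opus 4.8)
The plan is to prove the two implications separately, the forward one by hand and the reverse one via Lemma~\ref{RW lemma}. Throughout I use that in a spectral space $\overline{\{x\}}=\up\{x\}$, so the closed subsets of $X$ are exactly the patch-closed up-sets; consequently a closed subset $Z$ of the subspace $Y$ has the form $Z=\overline{Z}^{\,X}\cap Y$ with $\overline{Z}^{\,X}$ an up-set, and is therefore an up-set of $(Y,\le)$. For the forward implication, suppose $Y$ is Noetherian and let $y_1<y_2<\cdots$ be a strictly ascending chain in $(Y,\le)$. Then the sets $\overline{\{y_i\}}\cap Y=\up\{y_i\}\cap Y$ form a strictly descending chain of closed subsets of $Y$ (strict because $y_i$ lies in the $i$-th set but not the next, as $y_i\not\ge y_{i+1}$), contradicting the Noetherian hypothesis. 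Thus $(Y,\le)$ satisfies the ascending chain condition, and this direction uses neither the tree hypothesis nor the hypothesis on $\Max X$.

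For the reverse implication I assume ACC on $(Y,\le)$ and verify the criterion of Lemma~\ref{RW lemma}. Fix an irreducible closed $C$ with generic point $c$, so $C=\up\{c\}$; equivalently to producing the set $C'$ of the lemma, writing $W=X\setminus C'$, I must find a quasicompact open $W$ with $Y\setminus C\subseteq W\subseteq X\setminus C$. Since $\Max X$ is Noetherian, Lemma~\ref{RW lemma} applied to the subspace $\Max X$ furnishes a quasicompact open $W''\subseteq X\setminus C$ with $\Max X\cap C''=\Max X\cap C$, where $C''=X\setminus W''$. I claim the only points of $Y\cap C''$ lying outside $C$ are those $<c$. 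Indeed, if $y\in Y\cap C''$, choose a $\le$-maximal $m\ge y$ (such $m$ exists by applying Proposition~\ref{Hoc prop}(4)(b) to the inverse topology, whose minimal points are the $\le$-maximal points of $X$); then $m\in C''$, so $m\in\Max X\cap C''=\Max X\cap C$, whence $m\ge c$. As $\down\{m\}$ is a chain containing both $y$ and $c$, these are comparable, and $y\not\ge c$ forces $y<c$. In particular every $Y$-point incomparable to $c$ already lies in $W''$.

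It remains to absorb the leftover points, which all lie on the chain $\{y\in Y:y<c\}\subseteq\down\{c\}$. If this set is empty, take $W=W''$. Otherwise, being a chain satisfying ACC it has a greatest element $y^*<c$; choose any basic quasicompact open $W_0$ with $y^*\in W_0\subseteq X\setminus C$ (possible since the quasicompact opens form a basis and $y^*$ lies in the open set $X\setminus C$). As open sets are down-sets, $\down\{y^*\}\subseteq W_0$, so $W_0$ contains every $Y$-point $<c$. Then $W=W''\cup W_0$ is quasicompact open, satisfies $W\subseteq X\setminus C$, and contains all of $Y\setminus C$ (the points incomparable to $c$ via $W''$, those below $c$ via $W_0$). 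Hence $C'=X\setminus W$ gives $Y\cap C'=Y\cap C$ with $C'\supseteq C$ and $X\setminus C'$ quasicompact, so by Lemma~\ref{RW lemma} the space $Y$ is Noetherian.

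The main obstacle is the downward discrepancy created by reducing to $\Max X$: the set $C''$ one gets there may contain spurious points of $Y$ below $c$, so it cannot serve directly as $C'$. Resolving this is precisely where both special hypotheses enter. The tree property does double duty — it keeps all points incomparable to $c$ out of $C''$ and simultaneously confines the spurious points to the single chain below $c$ — while ACC supplies the top element $y^*$ of that chain, so that one basic quasicompact open $W_0$ swallows the entire offending segment. I expect the only point needing care in the write-up is the verification that open sets are down-sets and that the two quasicompact opens combine to cover $Y\setminus C$ exactly; the order-theoretic heart of the argument is the comparability forced by $m\ge y,c$ in the chain $\down\{m\}$.
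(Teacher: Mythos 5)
Your proof is correct and follows essentially the same route as the paper's: the reverse direction reduces to the Rush--Wallace criterion, applies it first to the Noetherian space $\Max X$, uses the tree property (comparability within $\down\{m\}$ for a maximal $m$ above both $y$ and $c$) to confine the leftover points of $Y$ to the chain below $c$, and uses ACC to cap that chain so a single basic quasicompact open absorbs it. The paper packages your $y^*$ and $W_0$ as the set $D=\bigcap_{y<c,\,y\in Y}\overline{\{y\}}$ and a closed set $C_2$ with $D\not\subseteq C_2$, but the construction and the verification are the same.
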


\begin{proof}
 If $Y$ is Noetherian, then the closed subsets of $Y$ satisfy the descending chain condition, so $(Y,\leq)$ satisfies the ascending chain condition. Conversely, 
 suppose that $(Y,\leq)$ satisfies ACC. Let  $C$ be an irreducible closed subset of $X$. By Lemma~\ref{RW lemma}, to prove that $Y$ is Noetherian, it suffices to show that there exists a closed subset $C' \supseteq C$  such that $Y \cap C = Y \cap C'$ and $X \setminus C'$ is quasicompact.  By Lemma~\ref{RW lemma}, there exists a closed subset $C_1 \supseteq C$  such that $C \cap \Max X = C_1 \cap \Max X$ and $X \setminus C_1$ is quasicompact.  
  Since $C$ is irreducible, there is $c \in C$ such that $C = \{x \in X:c \leq x\}$. 
 Let $D = \bigcap_{y < c,y \in Y}\overline{\{y\}}$.  Since 
 $(X,\leq)$ is a tree and $(Y,\leq)$ satisfies ACC, $C$ is a proper subset of $D$. 
 Thus since the quasicompact open subsets of $X$ form a basis for $X$, there is a closed set $C_2$ such that $C \subseteq C_2$, $D \not \subseteq C_2$ and $X \setminus C_2$ is quasicompact. 
We claim that $Y \cap C = Y \cap C_1 \cap C_2$.  
 The containment ``$\subseteq$'' is clear since $C \subseteq C_1 \cap C_2$. Let $y \in Y \cap C_1 \cap C_2$. Then there exists $m \in \Max X$ such that $y \leq m$.  Thus $m \in \Max X \cap C_1 \cap C_2 \subseteq C$, so that $c \leq m$. Since $(X,\leq)$ is a tree and $y,c\leq m$, it must be that $y < c$ or $c \leq y$. If $y< c$, then $C \subsetneq D \subseteq \overline{\{y\}}$. However, since $y \in C_2$, this forces $D \subseteq C_2$, a contradiction. Thus $c \leq y$, and hence $y \in C$. This shows that $Y \cap C = Y \cap C_1 \cap C_2$. Finally, since $X\setminus C_1$ and $X \setminus C_2$ are quasicompact, so is their union $X \setminus (C_1 \cap C_2)$. Thus with $C' = C_1 \cap C_2$  the claim is proved. 
 \qed\end{proof}

\begin{theorem} \label{Noetherian prop} Let $X$ be a spectral space whose specialization order $\leq$ is a tree. Then  $\Max X $ is a Noetherian space in the spectral topology if and only if every
   subset of $X$ consisting of elements that are incomparable under $\leq$ is discrete in the inverse   topology.


\end{theorem}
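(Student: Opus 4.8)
The plan is to prove the two implications separately, using throughout that the specialization closure of a point is the up-set $\overline{\{c\}} = \up\{c\}$, so that the irreducible closed subsets of $X$ are exactly the sets $\up\{c\}$, $c \in X$.

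\emph{Forward implication.} Assume $\Max X$ is Noetherian and let $A$ be a set of pairwise incomparable elements. An antichain contains no chain of length two, so $(A,\leq)$ vacuously satisfies the ascending chain condition; hence by Lemma~\ref{technical Noetherian} (which is where the tree hypothesis enters) the subspace $A$ is Noetherian. Fix $a \in A$. Applying Lemma~\ref{RW lemma} to the irreducible closed set $C = \up\{a\}$ produces a closed set $C' \supseteq C$ with $X \setminus C'$ quasicompact and $A \cap C' = A \cap C$. Since $A$ is an antichain, $A \cap \up\{a\} = \{a\}$, so $A \cap C' = \{a\}$. Now $K := X \setminus C'$ is quasicompact and open in the spectral topology, i.e.\ a basic inverse-closed set, so $C' = X \setminus K$ is open in the inverse topology; as $a \in C \subseteq C'$ and $C' \cap A = \{a\}$, the point $a$ is isolated in the inverse subspace topology on $A$. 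Since $a$ was arbitrary, $A$ is inverse-discrete.

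\emph{Converse (contrapositive).} Suppose $\Max X$ is not Noetherian. By Lemma~\ref{RW lemma} there is $c \in X$ such that, setting $T = \{m \in \Max X : m \not\geq c\}$, no quasicompact spectral-open set $K \subseteq X \setminus \up\{c\}$ contains $T$. Equivalently, every patch-closed set $C' = X \setminus K$, with $K$ quasicompact open and $K \subseteq X \setminus \up\{c\}$, meets $T$, so $\widetilde{T} \cap C' \neq \emptyset$. These $C'$ are directed downward under inclusion, since the union of two such $K$ is again quasicompact, open and contained in $X \setminus \up\{c\}$, and $\bigcap_{C'} C' = \up\{c\}$ because $X \setminus \up\{c\}$ is the union of the quasicompact open sets it contains. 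By the patch-compactness in Proposition~\ref{Hoc prop}(2), the family $\{\widetilde{T} \cap C'\}$ of nonempty patch-closed sets has nonempty intersection, producing a point $z \in \widetilde{T} \cap \up\{c\}$. Since $z \geq c$ whereas each element of $T$ is maximal and not $\geq c$, we have $z \notin T$, and $z$ is incomparable with every $t \in T$: maximality of $t$ forces $t = z$ from $t \leq z$ (impossible as $z \geq c$ but $t \not\geq c$), while $z \leq t$ would give $t \geq c$ (again impossible). Thus $A := T \cup \{z\}$ is an antichain. It is not inverse-discrete: every inverse-open neighborhood of $z$ is patch-open, because the patch topology refines the inverse topology, and $z \in \widetilde{T}$ forces every patch-open neighborhood of $z$ to meet $T = A \setminus \{z\}$; hence no inverse-open set isolates $z$ in $A$.

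\emph{Main obstacle.} The substance lies in the converse, specifically in manufacturing the accumulation point $z$. The failure of the Noetherian property only asserts that $T$ cannot be captured by a single quasicompact open set avoiding $\up\{c\}$, and converting this into an actual patch-limit of $T$ lying in $\up\{c\}$ is precisely what the finite-intersection/patch-compactness argument accomplishes. The supporting facts—that inverse-open sets are patch-open and that $T \cup \{z\}$ is genuinely an antichain—are routine once $z$ is in hand; note also that the tree hypothesis is used only in the forward direction, through Lemma~\ref{technical Noetherian}.
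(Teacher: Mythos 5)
Your proof is correct. The forward implication is essentially the paper's argument: both reduce to Lemma~\ref{technical Noetherian} to see that an antichain $A$ is a Noetherian subspace (the only place the tree hypothesis enters, as you note), and then isolate each $a \in A$ by the complement of a quasicompact open set; you route this last step through Lemma~\ref{RW lemma} applied to $\up\{a\}$, while the paper observes directly that $A \setminus \{a\}$ is a quasicompact open, hence inverse-closed, subset of the Noetherian space $A$ --- a cosmetic difference. The converse is genuinely different. The paper argues directly: for $C = \up\{c\}$ irreducible closed, the set $\{c\} \cup (\Max X \setminus C)$ is an antichain, so the discreteness hypothesis yields a quasicompact open $U$ containing $\Max X \setminus C$ with $c \notin U$; since $U = \down U$ forces $U \cap C = \emptyset$, the set $C' = X \setminus U$ witnesses the condition of Lemma~\ref{RW lemma}. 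You instead argue by contrapositive and must manufacture an actual witness to non-discreteness: patch-compactness (Proposition~\ref{Hoc prop}(2)) applied to the downward-directed family $\widetilde{T} \cap C'$ produces a patch-accumulation point $z$ of $T = \Max X \setminus \up\{c\}$ lying in $\up\{c\}$, and $T \cup \{z\}$ is then a non-discrete antichain. Both converses are sound and neither uses the tree hypothesis. The paper's is shorter and avoids compactness entirely; yours pays for a compactness argument but delivers something slightly more concrete --- an explicit antichain together with an inverse-topology accumulation point --- rather than a bare contradiction with the Rush--Wallace criterion.
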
 

\begin{proof}  Suppose $\Max X$ is a Noetherian space. 
 Let $Y$ be a nonempty subset of $X$ whose elements are incomparable under $\leq$.  
   Then  by Lemma~\ref{technical Noetherian}, $Y$ is a Noetherian space. Let $y \in Y$.
   Since the elements of $Y$ are incomparable,   $Y \setminus \{y\}$ is open in $Y$. In a Noetherian space, open sets are quasicompact, so $Y \setminus \{y\}$ is 
   inverse closed in $Y$, which proves that $y$ is isolated in the inverse topology on $Y$.
 
 

Conversely, suppose that every subspace of $X$ consisting of incomparable elements   is discrete in the inverse  topology. 
To prove that $\Max X$ is Noetherian, it suffices by Lemma~\ref{RW lemma} to show that for each irreducible closed subset $C$ of $X$ there exists a closed set $C' \supseteq C$ such that $C \cap \Max X = C' \cap \Max X$ and $X \setminus C'$ is quasicompact. Let $C$ be an irreducible closed subset of $X$,
%
and let $c \in C$ such that $C = \{x \in X:c \leq x\}$. 
By assumption, $\{c\} \cup ((\Max X) \setminus C)$ is discrete in the inverse topology since the elements in this set are incomparable. Thus there exists a quasicompact open subset $U$ of $X$ such that $(\Max X) \setminus C \subseteq U$ and $c \not \in U$. 
Since $c \not \in U$ and $U$, being open, has the property that $U = \down U$, it must be that $C \cap U = \emptyset$. 
Thus $U \subseteq X \setminus C$, so that $(\Max X) \cap U \subseteq (\Max X) \setminus C$. Since also $(\Max X) \setminus C \subseteq U$, we conclude  $(\Max X) \setminus C = (\Max X) \cap U$.  
 Thus with  $C' = X \setminus U$, we have    $(\Max X) \cap C = (\Max X) \cap C'$ and $X \setminus C'$ is quasicompact. 
Since also $C \subseteq X \setminus U = C'$,  the claim is proved. 
\qed\end{proof}

\begin{corollary} \label{Noetherian min}  Let $X$ be a spectral space whose specialization order $\leq$ is a tree. If  $\Max X $ is a Noetherian space, then $\min C$ is finite for each nonempty closed subset $C$ of $X$.
\end{corollary}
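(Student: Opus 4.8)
The plan is to show that $\min C$ is simultaneously discrete and quasicompact in the inverse topology; since a discrete quasicompact space is finite, this yields the result. Discreteness is essentially already in hand: the elements of $\min C$ are pairwise incomparable under $\leq$ (two comparable minimal elements of $C$ must coincide), so Theorem~\ref{Noetherian prop} applies and shows that $\min C$ is discrete in the inverse topology. The real work is to establish inverse-quasicompactness.

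First I would observe that $C$ itself is quasicompact in the inverse topology. Being closed in the spectral topology, $C$ is closed in the finer patch topology, and since $X$ is patch-compact by Proposition~\ref{Hoc prop}(2), $C$ is patch-quasicompact; as the inverse topology is coarser than the patch topology, $C$ is inverse-quasicompact as well. Next I would upgrade an inverse-open cover of $\min C$ to a cover of all of $C$. The key observation is that every inverse-open set $U$ is upward closed under $\leq$: since the specialization order of the inverse topology is the reverse of $\leq$ by Proposition~\ref{Hoc prop}(1), and open sets are downward closed under their own specialization order, $m \in U$ together with $m \leq y$ forces $y \in U$. Now $C$, being spectral-closed, is a patch, so by Proposition~\ref{Hoc prop}(4)(b) every $y \in C$ satisfies $m \leq y$ for some $m \in \min C$. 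Hence any inverse-open cover $\{U_i\}$ of $\min C$ already covers $C$: given $y \in C$, choose $m \in \min C$ with $m \leq y$ and some $U_i \ni m$, and then $y \in U_i$ by upward closure. Inverse-quasicompactness of $C$ now yields a finite subcover, which in particular covers $\min C$, so $\min C$ is inverse-quasicompact.

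Combining the two properties, $\min C$ is a quasicompact discrete space and therefore finite. The only delicate points are the two structural facts driving the covering argument: that inverse-open sets are upward closed under $\leq$, and that every point of $C$ dominates a minimal point of $C$. The latter is exactly Proposition~\ref{Hoc prop}(4)(b) and relies on $C$ being a patch, which holds because the patch topology refines the spectral one. I expect no genuine obstacle here; the tree hypothesis enters only indirectly, through its use in Theorem~\ref{Noetherian prop} to secure the discreteness of $\min C$.
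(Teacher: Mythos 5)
Your proof is correct, and its skeleton is the same as the paper's: establish that $\min C$ is both discrete and quasicompact in the inverse topology (discreteness coming from Theorem~\ref{Noetherian prop} applied to the antichain $\min C$), and conclude finiteness. The one place you diverge is in how quasicompactness is obtained. The paper's proof is a two-line citation: by Proposition~\ref{Hoc prop}(1), $\min C$ is exactly the set of maximal points of the spectral space $C$ equipped with the inverse topology, and the set of maximal points of a spectral space is always a quasicompact subspace. You instead prove this fact directly in the situation at hand: $C$ is a patch, hence inverse-quasicompact; inverse-open sets are up-sets for $\leq$; and every point of the patch $C$ dominates a point of $\min C$ by Proposition~\ref{Hoc prop}(4)(b), so any inverse-open cover of $\min C$ already covers $C$ and admits a finite subcover. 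Your covering argument is sound (the upward-closure claim is exactly the dual of the standard fact that open sets are down-sets for the specialization order), and it has the merit of being self-contained, at the cost of a few extra lines; the paper's version simply outsources the quasicompactness to a standard property of spectral spaces. Either route is acceptable.
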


\begin{proof}
Let $C$ be a nonempty closed subset of $X$. By Proposition~\ref{Hoc prop}(1),  $\min C$  consists of the elements that are maximal with respect to the specialization order in the inverse topology. The set of maximal elements of a spectral space is a quasicompact subspace, so 
$\min C$ is quasicompact in the inverse topology. Thus since by Theorem~\ref{Noetherian prop}, $\min C$ is discrete in the inverse topology, $\min C$ is finite. \qed
\end{proof}

\section{Spectral representations}

Throughout this section $A,C$ and $D$ are nonempty sets with $A \subsetneq C \subseteq D$. We do not assume the presence of any algebraic structure on these sets. We work under the assumption that $A$ can be represented as an intersection of $C$ with sets $B$ between $A$ and $D$ such that the sets $B$ are points in a spectral space $X$  whose specialization order is  compatible with  set inclusion. The set $C$ can be viewed as a fixed component in the intersection (e.g., the case $C = D$ is often an interesting choice). The goal then is to make ``efficient'' choices from $X$ to represent $A$.  
We formalize some of this with the following definition.  

\begin{definition} \label{long def}
Let   $X$ be a collection of subsets of $D$, and assume that $X$ is a {\it $C$-representation of $A$}, 
meaning that $A =  (\bigcap_{B \in X}B) \cap C$.     
\begin{description}[(1)]

\item[{(1)}] 
For each $F \subseteq D$, let $\V(F) = \{B \in X:F \subseteq B\}$ and $\U(F) = \{B \in X:F \not \subseteq  B\}$.  We say the $C$-representation   $X$ is   {\it spectral}  if $X$ is a spectral space and $\{\U(d):d \in D\}$ is a subbasis for $X$ consisting of quasicompact open sets.  Note that this choice of subbasis assures that the specialization order agrees with the partial order given by set inclusion. 
\end{description}
Now assume that $X$ is a spectral $C$-representation of $A$. 
\begin{description}[(1)]
\item[(2)] Let $Z \subseteq X$ be a $C$-representation of $A$, and let  $B \in Z$.   
Then $B$ is {\it irredundant} in $Z$ if  $Z \setminus \{B\}$ is not a $C$-representation of $A$; 
$B$ is 
 {\it strongly irredundant} in $Z$  if  the only  closed subset $Y$ of $ \V(B)$ such that $(Z \setminus \{B\})\cup Y  $ is a $C$-representation of $A$ is  $Y = \V(B)$;
 %
$B$   is {\it tightly irredundant} in $Z$ if 
$(Z \cup \V(B)) \setminus \{B\}$ is not a $C$-representation of $A$.  




\item[(3)]   A  closed subset $Y$ of $X$ (resp., a patch) that is minimal with respect to set inclusion among closed (resp., patch) $C$-representations of $A$ in $X$ is 
a
{\it minimal closed (resp., patch) $C$-representation  of $A$ in $X$}.

\item[(4)]  A subspace  of $X$ of the form $\min Y$ for some minimal closed $C$-representation $Y$ of $A$  is 
 a {\it minimal $C$-representation of $A$}.

\end{description}  
\end{definition}

The notions of strong and tight irredundance become much clearer in the settings of Sections 4-7; see the discussion after Definition 4.1.

Observe that in (4), since the specialization order on $X$ agrees with the partial order given by set inclusion among the members of $X$, $\min Y$ is also the minimal elements of $Y$ with respect to set inclusion.

\begin{lemma}  \label{minimal exist} Every spectral $C$-representation  of $A$ contains a minimal closed $C$-representation of $A$ and a minimal patch $C$-representation of $A$.  
\end{lemma}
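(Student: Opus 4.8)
The plan is to obtain both minimal representations by a single application of Zorn's Lemma, exploiting the compactness of the patch topology guaranteed by Proposition~\ref{Hoc prop}(2). Fix a spectral $C$-representation $X$ of $A$, and order the family $\mathcal{F}$ of closed $C$-representations of $A$ contained in $X$ by inclusion. This family is nonempty, since $X$ itself (being closed, and a $C$-representation by hypothesis) belongs to it. To produce a minimal element it suffices to show that every chain in $\mathcal{F}$ has a lower bound in $\mathcal{F}$; the natural candidate for a chain $\{Y_\alpha\}$ is the intersection $Y = \bigcap_\alpha Y_\alpha$, which is automatically closed. The entire content of the argument is to verify that $Y$ is still a $C$-representation of $A$.

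One inclusion is immediate: since $Y \subseteq Y_\alpha$, intersecting over the smaller index set gives $\bigcap_{B \in Y}B \supseteq \bigcap_{B \in Y_\alpha}B$, so $(\bigcap_{B \in Y}B) \cap C \supseteq A$. For the reverse inclusion I would argue by contrapositive, showing that each $d \in C \setminus A$ is excluded by some member of $Y$, i.e.\ $Y \cap \U(d) \ne \emptyset$. Since each $Y_\alpha$ is a $C$-representation and $d \in C \setminus A$, there is some $B \in Y_\alpha$ with $d \notin B$, so $Y_\alpha \cap \U(d) \ne \emptyset$ for every $\alpha$.

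This is where compactness enters, and it is the crux of the proof. The set $\U(d)$ is quasicompact and open, hence clopen in the patch topology, and each $Y_\alpha$, being closed in the spectral topology, is closed in the finer patch topology. Thus $\{Y_\alpha \cap \U(d)\}$ is a chain of nonempty patch-closed subsets of the patch-compact space $X$, and the finite intersection property forces $\bigcap_\alpha (Y_\alpha \cap \U(d)) = Y \cap \U(d)$ to be nonempty. This yields $(\bigcap_{B \in Y}B) \cap C \subseteq A$, so $Y \in \mathcal{F}$ is the required lower bound, and Zorn's Lemma produces a minimal closed $C$-representation of $A$.

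The patch case is handled verbatim: replacing ``closed'' by ``patch'' throughout, the intersection $Y$ of a chain of patches is again a patch, and the identical finite-intersection-property argument---using only that each patch is patch-closed and that $\U(d)$ is patch-clopen---shows $Y$ remains a $C$-representation. I anticipate no separate obstacle here, since the compactness step never used anything about the $Y_\alpha$ beyond patch-closedness, a property enjoyed equally by spectral-closed sets and by patches. The only point demanding care is the verification that the chain-intersection stays a representation, which is exactly the place where the hypothesis that the $\U(d)$ are quasicompact (and hence patch-clopen) is indispensable.
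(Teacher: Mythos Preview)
Your proof is correct and follows essentially the same approach as the paper: Zorn's Lemma on the family of closed $C$-representations, with the only substantive step being that the intersection of a chain remains a representation. The paper phrases this step as a covering argument directly in the spectral topology (if $Y \subseteq \V(d)$ then $\U(d) \subseteq \bigcup_\alpha Y_\alpha^c$, and quasicompactness of $\U(d)$ plus the chain condition gives a single $\alpha$), whereas you phrase it dually as a finite-intersection-property argument in the patch topology; these are equivalent. For the patch case the paper simply observes that the patch topology is itself spectral and reapplies the first part, which is slightly more economical than repeating the argument, but your version is equally valid.
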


\begin{proof} 
Let $X$ be a spectral $C$-representation  of $A$, and let ${\mathcal{F}}$ be the set of closed $C$-representations of $A$  in $X$. 
 Then ${\cal{F}}$ is nonempty since $X \in {\cal{F}}$.  Let $\{Y_\alpha\}$ be a chain of elements in ${\cal F}$, and let $Y = \bigcap_{\alpha} Y_\alpha$. As an intersection of 
closed subsets, $Y$ is  closed. We claim that $Y$ is a $C$-representation of $A$.   Clearly, $A \subseteq (\bigcap_{B \in Y}B) \cap C$.  Let $d \in (\bigcap_{B \in Y}B) \cap C$.  Then $\bigcap_{\alpha} Y_\alpha = Y \subseteq \V(d)$, and hence $\U(d) \subseteq \bigcup_{\alpha}Y^c_\alpha$, where $Y^c_\alpha = X \setminus Y_\alpha$.  Since $\U(d)$ is quasicompact and each  $Y^c_\alpha$ is open, the fact that the $Y_\alpha$ form a chain under inclusion implies  that $\U(d) \subseteq Y^c_\alpha$ for some $\alpha$. For this choice of $\alpha$, $ Y_\alpha \subseteq \V(d)$, which since $Y_\alpha$ is a $C$-representation of $A$ implies that $d \in (\bigcap_{B \in Y_\alpha}B) \cap C= A$.  
  Therefore, $A =
(\bigcap_{B \in Y}B) \cap C$, which shows that $Y \in {\cal F}$.   
By  Zorn's Lemma, ${\mathcal{F}}$ contains minimal elements.  Since the patch topology is spectral by Proposition~\ref{Hoc prop}(2), the final statement follows from the first.   
\qed \end{proof}

\begin{lemma}\label{ti} \label{tfae} Let $X$ be a spectral  $C$-representation of $A$, let $Z \subseteq X$ be a $C$-representation of $A$ and let $B \in Z$.   
 Then
\begin{description}[(1)]
\item[{\em (1)}] $B$ 
 is irredundant in $Z$ if and only if $B$ is irredundant in $\widetilde{Z}$.

\item[{\em (2)}] 
 $B$ is tightly irredundant in $Z$ if and only if $B$ is irredundant in $\overline{Z}$.
\end{description}  
 Moreover, if  $B$  is irredundant in $Z$, then $B$ is isolated in the spectral and patch subspace topologies on $Z$.

\end{lemma}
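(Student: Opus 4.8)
The plan is to reduce all three assertions to a single combinatorial characterization of irredundance in terms of the subbasic open sets $\U(d)$, and then read off parts (1), (2), and the final clause from it. First I would record the monotonicity observation that any $W$ with $Z \subseteq W \subseteq X$ is again a $C$-representation of $A$: enlarging $W$ only shrinks $\bigcap_{B' \in W} B'$, and this intersection (meet with $C$) is squeezed between the two equal values coming from $Z$ and from $X$. In particular $\widetilde{Z}$ and $\overline{Z}$ are $C$-representations, so the statements make sense. The core fact I would prove is: for a $C$-representation $W \subseteq X$ and $B \in W$, the point $B$ is irredundant in $W$ if and only if there is some $d \in C \setminus A$ with $W \cap \U(d) = \{B\}$. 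To see this, unwind the definition: irredundance means $A \subsetneq (\bigcap_{B' \in W \setminus \{B\}} B') \cap C$, so choose $d$ in the right-hand side but not in $A$; then $d \in C$, $d$ lies in every member of $W$ except possibly $B$, and since $d \notin A = (\bigcap_{B' \in W} B') \cap C$ while $d \in C$, necessarily $d \notin B$. This is exactly $W \cap \U(d) = \{B\}$, and the converse is immediate.

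Given this characterization, the final clause and part (1) are short. For the final clause: if $B$ is irredundant in $Z$, pick $d$ with $Z \cap \U(d) = \{B\}$; since $\U(d)$ is open in the spectral topology, $\{B\}$ is open in the spectral subspace topology on $Z$, so $B$ is isolated there, and because the patch topology refines the spectral topology (Proposition~\ref{Hoc prop}), $\U(d)$ is also patch-open, so $\{B\}$ is patch-open in $Z$ and $B$ is isolated in that topology too. For part (1) the only substantive direction is that irredundance in $Z$ forces irredundance in $\widetilde{Z}$: from $Z \cap \U(d) = \{B\}$ we have $Z \subseteq \V(d) \cup \{B\}$, and $\V(d) \cup \{B\}$ is patch-closed since $\V(d) = X \setminus \U(d)$ is patch-closed and $\{B\}$ is closed in the Hausdorff patch topology (Proposition~\ref{Hoc prop}); hence $\widetilde{Z} \subseteq \V(d) \cup \{B\}$, i.e. $\widetilde{Z} \cap \U(d) = \{B\}$, with the same $d \in C \setminus A$. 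The reverse implication uses only $Z \subseteq \widetilde{Z}$.

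For part (2) I would first note that $\V(B) = \overline{\{B\}}$ is the spectral closure of the point $B$, and that every spectral-closed set is upward closed under $\leq$; since $B \in Z \subseteq \overline{Z}$ this gives $\V(B) \subseteq \overline{Z}$ and hence $Z \cup \V(B) \subseteq \overline{Z}$. Tight irredundance of $B$ in $Z$ says precisely that $B$ is irredundant in $Z \cup \V(B)$, so by the characterization it is equivalent to $(Z \cup \V(B)) \cap \U(d) = \{B\}$ for some $d \in C \setminus A$. One direction is immediate from $Z \cup \V(B) \subseteq \overline{Z}$. For the converse I would upgrade the containment $Z \cup \V(B) \subseteq \V(d) \cup \{B\}$ to $\overline{Z} \subseteq \V(d) \cup \{B\}$ using $\overline{Z} = \up(\widetilde{Z})$ from Proposition~\ref{Hoc prop}: any $B'' \in \overline{Z}$ satisfies $B'' \geq B'$ for some $B' \in \widetilde{Z} \subseteq \V(d) \cup \{B\}$ (the patch closure lies there as in part (1)); if $d \in B'$ then $d \in B''$ so $B'' \in \V(d)$, while if $B' = B$ then $B \subseteq B''$ gives $B'' \in \V(B) \subseteq \V(d) \cup \{B\}$. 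In either case $\overline{Z} \cap \U(d) = \{B\}$, so $B$ is irredundant in $\overline{Z}$.

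The main obstacle is this final upgrade in part (2): passing from a containment valid on $Z \cup \V(B)$ (or its patch closure) to the corresponding containment on the full spectral closure $\overline{Z}$. The difficulty is that $\V(d) \cup \{B\}$ is patch-closed but \emph{not} spectral-closed, since its spectral closure would sweep in all of $\V(B)$; one therefore cannot simply take spectral closures, and the argument must instead exploit the specialization structure through $\overline{Z} = \up(\widetilde{Z})$ together with the fact that $\V(B) \setminus \{B\}$ already lies in $\V(d)$. Everything else is bookkeeping around the characterization.
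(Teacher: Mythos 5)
Your proof is correct and follows essentially the same route as the paper's: the translation of irredundance into the existence of $d \in C\setminus A$ with $W \cap \U(d) = \{B\}$, the patch-closedness of $\V(d)\cup\{B\}$ for part (1), and the key containment $\V(B)\setminus\{B\}\subseteq\V(d)$ for part (2). The only divergence is cosmetic: in (2) the paper notes that under the tight-irredundance hypothesis $\V(d)\cup\{B\}=\V(d)\cup\V(B)$ is already closed in the spectral topology and takes closures directly, so the ``obstacle'' you describe at the end does not actually arise; your detour through $\overline{Z}=\up(\widetilde{Z})$ is valid but unnecessary.
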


\begin{proof}
(1) Suppose that $B$ is irredundant in $Z$. Then there exists $d \in D$ such that $d \not \in B$  but $d$ is  in every other set that is in $Z$. 
Thus  $Z \subseteq \V(d) \cup \{B\}$, and since $\V(d)$ and $\{B\}$ are patches in $X$, we have $\widetilde{Z} \subseteq \V(d) \cup \{B\}$. Hence $\widetilde{Z} \setminus \{B\} \subseteq \V(d)$. Since  $d \not \in B$, this implies that $B$ is irredundant in $\widetilde{Z}$.  
 The converse is clear since $Z \subseteq \widetilde{Z}$.

(2)
Suppose that $V$ is tightly irredundant in $Z$. Then there exists $d \in D$ such that $d$ is not in $B$ but $d$ is in every any other  set in $Z \cup \V(B)$. 
 Thus $(Z \cup \V(B)) \setminus \{B\} \subseteq \V(d)$, which implies that $\V(d) \cup \V(B) = \V(d) \cup \{B\}$. 
 Since  $\V(d) \cup \V(B)$ is  closed, we have $\overline{Z} \subseteq \V(d) \cup \V(B) = \V(d) \cup \{B\}$.     Therefore, $\overline{Z} \setminus \{B\} \subseteq \V(d)$, and hence   $d$ is in every set in $\overline{Z} \setminus \{B\}$. Since  $d \not \in  B$, 
we conclude that  $B$ is irredundant in $\overline{Z}$.  Conversely, if $B$ is irredundant in $\overline{Z}$, then since $\V(B) \subseteq \overline{Z}$, it follows that $B$ is tightly irredundant in $Z$.  

It remains to prove the last statement of the lemma. Suppose that $B$ is irredundant in $Z$.
 Let $Z' = Z \setminus \{B\}$. Then there exists $c \in (\bigcap_{B' \in Z'}B') \cap C$ with $c \not \in B$.  Since the set $\V(c)$ 
is closed in the spectral and patch topologies and  this set contains $Z'$ but not $B$, we have that $B$ is an   
isolated point in the spectral and patch subspace topologies on $Z$. 
\qed
\end{proof}

\begin{proposition}  \label{count} Let $X$ be a spectral $C$-representation of $A$. If $Z\subseteq X$ is a tightly irredundant $C$-representation of $A$, then $Z$ is contained in a minimal $C$-representation of $A$. Thus 
the number of minimal $C$-representations of $A$ is greater than the number of  tightly irredundant $C$-representations of $A$. 
\end{proposition}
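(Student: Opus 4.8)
The plan is to prove the first assertion directly and then derive the cardinality comparison by exhibiting an injection from the tightly irredundant $C$-representations into the minimal ones, namely $Z \mapsto \min\overline{Z}$. The whole argument hinges on understanding the spectral closure $\overline{Z}$ of a tightly irredundant representation, so I would first record that, because $A \subseteq B$ for every $B \in X$ (immediate from $A = (\bigcap_{B \in X}B)\cap C$), the closure $\overline{Z}$ of \emph{any} $C$-representation $Z$ is again a closed $C$-representation: enlarging the index set only shrinks the intersection, and it cannot drop below $A$.

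For the first assertion, I would fix a tightly irredundant $Z$ and, running the Zorn's Lemma argument of Lemma~\ref{minimal exist} inside the family of closed $C$-representations contained in $\overline{Z}$, extract a minimal closed $C$-representation $Y \subseteq \overline{Z}$. The goal is $Z \subseteq \min Y$. Given $B \in Z$, tight irredundance together with Lemma~\ref{ti}(2) makes $B$ irredundant in $\overline{Z}$, which supplies a witness $d \in C \setminus A$ with $d \notin B$ and $d \in B'$ for all $B' \in \overline{Z}\setminus\{B\}$. Two short steps then finish it: if $B \notin Y$ then $Y \subseteq \V(d)$ forces $d \in (\bigcap_{B' \in Y}B')\cap C = A$, a contradiction, so $B \in Y$; and if some $B'' \in Y$ satisfied $B'' \subsetneq B$ then $d \in B'' \subseteq B$, again a contradiction, so $B \in \min Y$. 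Hence $Z \subseteq \min Y$, and $\min Y$ is a minimal $C$-representation by Definition~\ref{long def}(4). Since $Z \subseteq Y \subseteq \overline{Z}$ with $Y$ closed, taking closures gives $\overline{Z} \subseteq Y \subseteq \overline{Z}$, so $\overline{Z} = Y$ is itself minimal closed and $\Phi(Z) := \min\overline{Z}$ is a well-defined minimal $C$-representation containing $Z$.

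For the counting statement I would prove $\Phi$ injective, using two facts. First, for any minimal closed $C$-representation $Y$ one has $\overline{\min Y} = Y$: Proposition~\ref{Hoc prop}(4)(b) guarantees $\min Y$ is already a $C$-representation (every point of the patch $Y$ dominates a point of $\min Y$, so the two intersections coincide), whence $\overline{\min Y}$ is a closed $C$-representation inside $Y$ and minimality forces equality. Second, a tightly irredundant $Z$ is recovered from its closure as the set of isolated points of $\overline{Z}$: one has $Z \subseteq \{\text{irredundant points of }\overline{Z}\}$ by Lemma~\ref{ti}(2), each such point is isolated in $\overline{Z}$ by the last clause of Lemma~\ref{ti}, and since $Z$ is dense in $\overline{Z}$ every isolated point of $\overline{Z}$ lies in $Z$; thus $Z = \{\text{isolated points of }\overline{Z}\}$. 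Now if $\Phi(Z_1) = \Phi(Z_2)$, taking closures and using $\overline{\min\overline{Z_i}} = \overline{Z_i}$ gives $\overline{Z_1} = \overline{Z_2}$, and then the isolated-point description gives $Z_1 = Z_2$. An injection of the tightly irredundant representations into the minimal ones yields the asserted comparison (the inequality being ``$\geq$'').

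The step I expect to be the main obstacle is the clean identification $Z = \{\text{isolated points of }\overline{Z}\}$ together with $\overline{Z} = Y$: everything rests on the interplay between a representation and its spectral closure, and on correctly combining the ``irredundant $\Rightarrow$ isolated'' direction of Lemma~\ref{ti} with the density of $Z$ in $\overline{Z}$ in the spectral topology. One must also be careful that the witness $d$ extracted from irredundance genuinely lies in $C\setminus A$ (not merely outside $B$), since it is exactly membership in $C$ that drives the two contradiction steps above.
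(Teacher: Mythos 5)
Your proposal is correct and follows essentially the same route as the paper: both pass to $\overline{Z}$ via Lemma~\ref{ti}(2), extract a minimal (closed) $C$-representation inside $\overline{Z}$ by the Zorn argument of Lemma~\ref{minimal exist}, and use irredundance in $\overline{Z}$ to force $Z$ into it. The only cosmetic difference is in the counting step, where you package the argument as an explicit injection $Z \mapsto \min\overline{Z}$ recovered through isolated points, while the paper argues directly that two distinct tightly irredundant representations cannot lie in the same minimal representation --- the underlying mechanism is identical.
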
  

\begin{proof} By Lemma~\ref{ti}, the members of $Z$ are irredundant in the $C$-represent\-a\-tion $\overline{Z}$ of $A$. 
 By Lemma~\ref{minimal exist}, there exists a minimal $C$-representation $Z_1$ contained in $\overline{Z}$.  Since the members of $Z$ are irredundant in $\overline{Z}$, it must be that $Z \subseteq Z_1$.  To prove the last claim of the proposition,
 it suffices to show that distinct irredundant $C$-representations of $A$ are contained in distinct minimal $C$-representations of $A$. Suppose $Y \subseteq X$ is another tightly irredundant $C$-representation of $A$ with $Y \ne Z$. Then the members of  $Y$ are irredundant in a minimal $C$-representation $Y_1$ of $A$.   
  If $Y_1 = Z_1$, then the members of  $Y$ and $Z$ are irredundant in $Y_1$, which, since $Y$ and $Z$ are $C$-representations of $A$, implies that $Y  = Z$, a contradiction that implies $Y_1 \ne Z_1$.  
\qed
\end{proof} 

Example~\ref{three} shows that neither tightly irredundant nor minimal representations need  be unique.

\begin{lemma}  \label{remove}
Let  $X$ be a spectral $C$-representation  of $A$, and let $Z$ be a minimal  $C$-representation  of $A$ in $X$. Then
$\overline{Z}$ is a minimal  closed $C$-representation of $A$, 
  $\widetilde{Z}$ is  a minimal  patch  $C$-representation of $A$, 
$Z = \min \: \widetilde{Z}  = \min \: \overline{Z}$   and  $\overline{Z} = \up Z$.  
\end{lemma}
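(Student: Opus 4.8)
The plan is to reduce everything to a single identity, $\overline{Z} = \up Z$, together with control of the closed set $Y$ from which $Z$ arises. Recall that $Z = \min Y$ for some minimal closed $C$-representation $Y$. Two preliminary observations set up the argument. First, a monotonicity principle: if $W \subseteq X$ is a $C$-representation of $A$ and $W \subseteq W' \subseteq X$, then $W'$ is again a $C$-representation, because $A \subseteq B$ for every $B \in X$ forces $A = (\bigcap_{B \in W'}B) \cap C$. Second, since the patch topology refines the spectral topology, every spectral-closed set is a patch; in particular $Y$ is a patch, so Proposition~\ref{Hoc prop}(4)(b) applies to it.

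Next I would establish $\up Z = Y$. Since $Y$ is closed and contains $Z$, it contains $\overline{\{z\}} = \up\{z\}$ for each $z \in Z$, so $\up Z \subseteq Y$; conversely Proposition~\ref{Hoc prop}(4)(b) produces, for each $y \in Y$, some $m \in \min Y = Z$ with $m \leq y$, giving $Y \subseteq \up Z$. Thus $\up Z = Y$ is itself closed, and this is the crucial point. It lets me compute the spectral closure of $Z$ by a sandwich: on one hand $\overline{Z} \supseteq \bigcup_{z \in Z}\overline{\{z\}} = \up Z = Y$, and on the other $\overline{Z} \subseteq Y$ because $Z \subseteq Y$ with $Y$ closed. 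Hence $\overline{Z} = \up Z = Y$, which is exactly the assertion $\overline{Z} = \up Z$ and, since $Y$ is a minimal closed $C$-representation, shows $\overline{Z}$ is too; in particular $\min \overline{Z} = \min Y = Z$.

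For the patch statements I would isolate an auxiliary fact: for any patch $P$ one has $\overline{P} = \up P$ (Proposition~\ref{Hoc prop}(3) with $\widetilde{P} = P$) and $\min(\up P) = \min P$, the latter being a short order-theoretic check using that every element of $\up P$ dominates an element of $P$. Applying this with $P = \widetilde{Z}$ and using $\up \widetilde{Z} = \overline{Z} = Y$ yields $\min \widetilde{Z} = \min(\up \widetilde{Z}) = \min Y = Z$, completing the chain $Z = \min \widetilde{Z} = \min \overline{Z}$. To see that $\widetilde{Z}$ is a \emph{minimal} patch $C$-representation, note first that it is a $C$-representation by monotonicity. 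Given any patch $C$-representation $P \subseteq \widetilde{Z}$, I would pass to spectral closures: $\overline{P} \subseteq \overline{\widetilde{Z}} = \overline{Z} = Y$, and $\overline{P}$ is a closed $C$-representation, so minimality of $Y$ forces $\overline{P} = Y$. Then $\min P = \min(\up P) = \min \overline{P} = \min Y = Z$, so $Z \subseteq P$, and taking patch closures gives $\widetilde{Z} \subseteq \widetilde{P} = P \subseteq \widetilde{Z}$, whence $P = \widetilde{Z}$.

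I expect the main obstacle to be the honest justification that $\up Z$ is closed, equivalently $\up Z = Y$. This is exactly what fails for an arbitrary subset, where Proposition~\ref{Hoc prop}(3) only gives $\overline{Z} = \up(\widetilde{Z})$ rather than $\up Z$; it is precisely the hypothesis $Z = \min Y$ with $Y$ closed, fed through Proposition~\ref{Hoc prop}(4)(b), that rescues the sandwich and makes all four conclusions fall out uniformly.
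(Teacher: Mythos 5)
Your proof is correct and follows essentially the same route as the paper's: both rest on Proposition~\ref{Hoc prop}(3) and (4)(b), the minimality of the closed representation $Y$ with $Z=\min Y$, the order-theoretic identity $\min(\up P)=\min P$, and a final passage to patch closures to pin down the minimal patch representation. The only organizational difference is that you establish $\up Z=Y$ first by a purely topological sandwich and deduce $\overline{Z}=\up Z=Y$ from it, whereas the paper obtains $\overline{Z}=Y$ directly from minimality and proves $\overline{Z}=\up Z$ last; this is a harmless reordering that, if anything, makes the first assertion independent of knowing in advance that $\overline{Z}$ is a $C$-representation.
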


\begin{proof} 
Since $Z$ is a minimal $C$-representation, there exists a minimal  closed $C$-representation $Y$ of $A$ such that $Z = \min Y $. 
Since $Z \subseteq Y$ and $Y$ is  closed,  $\overline{Z} \subseteq Y$. Thus the minimality of $Y$ forces $\overline{Z} = Y$, and hence $\overline{Z}$ is a minimal closed $C$-representation of $A$.  Also,  note that this implies that $\min \overline{Z} = \min Y  = Z$.  

 Suppose that $Y$ is  a patch  $C$-representation of $A$ with $Y \subseteq \widetilde{Z}$.  We claim that $Y = \widetilde{Z}$, and we prove this by first showing that $Z \subseteq Y$.  By Proposition~\ref{Hoc prop}(3), $\up Y$ is a closed $C$-representation of $A$. Also by Proposition~\ref{Hoc prop}(3), $\up Y \subseteq \up (\widetilde{Z}) = \overline{Z}$, so that the minimality of $\overline{Z}$ forces $\up Y = \overline{Z}$. 
Since $\overline{Z}$ is closed, $\up (\min \overline{Z}) = \overline{Z} = \up Y$. 
 We have established that $Z = \min \overline{Z}$, so $\up Y = \up Z$.
 Now  $\min \up Y = \min Y$,  and since $Z$ consists of pairwise incomparable elements, $\min \up Z = Z$. Thus $\min Y = Z$, so that $Z \subseteq Y$.  
 Since $Y$ is a patch,  $\widetilde Z \subseteq Y$.  Therefore, $Y = \widetilde{Z}$.  
Since  $\min Y = Z$,   we conclude from $\widetilde{Z} = Y$ that
 $\min \widetilde{Z} = \min Y = Z$.  
 All that remains is to show that $\overline{Z} = \up Z$. Since $\min \overline{Z} = Z$, 
it follows that  $\overline{Z}  = \up \min \overline{Z} = \up Z$.  
\qed
\end{proof}

We obtain now a topological characterization of irredundance in minimal $C$-representations. Example~\ref{need to add} shows that without the restriction to minimal  representations, irredundance may not have a similar  topological expression. 

\begin{theorem} \label{surprised}   Let  $X$ be  a spectral $C$-representation of $A$, and let $Z$ be a minimal $C$-representation of $A$ in $X$.  
 Then the spectral and patch subspace topologies agree on $Z$, and the following are equivalent for $B \in Z.$ 
\begin{description}[(3)]
\item[{\em (1)}] $B$ is irredundant in $Z$. 

\item[{\em (2)}] 
$B$ is strongly irredundant in $Z$.

\item[{\em (3)}] 
$B$ is isolated in the spectral (equivalently, patch) subspace topology on $Z$

\end{description}
\end{theorem}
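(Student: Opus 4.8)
The plan is to dispatch the topological claim first and then prove the three-way equivalence as a cycle (1)$\Rightarrow$(3)$\Rightarrow$(2)$\Rightarrow$(1), leaning on Lemma~\ref{remove} (which records that $\overline{Z}$ is a \emph{minimal} closed $C$-representation, that $\widetilde{Z}$ is a patch, and that $Z = \min\widetilde{Z} = \min\overline{Z}$) together with Lemma~\ref{ti}. The organizing observation I would flag at the outset is that, because the specialization order agrees with set inclusion, $\V(B) = \{B' \in X : B \subseteq B'\} = \{B' : B \leq B'\}$ is precisely the spectral closure $\overline{\{B\}}$; hence the smallest closed set containing $B$ is $\V(B)$ itself. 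For the statement that the two subspace topologies agree on $Z$: since $Z = \min\widetilde{Z}$ by Lemma~\ref{remove} and $\widetilde{Z}$ is a patch, Proposition~\ref{Hoc prop}(4)(c) (the patch and spectral topologies agree on the minimal elements of a patch) applies directly to $\min\widetilde{Z} = Z$.

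For the equivalence, (1)$\Rightarrow$(3) is exactly the last assertion of Lemma~\ref{ti}, and (2)$\Rightarrow$(1) is immediate: taking the closed subset $Y = \emptyset \subsetneq \V(B)$ in the definition of strong irredundance, if $Z \setminus \{B\}$ \emph{were} a $C$-representation then $(Z \setminus \{B\}) \cup \emptyset$ would witness a failure of strong irredundance, so $B$ must be irredundant. The substantive step is (3)$\Rightarrow$(2). Assuming $B$ is isolated in the (patch $=$ spectral) subspace topology on $Z$, I would first establish the key fact $B \notin \overline{Z \setminus \{B\}}$: isolation gives a patch-open $O$ with $O \cap Z = \{B\}$, so $B \notin \widetilde{Z \setminus \{B\}}$, and since $B$ is a minimal element of $\widetilde{Z}$ while $\overline{Z \setminus \{B\}} = \up\,\widetilde{Z \setminus \{B\}}$ by Proposition~\ref{Hoc prop}(3), no element of $\widetilde{Z \setminus \{B\}}$ lies below $B$, whence $B \notin \overline{Z \setminus \{B\}}$.

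Now let $Y \subseteq \V(B)$ be closed with $(Z \setminus \{B\}) \cup Y$ a $C$-representation. Since every member of $X$ contains $A$, passing to the closure preserves the property of being a $C$-representation, so $\overline{(Z \setminus \{B\}) \cup Y}$ is a closed $C$-representation; and as $Z \setminus \{B\} \subseteq \overline{Z}$ and $Y \subseteq \V(B) = \overline{\{B\}} \subseteq \overline{Z}$, it is contained in $\overline{Z}$. Minimality of $\overline{Z}$ (Lemma~\ref{remove}) forces $\overline{(Z \setminus \{B\}) \cup Y} = \overline{Z}$, i.e. $\overline{Z \setminus \{B\}} \cup Y = \overline{Z}$ using that $Y$ is closed. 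Since $B \in \overline{Z}$ but $B \notin \overline{Z \setminus \{B\}}$, we conclude $B \in Y$; then closedness of $Y$ gives $\V(B) = \overline{\{B\}} \subseteq Y$, so $Y = \V(B)$, establishing strong irredundance.

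I expect the main obstacle to be the (3)$\Rightarrow$(2) step, and specifically the realization that the closedness constraint on $Y$ is what makes strong irredundance follow: once $B \in Y$ with $Y$ closed, one has $Y = \V(B)$ for free, and the only real work is forcing $B \in Y$, which is precisely where the minimality of $\overline{Z}$ combines with $B \notin \overline{Z \setminus \{B\}}$. It is worth emphasizing that no tree hypothesis is available here, so one cannot argue pointwise about the larger members of $\V(B)$; the containment $\V(B) \subseteq Y$ must come from closedness rather than from order-theoretic comparison, and getting that emphasis right is the delicate part of the write-up.
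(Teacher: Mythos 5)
Your proposal is correct and follows essentially the same route as the paper: the topology agreement via Proposition~\ref{Hoc prop}(4)(c) applied to the minimal points of a patch, (1)$\Rightarrow$(3) via Lemma~\ref{ti}, and (3)$\Rightarrow$(2) by forming the closed $C$-representation $\overline{Z\setminus\{B\}}\cup Y\subseteq\overline{Z}$ and invoking the minimality of $\overline{Z}$ from Lemma~\ref{remove} to force $B\in Y$ and hence $Y=\V(B)=\overline{\{B\}}$. The only cosmetic differences are that you derive $B\notin\overline{Z\setminus\{B\}}$ from patch-isolation via Proposition~\ref{Hoc prop}(3) where the paper reads it off directly from spectral isolation, and that you spell out (2)$\Rightarrow$(1) with $Y=\emptyset$ where the paper calls it clear.
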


\begin{proof}   By assumption, there exists a minimal closed $C$-representation $Y$ of $A$ such that $Z = \min Y$.  
Since $Y$ is  a patch in $X$, we have by Proposition~\ref{Hoc prop}(4)(a) that $Y$ is spectral in the spectral subspace topology. Also by Proposition~\ref{Hoc prop}(4)(c), the spectral topology on the  minimal points of a spectral space is the same as  the patch topology. Thus the spectral and patch  topologies agree on $Z$. 

That (2) implies (1) is clear, and that (1) implies (3) follows from Lemma~\ref{ti}. 
It remains to prove that (3) implies (2). 
 Suppose $B$ is  isolated in $Z$,
  so that  $B \not \in \overline{Z \setminus \{B\}}$.  To see that $B$ is strongly irredundant in $Z$,
  let $F$ be a closed subset of $\V(B)$ 
such that $  (Z \setminus \{B\}) \cup F$ is a $C$-representation of $A$.  Now $F \subseteq \V(B) \subseteq \overline{Z}$, so   
   $Z' := \overline{(Z \setminus \{B\})} \cup F$ is a closed $C$-representation of $A$ contained in $\overline{Z}$.  
    By  Lemma~\ref{remove},
  $\overline{Z}$ is a minimal  closed $C$-representation of $A$, so  $Z' = \overline{Z}$, and hence $B \in Z'$. Since $B \not \in \overline{Z \setminus \{B\}}$, we conclude that $B \in F$ and hence $F = \V(B)$.  Thus 
  $B$ is strongly irredundant in $Z$.
  %
\qed\end{proof}

\begin{corollary} \label{first cor} Let $X$ be a spectral $C$-representation of $A$, and let $Z$ be a minimal $C$-represent\-a\-tion  of $A$ in $X$. Then $Z$ 
 contains a (strongly) irredundant $C$-representation of $A$ if and only if   the set of isolated points in $Z$ is dense in  $Z$.
 Hence there is at most one irredundant $C$-representation of $A$ in $\widetilde{Z}$.  
\end{corollary}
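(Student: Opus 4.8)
The plan is to identify the isolated points of $Z$ with the irredundant (equivalently, strongly irredundant) members of $Z$, and then to show that the candidate irredundant $C$-representation is forced to be exactly this set. Write $\mathcal{I}$ for the set of isolated points of $Z$ in its subspace topology. Theorem~\ref{surprised} supplies the three facts I lean on throughout: the spectral and patch subspace topologies agree on $Z$ (so by Proposition~\ref{Hoc prop}(2) this topology is Hausdorff and points of $Z$ are closed), and for $B \in Z$ the conditions ``$B \in \mathcal{I}$'', ``$B$ irredundant in $Z$'', and ``$B$ strongly irredundant in $Z$'' coincide. I also use two routine observations: since $A \subseteq B$ for every $B \in X$, any subset of $X$ that contains a $C$-representation is again a $C$-representation (monotonicity); and, by Lemma~\ref{remove}, $\overline{Z}$ and $\widetilde{Z}$ are the \emph{minimal} closed and patch $C$-representations, with $Z = \min\overline{Z} = \min\widetilde{Z}$.

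For the implication ($\Leftarrow$), I would assume $\mathcal{I}$ dense in $Z$ and prove that $\mathcal{I}$ itself is a strongly irredundant $C$-representation contained in $Z$. That $\mathcal{I}$ is a $C$-representation follows from density: given $d \in C \setminus A$, some $B_0 \in Z$ omits $d$, so $\U(d) \cap Z$ is nonempty open and meets $\mathcal{I}$, exhibiting a member of $\mathcal{I}$ that omits $d$; hence $(\bigcap_{B \in \mathcal{I}} B) \cap C \subseteq A$, the reverse being automatic. For strong irredundance, fix $B \in \mathcal{I}$ and a closed $Y \subseteq \V(B)$ with $(\mathcal{I} \setminus \{B\}) \cup Y$ a $C$-representation; by monotonicity $(Z \setminus \{B\}) \cup Y$ is then also one, and the strong irredundance of $B$ in $Z$ forces $Y = \V(B)$. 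Proving the \emph{strong} version here, while assuming only the \emph{weak} version in the converse, is what lets the parenthetical ``(strongly)'' cover both readings at once.

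For ($\Rightarrow$), suppose $Z$ contains an irredundant $C$-representation $W$. Closedness plus monotonicity makes $\overline{W}$ a closed $C$-representation inside the minimal one $\overline{Z}$, so $\overline{W} = \overline{Z}$ and hence $W$ is dense in $Z$. Each $B \in W$ is isolated in $W$ by the final assertion of Lemma~\ref{ti}; combining this with density and the Hausdorff property (if $O$ is open in $Z$ with $O \cap W = \{B\}$, then $O \subseteq \mathrm{cl}_Z(O \cap W) = \{B\}$) shows $B$ is isolated in $Z$. Thus $W \subseteq \mathcal{I}$, and since $W$ is already dense, so is $\mathcal{I}$.

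Finally, for the uniqueness clause I would show that \emph{any} irredundant $C$-representation $W \subseteq \widetilde{Z}$ equals $\mathcal{I}$. Since $\widetilde{W}$ is a patch $C$-representation inside the minimal patch representation $\widetilde{Z}$, Lemma~\ref{remove} gives $\widetilde{W} = \widetilde{Z}$, and by Lemma~\ref{ti}(1) every $B \in W$ is irredundant in $\widetilde{Z}$. Its witness $d$ satisfies $d \notin B$ but lies in every other member of $\widetilde{Z}$, so if $B$ were not minimal, some $m \in Z = \min\widetilde{Z}$ with $m \subsetneq B$ would force $d \in m \subseteq B$, a contradiction; hence $W \subseteq Z$, and then $W \subseteq \mathcal{I}$ exactly as above. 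Conversely no isolated point can be dropped: if $B_0 \in \mathcal{I} \setminus W$, its irredundancy witness would lie in $(\bigcap_{B \in W} B) \cap C = A \subseteq B_0$, contradicting that the witness is outside $B_0$. So $\mathcal{I} \subseteq W$ and $W = \mathcal{I}$. I expect the main obstacle to be the bookkeeping between the spectral and patch topologies---in particular justifying the identifications $\overline{W} = \overline{Z}$ and $\widetilde{W} = \widetilde{Z}$, and the density-plus-Hausdorff step that upgrades ``isolated in $W$'' to ``isolated in $Z$''---rather than any single hard estimate.
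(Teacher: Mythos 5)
Your proposal is correct and follows essentially the same route as the paper's proof: both directions and the uniqueness clause rest on Theorem~\ref{surprised}, Lemma~\ref{remove} (minimality of $\overline{Z}$ and $\widetilde{Z}$), Lemma~\ref{ti}, and the same $\U(d)$-density argument showing the isolated points form a $C$-representation. A few individual steps are routed slightly differently but equivalently --- you upgrade ``isolated in $W$'' to ``isolated in $Z$'' via density plus the Hausdorff property where the paper goes through ``irredundant in $\widetilde{Z}$, hence in $Z$,'' and you place $W$ inside $Z=\min\widetilde{Z}$ by an order-theoretic witness argument where the paper uses that isolated points of $\widetilde{Z}$ must lie in the dense subset $Z$ --- and your explicit monotonicity remark cleanly handles the passage from strong irredundance in $Z$ to strong irredundance in the subrepresentation, a point the paper leaves implicit.
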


\begin{proof}  By Theorem~\ref{surprised} the patch and spectral topologies agree on $Z$, so in the proof we work exclusively in the patch topology. 
Suppose $Z$ is a minimal $C$-representation of $A$ in $X$ that contains an irredundant $C$-representation $Y$ of $A$.
Since $Y$ is a $C$-representation of $A$, so is $\widetilde{Y}$. Thus since by Lemma~\ref{remove}, $\widetilde{Z}$ is a minimal patch $C$-representation of $A$, we have $\widetilde{Y} = \widetilde{Z}$. Therefore, $Y$ is dense in $Z$.  Moreover, by Lemma~\ref{ti}, each member of $Y$ is  irredundant in $\widetilde{Y} = \widetilde{Z}$, hence in $Z$.  Therefore, by Theorem~\ref{surprised}, each member of $Y$ is isolated in $Z$.

Conversely, suppose that the set $Y$ of isolated points in $Z$ is dense in $Z$.  
 If $Y$ is not a $C$-representation of $A$, then there exists $B' \in Z$ and  $c \in (\bigcap_{B \in Y}B) \cap C$ with $c \not \in B'$. Thus $Y \subseteq \V(c)$ and $B' \not \in \V(c)$, so that  $\U(c) \cap Y = \emptyset$ while $\U(c) \cap Z \ne \emptyset$, 
  a contradiction to the assumption that $Y$ is dense in $Z$.  Therefore, $Y$ is a $C$-representation of $A$, and hence by Theorem~\ref{surprised}, the members of $Y$ are strongly irredundant in the $C$-representation $Z$.    

To prove the last claim of the corollary, suppose $Y$ is an irredundant  $C$-represen\-ta\-tion of $A$ in $\widetilde{Z}$.  By Lemma~\ref{tfae},  the elements of $Y$ are isolated points in $\widetilde{Z}$ with respect to the patch topology. Thus for each $y \in Y$, $\{y\}$ is open in $\widetilde{Z}$, so that since $Z$ is  dense in $\widetilde{Z}$, we must have $y \in Z$. Therefore, $Y \subseteq Z$. Since $Y$ is an irredundant $C$-representation of $A$, 
Theorem~\ref{surprised} implies that 
$Y$ is the set of isolated points of $Z$. Hence there is at most one  irredundant $C$-representation of $A$ in $\widetilde{Z}$, namely the set of isolated points of $Z$.    
\qed\end{proof}


A topological space $X$ is {\it scattered} if every nonempty subspace $Y$ of $X$ contains a point  that is isolated in $Y$; equivalently, in every nonempty subset $Y$ of $X$ the set of isolated points in $Y$ is dense in $Y$. 

\begin{corollary} \label{scattered cor} Let $X$ be a spectral $C$-representation of $A$. If $X$  is scattered in the spectral or patch topologies, then $X$ contains a strongly irredundant $C$-representation of $A$.  
\end{corollary}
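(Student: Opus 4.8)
The plan is to exploit the characterization of scatteredness that the excerpt itself provides, namely that $X$ is scattered precisely when the isolated points of every nonempty subspace $Y$ are dense in $Y$. First I would apply Lemma~\ref{minimal exist} to extract from the given spectral $C$-representation $X$ a minimal closed $C$-representation, and then pass to its minimal elements to obtain a minimal $C$-representation $Z$ of $A$ in $X$; this is the object to which the earlier structural results apply. The goal is then to produce a strongly irredundant $C$-representation inside $Z$, and for this the natural candidate is the set of isolated points of $Z$.

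The key step is to feed $Z$ into Corollary~\ref{first cor}, which asserts that $Z$ contains a (strongly) irredundant $C$-representation of $A$ if and only if the set of isolated points of $Z$ is dense in $Z$. So what must be verified is precisely that density condition. Here is where the scatteredness hypothesis enters: by the stated equivalent formulation of scattered, in \emph{every} nonempty subspace the isolated points are dense, and in particular this holds for the subspace $Z$ itself. One subtlety to address is that scatteredness is assumed in the spectral or patch topology on all of $X$, whereas the relevant density statement concerns $Z$ in its own subspace topology; I would note that scatteredness passes to subspaces directly from the definition (a subspace of $X$ is again a space in which every nonempty subspace has an isolated point), so $Z$ inherits scatteredness, and hence its isolated points are dense in $Z$.

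A second point requiring care is the \emph{coincidence of topologies} on $Z$. Theorem~\ref{surprised} guarantees that the spectral and patch subspace topologies agree on a minimal $C$-representation $Z$, so the notion of ``isolated point of $Z$'' is unambiguous and the hypothesis ``scattered in the spectral or patch topology'' yields the density we need regardless of which of the two topologies the scatteredness was assumed in. With the density of the isolated points of $Z$ in hand, Corollary~\ref{first cor} immediately furnishes a strongly irredundant $C$-representation of $A$ contained in $Z$, and hence in $X$, completing the argument.

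The main obstacle I anticipate is not any deep computation but rather making the transfer of scatteredness to the subspace $Z$ clean and making sure the topology-matching from Theorem~\ref{surprised} is invoked at the right moment, so that the single hypothesis about $X$ correctly produces the density statement about $Z$ in whichever of the two agreeing topologies Corollary~\ref{first cor} is phrased in. Once those bookkeeping points are handled, the result follows formally by chaining Lemma~\ref{minimal exist}, Theorem~\ref{surprised}, and Corollary~\ref{first cor}.
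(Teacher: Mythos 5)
Your proposal is correct and follows essentially the same route as the paper: pass to a minimal $C$-representation $Z$, use scatteredness to get density of the isolated points of $Z$, and conclude via Corollary~\ref{first cor}. The only cosmetic difference is how the ``spectral or patch'' hypothesis is handled --- the paper reduces to the patch case by noting that the patch topology refines the spectral one (so spectral-scattered implies patch-scattered), whereas you invoke the agreement of the two topologies on $Z$ from Theorem~\ref{surprised}; both are valid.
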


\begin{proof} Since the patch topology refines the spectral topology, to be scattered in the spectral topology implies the space is  scattered in the patch topology. Thus we assume that $X$ is scattered in the patch topology. 
Let $Z$ be a minimal $C$-representation of $A$. Since $X$ is scattered, the set of isolated points in $Z$ is dense in $Z$ with respect to the patch topology, so  by Corollary~\ref{first cor}, 
$Z$ contains a strongly irredundant $C$-representation of $A$. 
\qed\end{proof}

\begin{corollary} \label{countable case}  If $X$  is a countable spectral $C$-representation of $A$, then $X$  contains a strongly  irredundant $C$-representation of $A$.  
\end{corollary}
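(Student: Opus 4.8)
The plan is to deduce this from the scattered case already handled in Corollary~\ref{scattered cor}, by proving the classical topological fact that a countable compact Hausdorff space is scattered. By Proposition~\ref{Hoc prop}(2), $X$ equipped with the patch topology is a compact, Hausdorff, zero-dimensional space. Once we know that $X$ is scattered in the patch topology, Corollary~\ref{scattered cor} immediately produces a strongly irredundant $C$-representation of $A$ inside $X$. So the entire burden is to show: a countable, compact, Hausdorff, zero-dimensional space is scattered.

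I would argue by contradiction. Suppose $X$ (in the patch topology) is not scattered. By the definition of scattered, there is a nonempty subspace $Y \subseteq X$ in which no point is isolated; that is, $Y$ is dense-in-itself. One then checks the standard fact that the patch closure $\widetilde{Y}$ of a dense-in-itself set is again dense-in-itself: given $p \in \widetilde{Y}$ and a patch-open $U \ni p$, the set $U$ meets $Y$ in some $q$, and using that $q$ is a limit point of $Y$ one finds a point of $\widetilde{Y}\setminus\{p\}$ in $U$. Thus $P := \widetilde{Y}$ is a \emph{perfect} subset of $X$: it is closed (hence compact, being a patch-closed subset of the compact space $X$), Hausdorff, and has no isolated points. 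It remains to derive a contradiction from the countability of $P$.

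The heart of the argument—and the step I expect to be the main obstacle to write out carefully—is showing that a nonempty perfect compact Hausdorff space is uncountable via a Cantor scheme. Here zero-dimensionality of the patch topology simplifies matters considerably: I would build, by recursion on finite binary strings $s \in \{0,1\}^{<\omega}$, nonempty clopen subsets $U_s$ of $P$ with $U_{\emptyset} = P$ and $U_{s0}, U_{s1} \subseteq U_s$ disjoint and nonempty. This is possible because each nonempty clopen $U_s$ in a perfect space contains at least two distinct points, and in a zero-dimensional Hausdorff space two points can be separated by a clopen set, whose intersections with $U_s$ give the required disjoint nonempty clopen pair. Then for each $\sigma \in \{0,1\}^{\mathbb{N}}$ the nested family $\{U_{\sigma|n}\}_{n}$ consists of nonempty clopen (hence compact) sets with the finite intersection property, so $\bigcap_n U_{\sigma|n} \ne \emptyset$ by compactness; choosing a point in each intersection yields an injection $\{0,1\}^{\mathbb{N}} \hookrightarrow P$, since distinct $\sigma$ eventually land in disjoint $U_s$. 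Hence $|P| \geq 2^{\aleph_0} > \aleph_0$, contradicting countability. This contradiction shows $X$ is scattered in the patch topology, and Corollary~\ref{scattered cor} finishes the proof.
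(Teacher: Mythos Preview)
Your proof is correct and follows the same overall strategy as the paper: show that $X$ with the patch topology is scattered, then invoke Corollary~\ref{scattered cor}. The difference lies only in how the key topological fact ``countable compact Hausdorff $\Rightarrow$ scattered'' is established. The paper simply cites the Mazurkiewicz--Sierpi\'nski theorem that every countable compact Hausdorff space is homeomorphic to an ordinal space (with the order topology), and observes that ordinal spaces, being well-ordered, are scattered. You instead give a direct, self-contained argument: pass to the closure of a dense-in-itself subset to obtain a nonempty perfect set, then run a Cantor scheme (using zero-dimensionality of the patch topology to split into disjoint clopen pieces) to embed $2^{\mathbb{N}}$, contradicting countability. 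Your route is more elementary in that it avoids the classification theorem, at the cost of writing out a standard descriptive-set-theoretic construction; the paper's route is shorter but imports a stronger external result than is actually needed.
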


\begin{proof}  
Since $X$ is spectral, the patch topology on $X$ is compact and Hausdorff. A countable compact Hausdorff space is homeomorphic to an ordinal space \cite{MS}, and hence scattered since  an ordinal space is well-ordered. 
Thus  
$X$ is scattered in the patch topology, and by 
Corollary~\ref{scattered cor}, $X$ contains  a strongly  irredundant $C$-representation of $A$
\qed
 \end{proof}

We single out next the members of $X$ that must appear in every closed $C$-representation of $A$. These members play an important role in the applications to intersections of valuation rings in Sections 5 and 6.

\begin{definition} Let $X$ be a spectral $C$-representation of $A$. Let $Y$ be the intersection of all closed $C$-representations of $A$  in $X$ (so that $Y$ is a closed set in $X$, but not necessarily a $C$-representation of $A$). An element $B \in X$ is {\it critical} for the $C$-representation $X$  if $B \in Y$. Since $Y$ is a closed subset of the spectral space $X$, $Y$ contains minimal elements. 
We define ${\cal C}(X) = \min Y$. 


 \end{definition}

 \begin{proposition} \label{vacant lemma}  Let $X$ be a spectral $C$-representation of $A$. 
 Then $B$ is critical in $X$ if and only if whenever $A = A_1 \cap \cdots \cap A_n \cap C$, where  each $A_i$  is an intersection of members of $X$, it must be that $A_i \subseteq B$ for some $i$. 

\end{proposition}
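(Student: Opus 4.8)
The plan is to prove both implications by contraposition, exploiting in each case the same translation between finite decompositions of $A$ and finite unions of the closed sets $\V(d)$. Throughout I would use three facts: every member of $X$ contains $A$ (since $A \subseteq C$ forces $A \subseteq \bigcap_{B \in X}B$); each $\V(d)$ is closed, being the complement of the subbasic quasicompact open set $\U(d)$; and the specialization order of $X$ agrees with set inclusion, so that a finite union of sets $\V(d_i)$ is a closed subset of $X$.

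For the forward direction I would show that if no $A_i$ is absorbed by $B$, then $B$ is not critical. Suppose $A = A_1 \cap \cdots \cap A_n \cap C$ with each $A_i = \bigcap_{B' \in S_i}B'$ an intersection of members of $X$, and suppose $A_i \not\subseteq B$ for every $i$. Choose $d_i \in A_i \setminus B$ and set $W = \V(d_1) \cup \cdots \cup \V(d_n)$. Then $W$ is closed; since $d_i \in A_i$ forces $S_i \subseteq \V(d_i)$, we have $S_1 \cup \cdots \cup S_n \subseteq W$, whence $\bigcap_{B' \in W}B' \subseteq A_1 \cap \cdots \cap A_n$ and so $(\bigcap_{B' \in W}B') \cap C = A$; thus $W$ is a closed $C$-representation of $A$. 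Because $d_i \notin B$ gives $B \notin \V(d_i)$ for every $i$, we get $B \notin W$, so $B$ does not lie in the intersection $Y$ of all closed $C$-representations; i.e., $B$ is not critical.

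For the converse I would argue that if $B$ is not critical, some decomposition evades the conclusion. If $B \notin Y$, there is a closed $C$-representation $W$ with $B \notin W$. Since the sets $\U(d)$ form a subbasis, $B$ lies in a basic open set $\U(d_1) \cap \cdots \cap \U(d_m)$ contained in $X \setminus W$; hence $d_1,\ldots,d_m \notin B$, and disjointness from $W$ says exactly that every element of $W$ lies in some $\V(d_i)$, i.e. $W = \bigcup_{i=1}^m (W \cap \V(d_i))$. Discarding any empty pieces (at least one is nonempty, as $W \neq \emptyset$), put $A_i = \bigcap_{B' \in W \cap \V(d_i)}B'$. Then $A_1 \cap \cdots \cap A_m = \bigcap_{B' \in W}B'$, so $A = A_1 \cap \cdots \cap A_m \cap C$, while $d_i \in A_i$ together with $d_i \notin B$ yields $A_i \not\subseteq B$ for every $i$, the desired decomposition.

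The step I would single out as the crux is resisting the natural but failed attempt to separate $B$ from a representation by passing to a closure such as $\overline{S_1 \cup \cdots \cup S_n}$: that closure can reabsorb $B$ precisely when $B$ contains some member of the union, since closures here are upward closed under inclusion. The correct object is instead the union of the ``cylinders'' $\V(d_i)$ cut out by the separating elements $d_i \notin B$, which keeps $B$ outside while remaining a $C$-representation. The only remaining bookkeeping point is to drop empty intersections $W \cap \V(d_i) = \emptyset$ (whose intersection would be all of $D$), which is harmless because $W$ is nonempty.
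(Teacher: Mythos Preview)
Your proof is correct and follows essentially the same approach as the paper: both directions hinge on the correspondence between decompositions $A = A_1 \cap \cdots \cap A_n \cap C$ and coverings of a closed $C$-representation by finitely many sets of the form $\V(-)$. The only cosmetic differences are that the paper argues the forward direction directly (using the closed sets $\V(A_i)$ themselves rather than picking witnesses $d_i \in A_i \setminus B$) and, for the converse, works with an arbitrary basic closed set $\V(F_1) \cup \cdots \cup \V(F_n)$ containing the representation rather than a basic open neighborhood of $B$; these are dual formulations of the same computation.
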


\begin{proof}
Suppose $B$ is critical in $X$ and   $A = A_1 \cap \cdots \cap A_n \cap C$, where  each $A_i$  is an intersection of members of $X$. Then $\V(A_1) \cup \cdots \cup \V(A_n)$ is a closed $C$-representation of $A$ in $X$. Since $B$ is critical in $X$,  $B \in \V(A_i)$ for some $i$, and hence $A_i \subseteq B$. 
Conversely,  suppose that whenever $A = A_1 \cap \cdots \cap A_n \cap C$, where  each $A_i$  is an intersection of members of $X$, we have $A_i \subseteq B$ for some $i$. Let $Y$ be a closed $C$-representation of $A$ in $X$.  Since $Y$ is closed and $X$ is spectral, $Y$ is an intersection of sets of the form $\V(F_1) \cup \cdots \cup \V(F_n)$, where each $F_i$ is a finite subset of $C$. Thus to show that $B \in Y$ it suffices to show $B$ is in every set of this form that contains $Y$. Let $F_1,\ldots,F_n$ be finite subsets of $C$ such that $Y \subseteq \V(F_1) \cup \cdots \cup \V(F_n)$.  For each $i$, let $A_i =   \bigcap_{E \in \V(F_i)}E$. Since $Y$ is a $C$-representation of $A$, we have  $A = A_1 \cap \cdots \cap A_n \cap C$.  By assumption, $A_i \subseteq B$ for some $i$, so  $F_i \subseteq B$.  Thus $B \in V(F_i)$, which proves the proposition.
   \qed
\end{proof}

The next corollary shows that for critical members of $X$, being irredundant in a representation is the same as being strongly irredundant.

\begin{corollary} \label{irredundant is si}  Let $X$ be a spectral $C$-representation of $A$. If  $B \in X$ is critical in $X$ and  $B$ is irredundant in some $C$-representation $Z$ of $A$ in $X$, then $B$ is strongly irredundant in $Z$.  
\end{corollary}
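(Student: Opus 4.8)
The plan is to reduce strong irredundance of $B$ to the single assertion that $B \in Y$ for any competing closed set $Y$, and then to exploit the fact that $\V(B)$ is nothing but the spectral closure of the singleton $\{B\}$. First I would record this identity: since the chosen subbasis makes the specialization order agree with set inclusion, and since a singleton is a patch (the patch topology being Hausdorff by Proposition~\ref{Hoc prop}(2)), Proposition~\ref{Hoc prop}(3) gives $\overline{\{B\}} = \up(\widetilde{\{B\}}) = \up\{B\} = \{B' \in X : B \subseteq B'\} = \V(B)$. The payoff is immediate: for any \emph{closed} set $Y$, knowing that $B \in Y$ forces $\V(B) = \overline{\{B\}} \subseteq Y$.

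With this in hand, let $Y$ be a closed subset of $\V(B)$ such that $(Z \setminus \{B\}) \cup Y$ is a $C$-representation of $A$; since $Y \subseteq \V(B)$ is assumed, it suffices to show $Y = \V(B)$, and by the previous paragraph it is enough to show $B \in Y$. The key manoeuvre is to pass to spectral closures. I would first check that the closure of any $C$-representation is again a $C$-representation: if $W$ is a $C$-representation, then $A \subseteq B'$ for every $B' \in W$, so $W \subseteq \V(A) = \bigcap_{a \in A}\V(a)$, and since $\V(A)$ is closed we get $\overline{W} \subseteq \V(A)$; combined with the obvious reverse containment of intersections this yields $A = (\bigcap_{B' \in \overline{W}}B') \cap C$. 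Applying this to $W = (Z \setminus \{B\}) \cup Y$ shows that $\overline{Z \setminus \{B\}} \cup Y = \overline{(Z \setminus \{B\}) \cup Y}$ is a \emph{closed} $C$-representation of $A$. Because $B$ is critical, it lies in every closed $C$-representation, so $B \in \overline{Z \setminus \{B\}} \cup Y$.

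It then remains only to rule out $B \in \overline{Z \setminus \{B\}}$, and this is exactly where irredundance is used. By the final statement of Lemma~\ref{ti}, irredundance of $B$ in $Z$ makes $B$ isolated in $Z$; concretely, there is $c \in (\bigcap_{B' \in Z \setminus \{B\}}B') \cap C$ with $c \notin B$, so that $Z \setminus \{B\} \subseteq \V(c)$ and hence $\overline{Z \setminus \{B\}} \subseteq \V(c)$, while $B \notin \V(c)$. Thus $B \notin \overline{Z \setminus \{B\}}$, which forces $B \in Y$, and the reduction from the first paragraph gives $\V(B) \subseteq Y$, whence $Y = \V(B)$.

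I expect the main obstacle to be not any single computation but getting the closure bookkeeping exactly right: namely verifying cleanly that taking the spectral closure of $(Z \setminus \{B\}) \cup Y$ preserves the property of being a $C$-representation (which rests on $\V(A)$ being closed), and combining this with criticality. Once the identity $\V(B) = \overline{\{B\}}$ and the closure-stability of $C$-representations are established, the rest is a short logical chain, and criticality enters precisely to guarantee $B$ survives in the closed representation while irredundance guarantees $B$ is not absorbed into $\overline{Z \setminus \{B\}}$.
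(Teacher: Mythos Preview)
Your proof is correct and takes a genuinely different route from the paper's. The paper proves $B \in Y$ by expressing the closed set $Y$ as an intersection of basic closed sets $\V(F_1)\cup\cdots\cup\V(F_n)$; for each such basic set it manufactures a decomposition $A = A_1 \cap \cdots \cap A_n \cap (\bigcap_{B'\in Z\setminus\{B\}}B') \cap C$ with $A_i = \bigcap_{B'\in\V(F_i)}B'$, and then invokes the characterization of criticality in Proposition~\ref{vacant lemma} together with irredundance to force $F_i \subseteq B$ for some $i$. You instead use the \emph{definition} of criticality directly: you observe that the spectral closure of any $C$-representation is again a (closed) $C$-representation, apply this to $(Z\setminus\{B\})\cup Y$, place $B$ in $\overline{Z\setminus\{B\}}\cup Y$ by criticality, and use irredundance to exclude $B$ from $\overline{Z\setminus\{B\}}$. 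Your approach is more conceptual and avoids both the basis argument and Proposition~\ref{vacant lemma}; the paper's approach, on the other hand, makes visible exactly how the finite-decomposition criterion for criticality interacts with the subbasic closed sets, which is in keeping with how criticality is used elsewhere in the paper.

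One small point worth making explicit in your write-up: the assertion ``$A \subseteq B'$ for every $B' \in W$'' that you use to show $\overline{W}\subseteq\V(A)$ holds because $W \subseteq X$ and $A = (\bigcap_{B'\in X}B')\cap C \subseteq \bigcap_{B'\in X}B'$, not merely because $W$ itself is a $C$-representation. This is harmless here, but stating it this way avoids any appearance of circularity.
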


\begin{proof}
Suppose there is a nonempty  closed subset $Y$ of $\V(B)$ such that $(Z \setminus \{B\}) \cup Y$ is a $C$-representation of $A$. 
We claim that  $B \in Y$.  Since $Y$ is closed, $Y$ is an intersection of sets of the form $\V(F_1) \cup \cdots \cup \V(F_n)$, where $F_1,\ldots,F_n$ are finite subsets of $ D$.  Let $F_1,\ldots,F_n$ be finite subsets of $ D$ such that  
$Y \subseteq \V(F_1) \cup \cdots \cup \V(F_n)$. 
 Then $$A=(\bigcap_{ B' \in \V(F_1)}B') \cap \cdots \cap (\bigcap_{B' \in \V(F_n)}B')  \cap (\bigcap_{B' \in Z \setminus \{B\}}B') \cap C  \subseteq B.$$  Since $B$ is irredunant in $Z$, we have $\bigcap_{B' \in Z \setminus \{B\}}B' \not \subseteq B$. Thus since $B$ is critical in $X$, we conclude that $F_i \subseteq \bigcap_{ B' \in \V(F_i)}B' \subseteq  B$ for some $i$. Hence $B \in  \V(F_1) \cup \cdots \cup \V(F_n)$, which shows that $B \in Y$. 
\qed
\end{proof}

Next we prove a uniqueness theorem for strongly irredundant $C$-representations when ${\cal C}(X)$ has enough members to be itself a $C$-representation of $A$. This case is important in Section 6, where we work with  $v$-domains, a class of rings that can be represented as an intersection of their critical valuation overrings. 

\begin{theorem} \label{unique}  Let $X$ be a spectral $C$-representation of $A$. Then $A = (\bigcap_{B \in {{\cal C}(X)}}B) \cap C$  if and only if  $A$ has a unique minimal $C$-representation in $X$. If this is the case, then the following statements hold for  the set 
\begin{center}
$S = \{B \in X$: $B$ {\rm is strongly irredundant in some} $C${\rm-representation of} $A$ {\rm in} $X\}$.
\end{center}   
\begin{description}[(3)] 
\item[{\em (1)}]  $S \subseteq {\cal C}(X)$ and hence every member of $S$ is critical in $X$. 

\item[{\em (2)}] Each $B \in S$ is strongly irredundant in the $C$-represent\-a\-tion
 ${\cal C}(X)$.

\item[{\em (3)}]  
If $A$ has a strongly irredundant $C$-representation $Z$ in $X$, then $Z = S$.

\item[{\em (4)}]  $X$ contains at most one strongly irredundant $C$-representation of $A$.   

\end{description} 
\end{theorem}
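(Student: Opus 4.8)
The plan is to write $Z_0 := {\cal C}(X)$ and let $Y_0$ denote the intersection of all closed $C$-representations of $A$ in $X$, so that by definition $Z_0 = \min Y_0$ and $Y_0$ is closed, hence up-closed under the specialization (= inclusion) order. I first dispose of the main equivalence. By Lemma~\ref{remove} the assignments $Y \mapsto \min Y$ and $Z \mapsto \overline{Z}$ are mutually inverse bijections between minimal closed $C$-representations and minimal $C$-representations, so a unique minimal representation is the same as a unique minimal closed representation. If $A = (\bigcap_{B\in Z_0}B)\cap C$, then since $Y_0 = \up Z_0$ and every element of $Y_0$ lies above a minimal one (Proposition~\ref{Hoc prop}(4)(b)), we have $\bigcap_{B\in Y_0}B = \bigcap_{B\in Z_0}B$; thus $Y_0$ is itself a closed $C$-representation, and being contained in every closed $C$-representation it is the unique minimal one, whence $\min Y_0 = Z_0$ is the unique minimal $C$-representation. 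Conversely, a unique minimal $C$-representation forces a unique minimal closed $C$-representation $Y_0 = \overline{Z_0}$, which then equals the intersection of all closed $C$-representations; so $Z_0 = \min Y_0$ is a $C$-representation and $A = (\bigcap_{B\in Z_0}B)\cap C$.

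For the four statements I work under the hypothesis, and the key reduction is to prove that $S$ equals the set $I$ of points of $Z_0$ that are isolated in the (by Theorem~\ref{surprised}, coincident) spectral and patch topologies. The inclusion $I\subseteq S$ is immediate: an isolated point of the minimal representation $Z_0$ is strongly irredundant in $Z_0$ by Theorem~\ref{surprised}, hence lies in $S$. Granting $S=I$, statement (1) is just $I\subseteq Z_0$; statement (2) holds because each $B\in I$ is isolated in the minimal representation $Z_0$ and so strongly irredundant in it by Theorem~\ref{surprised}; for (3), if $Z$ is a strongly irredundant $C$-representation then every member lies in $S=I\subseteq Z_0$, so $Z\subseteq Z_0$ and $\overline{Z}=Y_0$, and a density argument in the patch topology (each $s\in I$ is isolated in $\widetilde{Z_0}\supseteq\widetilde{Z}$ by Lemma~\ref{ti}, while $Z$ is patch-dense in $\widetilde{Z}$) forces $I\subseteq Z$, giving $Z=S$; statement (4) is then immediate from (3).

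The crux is the inclusion $S\subseteq I$. Fix $B\in S$, strongly irredundant in some $C$-representation $Z$; then $B$ is irredundant in $Z$, so there is $d\in C$ with $Z\setminus\{B\}\subseteq\V(d)$ and $d\notin B$. Setting $A' = (\bigcap_{E\in Z\setminus\{B\}}E)\cap C$, strong irredundance translates, via $A=A'\cap B$ and $\bigcap_{E\in Y}E\supseteq B$ for $Y\subseteq\V(B)$, into the assertion that $A'\cap\bigcap_{E\in Y}E\not\subseteq B$ for every closed $Y\subsetneq\V(B)$. I will first show $B$ is critical, i.e.\ $B\in Y_0$. Since $\overline{Z}$ is a closed $C$-representation, $Y_0\subseteq\overline{Z}\subseteq\V(d)\cup\V(B)$, so $Y_0\cap\U(B)\subseteq\V(d)$. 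If $B\notin Y_0$, then $Y:=Y_0\cap\V(B)$ is a proper closed subset of $\V(B)$, and the displayed inequality yields some $x\in A'\cap\bigcap_{E\in Y}E$ with $x\notin B$.

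The main obstacle is precisely here: to reach a contradiction I must show $x$ also lies in every $E\in Y_0\cap\U(B)$, yet there is no a priori control of the witness $x$ on these sets. The resolution exploits the patch topology together with the witness $d$. Because $\overline{Z}=\up\widetilde{Z}$ (Proposition~\ref{Hoc prop}(3)) and $\widetilde{Z}=\widetilde{Z\setminus\{B\}}\cup\{B\}$ with $\widetilde{Z\setminus\{B\}}\subseteq\V(d)$, one obtains $Y_0\cap\U(B)\subseteq\up(\widetilde{Z\setminus\{B\}}\cap\V(d))$; and since $x\in E'$ for all $E'\in Z\setminus\{B\}$ while $\V(x)$ is patch-closed, $\widetilde{Z\setminus\{B\}}\subseteq\V(x)$, so $x\in E$ for every such $E$. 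Then $x\in(\bigcap_{E\in Y_0}E)\cap C = A\subseteq B$, contradicting $x\notin B$; hence $B\in Y_0$. Finally, $d\in C\setminus A$ produces some $z\in Z_0$ with $d\notin z$, and any such $z\in Z_0\cap\U(d)\subseteq Y_0\cap\U(d)\subseteq\V(B)$ satisfies $B\subseteq z$; minimality of $z$ in $Y_0$ together with $B\in Y_0$ forces $z=B$, so $Z_0\cap\U(d)=\{B\}$. This simultaneously places $B$ in $Z_0={\cal C}(X)$ and exhibits $\{B\}$ as open in $Z_0$, so $B\in I$, completing $S\subseteq I$ and hence the theorem.
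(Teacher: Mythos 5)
Your proof is correct, and although its skeleton (identifying $\up {\cal C}(X)$ with the intersection of all closed $C$-representations, then reducing statements (1)--(4) to membership and isolation in ${\cal C}(X)$) parallels the paper's, the crucial inclusion $S \subseteq {\cal C}(X)$ is argued by a genuinely different route. The paper writes $A = B \cap (\bigcap_{B' \in Y}B') \cap C$ and invokes Proposition~\ref{vacant lemma} twice: once to show that $(\up {\cal C}(X)) \cap \V(B)$ together with $Y$ is a $C$-representation (so that strong irredundance forces this closed set to equal $\V(B)$, putting $B$ in $\up {\cal C}(X)$), and once more to push $B$ down into the minimal elements; statement (2) then rests on Corollary~\ref{irredundant is si}. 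You instead suppose $B \notin \up {\cal C}(X)$, apply strong irredundance to the proper closed subset $(\up {\cal C}(X)) \cap \V(B)$ of $\V(B)$ to extract a witness $x \notin B$, and use $\overline{Z} = \up \widetilde{Z}$ together with the patch-closedness and up-closedness of $\V(x)$ to force $x$ into every critical element outside $\V(B)$, hence into $A \subseteq B$ --- a contradiction that bypasses both Proposition~\ref{vacant lemma} and Corollary~\ref{irredundant is si}. The auxiliary witness $d$ then simultaneously places $B$ in $\min$ and exhibits $\{B\}$ as open, so you obtain the sharper organizing fact that $S$ is exactly the set of isolated points of ${\cal C}(X)$, from which (1)--(4) follow by Theorem~\ref{surprised} and the density argument of Corollary~\ref{first cor}; the paper reaches this identification only a posteriori. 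What the paper's route buys is a criticality criterion that is reused elsewhere (e.g., in (4.10) and Section 6); what yours buys is a self-contained topological argument and a cleaner packaging of the conclusions. Two minor points deserve a sentence each: the degenerate case $(\up {\cal C}(X)) \cap \V(B) = \emptyset$, where the witness is supplied by plain irredundance, and the equality $\widetilde{Z} = \widetilde{{\cal C}(X)}$ (from Lemma~\ref{remove}) that underlies your density argument in (3); both are routine.
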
 

\begin{proof}
Observe first that $\up {\cal C}(X)$ is the intersection of all the closed $C$-representations of $A$ in $X$.  Thus 
 $A = (\bigcap_{B \in {{\cal C}(X)}}B) \cap C$ if and only if $\up {\cal C}(X)$ is a $C$-representation of $A$, if and only if there is a unique minimal closed $C$-representation of $A$, if and only if  there is a unique minimal $C$-representation of $A$ in $X$.   

(1)  Let $B \in S$. Then there is  $Y \subseteq X$ such that $Y \cup \{B\}$ is a $C$-representation of $A$ and  $B$ is strongly irredundant in $Y \cup \{B\}$. For $E \in {\cal C}(X)$,  
 Proposition~\ref{vacant lemma} implies $B \subseteq E$ or $\bigcap_{B' \in Y}B' \subseteq E$. If $\bigcap_{B' \in Y}B' \subseteq E$ for every $E \in {\cal C}(X)$, then since by assumption ${\cal C}(X)$ is a $C$-representation of $A$, this forces $A = (\bigcap_{B' \in Y}B') \cap C$, contrary to the irredundance of $B$ in  $\{B\} \cup Y$.  Therefore, $Z:= (\up {\cal C}(X)) \cap \V(B)$ is nonempty, and  for every $E \in {\cal C}(X)$ with $B \not \subseteq E$, it must be that $\bigcap_{B' \in Y}B' \subseteq E$.  Thus since ${\cal C}(X)$ is a $C$-representation of $A$, so is 
  $Z \cup Y$. Since $\up {\cal C}(X)$ and $\V(B)$ are closed subsets of $X$, so is $Z$. 
 Now $Z \subseteq \V(B)$, so  
   since $B$ is strongly irredundant in the $C$-representation $\{B\} \cup Y$, it must be that $Z = \V(B) $. Thus $B \in \up{\cal C}(X)$. 
 
 Next we show  that $B \in {\cal C}(X)$. There exists  $E \in {\cal C}(X)$ such that $E \subseteq B$. Since $A =   B \cap (\bigcap_{B' \in Y}B') \cap C$ and $E$ is critical, Proposition~\ref{vacant lemma} implies that $B = E$ or $\bigcap_{B' \in Y}B' \subseteq E$. Since $E \subseteq B$ and $B$ is irredundant in $\{B\} \cup Y$, the latter cannot occur, so  $B = E$. Therefore, $B$ is minimal in ${\cal C}(X)$.

 (2) As in (1), every $E \in {\cal C}(X)$ with $E \ne B$ contains $ \bigcap_{B' \in Y}B'$, so that if $B$ is not irredundant in ${\cal C}(X)$, then since ${\cal C}(X)$ is a $C$-representation of $A$, we have $A=(\bigcap_{B' \in Y}B')  \cap C$, a contradiction. Thus $B $ is irredundant in ${\cal C}(X)$, and by Corollary~\ref{irredundant is si}, $B$ is strongly irredundant in ${\cal C}(X)$.   
 
 (3) Suppose $Z$ is a strongly irredundant $C$-representation in $X$.  By (1), $Z \subseteq {\cal C}(X)$ and the members of $Z$ are strongly irredundant in ${\cal C}(X)$. Also by (2), the members of  $S$ are strongly irredundant in ${\cal C}(X)$. It follows that $S = Z$.  

(4) This is clear from (3). 
  \qed
\end{proof}






\section{Irredundance in intersections of valuation rings}

In this section, we reinterpret the material of Section 3 for the Zariski-Riemann space of a field. 
We assume the following notation throughout this section. 
\begin{description}[3]
\item[$\bullet$] 
$A$ is a proper integrally closed subring of a field $F$.

\item[$\bullet$] $C$ is a set (not necessarily a ring) such that $A \subsetneq C \subseteq F$. 

\item[$\bullet$]  $\X$ denotes the set of valuation rings of $F$ containing $A$. \end{description}

Zariski introduced a topology on $\X$ (the {\it Zariski topology}) by designating as a  basis of open sets the sets of the form $\{V \in \X:x_1,\ldots,x_n \in V\},$ where $x_1,\ldots,x_n \in F$.  With this topology, the same topology as in Example~\ref{spectral exs}(8), 
 $\X$ is a spectral space and is termed the {\it Zariski-Riemann space} of $F/A$. For some recent articles emphasizing a topological approach to  the Zariski-Riemann space, see \cite{Fin, FFL, FFL2, FFL3, FFS, FS,  ONoeth, ORej, OZR}.

Comparison of the basic opens in the Zarsiki topology on $\X$  with the  topology in Section 3 shows that it is the inverse topology on $\X$ rather than  the Zariski topology that is needed in order to deal with issues of irredundance. The inverse topology is also a  natural one to consider here since under this topology the specialization order on $\X$ agrees with the usual order on $\X$ given by set inclusion. To avoid confusing the two topologies, we denote by $\X^{-1}$ the set $\X$ with the inverse topology. 
Then $\X^{-1}$ is a spectral $C$-representation of $A$, and so all the results of Section 3 can be translated into the context of the Zariski-Riemann space of $F$ by working inside the spectral $C$-representation $\X^{-1}$. 

In the spirit of Section 3, we work throughout this section relative to the set $C$ and consider {\it $C$-representations $X$ of $A$}; that is,  $A = (\bigcap_{V \in X}V) \cap C$, where $X$ is a collection of valuation rings in $\X$. 
 That $C$ need only be a subset in most cases is a byproduct of the approach in  Section 2. 
While we do not have an application for the level of generality that working with a set rather than a ring affords, we do so anyway since it comes at no extra expense.  
When $C = F$, we abbreviate ``$C$-representation'' to ``representation.'' Thus a {\it representation} of $A$ is a subset $X$ of $\X$ such that $A = \bigcap_{V \in X}V$.

In this section $\X^{-1}$ will play the role that $X$ did in Section 3 of an ambient spectral representation.  In this section we use ``$X$'' then for  not necessarily spectral subsets of $\X$. A $C$-representation $X$ of $A$ then is a subspace of $\X^{-1}$. In particular:
\medskip
\begin{adjustwidth}{.5cm}{.5cm}{\it When applying the results of Section 3, the default topology on the $C$-representations of $A$ is the inverse topology. Thus the specialization order coincides with set inclusion among the valuation rings, and the operators ${\min}(-)$ and $\Max(-)$ yield the minimal and maximal elements, respectively, of a collection of valuation rings with respect to set inclusion.}
\end{adjustwidth}


\begin{definition} 
{Let $X \subseteq \X$. We define $\cl(X),\inv(X)$ and $\patch(X)$ to be the closure of $X$ in the Zariski, inverse and patch topologies, respectively. We denote by $\gen(X)$ the set of generalizations of the valuation rings in $X$; that is, \begin{center}$\gen(X) = \{V \in \X:W \subseteq V$ for some $W \in X\}$.\end{center}}  
\end{definition}


We interpret now the results of Section 3 in the setting of the Zariski-Riemann space. 
The notions of irredundance from Definition~\ref{long def}(2) can be simplified for valuation rings. Let $X $ be a subset of $  \X$ such that $A = (\bigcap_{V \in X}V) \cap C$. Then $V \in X$ is {\it irredundant} in the $C$-representation $X$ if $V$ cannot be omitted from this intersection; $V$ is {\it strongly irredundant} if $V$ cannot be replaced in this intersection by a valuation overring\footnote{By an {\it overring} of a domain $R$ we mean a ring between  $R$ and its quotient field.}; and $V$ is {\it tightly irredundant} if $V$ cannot be replaced by an intersection of  valuation overrings that properly contain $V$.

Combining Lemmas~\ref{minimal exist} and~\ref{remove}, and observing that in the notation of Section 2, $\gen(X) = \down X$, we have the following existence result for minimal representations.  
\medskip
\begin{adjustwidth}{.5cm}{.5cm}
{\bf (4.2)} {\it In every inverse closed subset  $X$ of $\X$ there is a  {\it minimal $C$-represent\-a\-tion} of $A$; that is, there exists in $X$ a collection $Z$ of pairwise incomparable valuation rings  such that $\gen(Z)$ is a minimal inverse closed $C$-representation of $A$ and $\patch(Z)$ is  a minimal patch  $C$-representation of $A$.}   
\end{adjustwidth}
\medskip
In general, there can exist infinitely many such minimal $C$-representations of $A$.  This is illustrated by Example~\ref{three}. 

\setcounter{theorem}{2}

\begin{example} \label{three}  In \cite[Example 6.2]{OIrr}, an integrally closed overring $A$ of $K[X,Y,Z]$, with $K$ any field and $X,Y,Z$ indeterminates, is constructed such that $A$ has uncountably many strongly irredundant representations.
Since every valuation overring of $K[X,Y,Z]$ has finite Krull dimension, a valuation ring in a representation of $A$ is strongly irredundant if and only if it is tightly irredundant (see the discussion after (4.4)). 
 Thus by Proposition~\ref{count}, $A$ has uncountably many minimal representations. 
\end{example}



Applying Lemma~\ref{ti},  we have 
\medskip
\begin{adjustwidth}{.5cm}{.5cm}
 {\bf (4.4)} {\it If $A = (\bigcap_{V \in X}V) \cap C$, then $V\in X$ is irredundant in $X$ if and only if $V$ is irredundant in $\patch(X)$; $V$ is tightly irredundant in $X$ if and only if $V$ is irredundant in $\inv(X)$. } 
\end{adjustwidth}
\medskip
 If $V$ has finite Krull dimension, then since there are only finitely many overrings of $V$,  $V$ is strongly irredundant in the $C$-representation $X$ if and only if $V$ is tightly irredundant in $X$.  More generally, if the maximal ideal of $V$ is not the union of the prime ideals properly contained in it, then the notions of strong and tight irredundance coincide for $V$. 
 In particular, if  $V \in X$ has rank one, then $V$ is irredundant in the $C$-representation $X$ if and only if $V$ is irredundant in $\inv(X)$. 


While an irredundant member $V$ of a $C$-representation $X$ is by Lemma~\ref{ti}
an isolated point in the inverse and patch topologies on $Z$, the converse need not be true, as illustrated by Example~\ref{need to add}. However, by restricting to minimal $C$-representations we obtain from Theorem~\ref{surprised} that irredundance is  topological  for such representations.
\medskip
\begin{adjustwidth}{.5cm}{.5cm}
{\bf (4.5)}  {\it Suppose  $X$ is a minimal $C$-representation of $A$, as in (4.2). 
A valuation ring $V \in X$ is irredundant in $X$ if and only if $V$ is strongly irredundant in $X$; if and only if $V$ is isolated in $X$ in the inverse (equivalently, patch) topology. }
\end{adjustwidth}
\medskip

\setcounter{theorem}{5}

\begin{example} \label{need to add} A valuation ring $V$ in a $C$-representation $X$ of $A$ may be isolated in the inverse topology on $X$ but be redundant in $X$. For example, let $A$ be an integrally closed Noetherian local  domain of Krull dimension $>1$, let $X = \{A_P: P$ is a height one prime ideal of $A\}$, and let $V$ be a DVR overring of $A$ that dominates $A$. Write the maximal ideal $M$ of $A$ as $M = (a_1,\ldots,a_n)$. Then $X$ is a subset of the inverse closed set $Y:= \{W \in \X:1/a_i \in W$ for some $i=1,\ldots,n\}$, while $V \not \in Y$. Thus $V$ is an isolated point in $\{V\} \cup X$ with respect to the inverse topology. However,  
since $A = \bigcap_{W \in X}W$, $V$ is redundant in the representation $\{V\} \cup X$ of $A$.  (The notion of a minimal representation remedies this: $X$ is a minimal representation so that $V$ is excluded from consideration since it is not an element of $X$.)   
\end{example}

By Corollary~\ref{first cor}, the existence of a strongly irredundant $C$-representation within a minimal $C$-representation depends only on the topology of the minimal representation: 
\smallskip
\begin{adjustwidth}{.5cm}{.5cm}
{\bf (4.7)} $\:$ {\it Suppose  $X$ is a minimal $C$-representation of $A$, as in (4.2). 
Then  $X$ contains a strongly irredundant $C$-representation $Y$ of $A$  if and only if the set of isolated points in $X$ is dense in $X$ with respect to the inverse topology.} 
\end{adjustwidth}
\medskip
In such a case the only choice for $Y$ is the set of isolated points of $X$, and hence 
there exists at most one such irredundant $C$-representation of $A$ in $X$, hence also in  $\patch(X)$ (Lemma~\ref{ti}).  However, moving outside of $\patch(X)$, Example~\ref{three} shows there can exist infinitely many distinct strongly irredundant $C$-representations of $A$.  
This example involves an intersection of valuation overrings of a three-dimensional Noetherian domain. By contrast, strongly irredundant representations over two-dimensional Noetherian domains are much better behaved and have a number of uniqueness properties \cite{OIrr}.   

 \setcounter{theorem}{7}


One  consequence of the topological approach of Section 3  is an existence result for strongly irredundant $C$-represent\-a\-tions of $A$ in the countable case. This result, which follows from Corollary~\ref{countable case}, is revisited in the next section in Theorem~\ref{countable Prufer}. 
\medskip
\begin{adjustwidth}{.5cm}{.5cm}
 {\bf (4.8)} $\:$ {\it If $A = (\bigcap_{V\in X}V) \cap C$ for some countable patch $X$ in $\X$, then $X$ contains a strongly irredundant $C$-representation of $A$.}
   \end{adjustwidth}
\medskip
It is important here that we work with a countable patch rather than simply a countable subset of $\X$.  This is illustrated by the next example. 

\setcounter{theorem}{8}
\begin{example} 
 Suppose $A$ is a countable integrally closed  local Noetherian domain with maximal ideal $M$ and quotient field $F$. Suppose also that $A$ has Krull dimension $>1$. 
  Let $X$ be the collection of all DVR overrings $V$ of $A$ that are are centered in $A$ on $M$ and such that $V$ is a localization of the integral closure of some finitely generated $A$-subalgebra of $F$. Since $A$ is countable and Noetherian, there are countably many such valuation rings.  Moreover, $A$ is the intersection of the valuation rings in $X$, since if $x \in F \setminus A$, then there exists a valuation ring $V$ in $X$ whose maximal ideal contains $x^{-1}$, so that $x \not \in V$.  
%
 %
If $V \in X$ is an irredundant  representative of $A$, then since the value group of $V$ is a subgroup of the group of rational numbers, $V$ is a localization of $A$ \cite[Lemma 1.3]{HONoeth}, a contradiction to the fact that $A$ has Krull dimension $>1$ and $V$ is centered on the maximal ideal of $A$.  Therefore, although $X$ is countable, $X$ contains no irredundant representatives of $A$. It follows from (4.8) that $X$ is not a patch closed subspace of $\X$. 
%
\end{example}





Adapting the terminology from Section 3, we say 
 a valuation ring $V \in \X$ is {\it $C$-critical for $A$} if $V$ is an element of every inverse closed $C$-representation of $A$. Thus by Proposition~\ref{vacant lemma} and the fact that every integrally closed $A$-subalgebra of $F$ is an intersection of valuation rings in $\X$, we have 
\medskip
\begin{adjustwidth}{.5cm}{.5cm}
{\bf (4.10)} {\it $V$ is $C$-critical for $A$ if and only if whenever  $A_1,\ldots,A_n$ are integrally closed $A$-subalgebras of $F$ such that 
 $A = A_1 \cap \cdots \cap A_n \cap C$, it must be that $A_i \subseteq V$ for some $i$.}
 \end{adjustwidth}
\medskip
Also, from Corollary~\ref{irredundant is si} we see that if $V$ is $C$-critical for $A$  and irredundant in some $C$-representation $X$ of $A$, then $V$ is strongly irredundant in $X$. By restricting to the case where $C$ is an $A$-submodule of $F$, we obtain an important class of $C$-critical valuation rings; these are the valuation rings that play an important role in the next sections.  A valuation ring $V \in \X$ is {\it essential for $A$} if $V = A_P$ for a prime ideal $P$ of $A$. 

\setcounter{theorem}{10}

\begin{proposition}  \label{localization} Let $V \in \X$ such that $C \not \subseteq V$.  If $C$ is an $A$-submodule of $F$ and  $V$ is essential for $A$, then $V$ is  $C$-critical for $A$.
\end{proposition}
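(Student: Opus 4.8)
The plan is to verify the criterion (4.10): it suffices to show that whenever $A = A_1 \cap \cdots \cap A_n \cap C$ with each $A_i$ an integrally closed $A$-subalgebra of $F$, one has $A_i \subseteq V$ for some $i$. Write $V = A_P$ and put $S = A \setminus P$, so that $S^{-1}A = A_P = V$. The guiding idea is that essentiality lets me localize the whole equation at $P$ and reduce everything to the internal structure of the single valuation ring $V$.

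First I would localize. Since $A_1,\ldots,A_n$ are rings containing $A$ and $C$ is an $A$-submodule of $F$, every set in the intersection is an $A$-submodule of $F$. Because localization is flat, it commutes with finite intersections of submodules, so applying $S^{-1}(-)$ to $A = A_1 \cap \cdots \cap A_n \cap C$ inside $F = S^{-1}F$ yields
$$V = (A_1)_P \cap \cdots \cap (A_n)_P \cap C_P,$$
where each $(A_i)_P = S^{-1}A_i$ satisfies $V \subseteq (A_i)_P \subseteq F$ and is therefore a valuation overring of $V$, and $C_P = S^{-1}C$ is a $V$-submodule of $F$. Since $1 \in A \subseteq C$ we get $V \subseteq C_P$, and since $C \not\subseteq V$ there is $c \in C$ with $v(c) < 0$, so $C_P \not\subseteq V$. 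Now if some $(A_i)_P = V$, then $A_i \subseteq (A_i)_P = V$ and we are done, so it remains to rule out that every $(A_i)_P$ strictly contains $V$. The overrings of the valuation ring $V$ form a chain under inclusion, so $W := (A_1)_P \cap \cdots \cap (A_n)_P$ equals the smallest of them; assuming all $(A_i)_P \supsetneq V$ gives $W \supsetneq V$, and then $W \cap C_P = V$.

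The crux is to show this last equation is impossible. Let $v : F^{\ast} \to \Gamma$ be the valuation attached to $V$, and let $\Delta$ be the nonzero convex subgroup of $\Gamma$ corresponding to $W$, so that $x \in W$ exactly when the image of $v(x)$ in $\Gamma/\Delta$ is $\geq 0$. Take $c \in C_P$ with $v(c) < 0$. If $v(c) \in \Delta$, then $c \in W \setminus V$ already, contradicting $W \cap C_P = V$. Otherwise $v(c) \notin \Delta$, and convexity forces the image of $v(c)$ in $\Gamma/\Delta$ to be $< 0$. Fixing $\delta_0 \in \Delta$ with $\delta_0 < 0$ and using surjectivity of $v$, choose $a \in F$ with $v(a) = \delta_0 - v(c)$. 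Convexity then gives $v(a) > 0$, so $a \in V$; hence $ac \in C_P$ because $C_P$ is a $V$-module, while $v(ac) = \delta_0 \in \Delta$ with $\delta_0 < 0$ places $ac$ in $W \setminus V$. This contradicts $W \cap C_P = V$, so some $(A_i)_P = V$ and the corresponding $A_i \subseteq V$, which is what (4.10) requires.

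I expect the main obstacle to be the final paragraph, namely producing the multiplier $a$ and confirming it lands in $V$ while $ac$ lands in $W \setminus V$; this is exactly where both the convexity of $\Delta$ and the hypothesis that $C$ is a module (so that $ac$ remains in $C_P$) are indispensable, and where essentiality of $V$ pays off by making $V$ and $W$ valuation rings, so that overrings form a chain and the value-group description is available. By comparison, the flat-base-change fact that localization commutes with finite intersections is the routine technical enabler of the first step.
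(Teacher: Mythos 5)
Your argument is correct, and its first half coincides with the paper's: both proofs verify the criterion of Proposition~\ref{vacant lemma} (i.e.\ (4.10)) by writing $V=A_P$ and localizing $A=A_1\cap\cdots\cap A_n\cap C$ at $P$ to get $V=(A_1)_P\cap\cdots\cap(A_n)_P\cap C_P$. The divergence is in how $C_P$ is handled. The paper invokes the fact that \emph{all} $V$-submodules of $F$ containing $V$ --- not merely the subrings --- are totally ordered by inclusion; hence $C_P$ itself sits in the chain alongside the $(A_i)_P$, the whole intersection equals its least member, and since $C\not\subseteq V$ forces $C_P\ne V$, that least member must be some $(A_i)_P$, giving $(A_i)_P=V$ and $A_i\subseteq V$ in one stroke. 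You only use the chain property for the ring overrings $(A_i)_P$, and then rule out the residual case $W\cap C_P=V$ with $W\supsetneq V$ by an explicit value-group computation: passing to the convex subgroup $\Delta$ corresponding to $W$ and rescaling an element $c\in C_P$ of negative value by a suitable $a\in V$ so that $ac\in C_P\cap W\setminus V$. That computation is correct (including the case split on whether $v(c)\in\Delta$ and the use of surjectivity of $v$ and convexity of $\Delta$), and it has the virtue of making visible exactly where the hypothesis that $C$ is an $A$-module enters (namely $ac\in C_P$); but it is a longer substitute for the single standard fact that the $V$-submodules of $F$ form a chain, which would have let you treat $C_P$ on the same footing as the $(A_i)_P$ and finish immediately.
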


\begin{proof}  We use Proposition~\ref{vacant lemma} to prove the claim. Let  $P$ be a prime ideal of $A$ such that $A_P = V$, and let $A_1,\ldots,A_n$ be integrally closed $A$-subalgebras of $F$ such that $A = A_1 \cap \cdots \cap A_n \cap C$. Then since localization commutes with finite intersections, we have $V = A_P = (A_1)_P \cap \cdots \cap (A_n)_P \cap C_P$. Since $V$ is  a valuation ring, the set of $V$-submodules  between $V$ and $F$ forms a chain. Therefore,  since $C \not \subseteq V$, there is $i$ such that $A_i \subseteq V$, and hence by Proposition~\ref{vacant lemma},  $V$ is $C$-critical.   
\qed \end{proof}

\begin{example} 
A  valuation overring that is $C$-critical for $A$ need not be essential. Suppose $A$ has quotient field $F$. Then $A$ is said to be {\it vacant} if it has a unique Kronecker function ring \cite{Fab}. (Kronecker function rings are discussed after 4.13.) As we see in  (4.14), this implies that $A$ has a unique minimal representation. Hence $A$ is vacant if and only if every valuation overring of $A$ is critical (see also \cite{Fab}).  As discussed in \cite{Fab} there exist vacant domains that are not Pr\"ufer domains, and hence  such a domain has a critical valuation overring that is not essential.  
\end{example} 

Example~\ref{three}  shows that in general $A$ need not be an intersection of critical valuation overrings; equivalently, $A$ need not have a unique minimal $C$-representation. 
 However, for some  well-studied classes of rings, such as those in the next two sections, it is possible to represent $A$ with critical valuation rings. In this case, strong properties hold for $A$. For example, applying Theorem~\ref{unique}(1), we have the following fact. 
 \medskip
\begin{adjustwidth}{.5cm}{.5cm}
 {\bf (4.13)} {\it Suppose $A = (\bigcap_{V \in {\cal C}(\X)}V) \cap C$, where ${\cal C}(\X)$ is the set of minimal $C$-critical valuation rings in $\X$.    
  If $V \in \X$ is strongly  irredundant in some $C$-representation  of $A$, then $V$ is
in ${\cal C}(\X)$ and  $V$  is strongly irredundant in ${\cal C}(\X)$. }
 \end{adjustwidth}
\medskip
Thus ${\cal C}(\X)$ collects all the strongly irredundant representatives of $A$, and so, as in Theorem~\ref{unique}, having a strongly irredundant representation is a matter of having enough strongly irredundant representatives. 
\medskip
\begin{adjustwidth}{.5cm}{.5cm}
{\bf (4.13)} {\it Suppose $A =  (\bigcap_{V \in {\cal C}(\X)}V) \cap C$, so that ${\cal C}(X)$ is a $C$-representation of $A$. Then
$A$ has a strongly irredundant $C$-represent\-a\-tion  if and only if  $A$ is an intersection of $C$ with valuation rings in the  set 
\smallskip
\begin{center}
$\{V \in \X: V$ {is  strongly irredundant in some} $C$-{representation of} $A\}$.
\end{center} \smallskip
Thus  
$A$ has at most one strongly irredundant $C$-represen\-ta\-tion.}
 \end{adjustwidth}
\medskip

There is a long tradition of using  Kronecker function rings to represent integrally closed rings in the field $F$ with a B\'ezout domain in a transcendental extension $F(T)$ of $F$. We depart from this tradition because of our emphasis on the more general topological approach via spectral representations as in Section 3. However, in the present context of Zariski-Riemann spaces there is a precise connection between  minimal representations and maximal Kronecker function rings. In fact,  
 minimal representations play for us  a role similar to that played by the Kronecker function ring in articles such as \cite{Bre, BM, GHOlber, ONoeth}. We outline this connection here.

 Let  $T$ be an indeterminate for $F$.  For each valuation ring $V \in {\ff X}$, let  $V^*  = V[T]_{{\ff M_V}[T]},$ where ${\ff M}_V$ is the maximal ideal of $V$.  Then $V^*$ is a valuation ring with quotient field $F(T)$ such that $V = V^* \cap F$. For a nonempty subset ${{X}}$ of ${\ff X}$,  the {\it Kronecker function ring of ${{X}}$} is the ring $$\Kr({{X}}) = \bigcap_{V \in {{X}}} V^*.$$  
Then 
 $\Kr(X)$ is a B\'ezout domain with quotient field $F(T)$; cf.~\cite[Corollary 3.6]{FFL}, \cite[Theorem 2.2]{HalKoc} and   
  \cite[Corollary 2.2]{HK}. When $X$ is a $C$-representation of $A$, then $A = \Kr(X) \cap C$, and we say that $\Kr(X)$ is a {\it Kronecker $C$-function ring of $A$}.  
Thus to every $C$-representation of $A$ corresponds a Kronecker $C$-function ring of $A$. 

For each $X \subseteq \X$, let $X^* = \{V^*:V \in X\}$. The mapping $\X \rightarrow \X^*$ is a homeomorphism with respect to the Zariski topology (see \cite[Corollary 3.6]{FFL} or \cite[Proposition 2.7]{HK}), and hence is a homeomorphism in the inverse and patch topologies also.  The subset $X$ is inverse closed in $\X$ if and only if $X^*$ is the set of localizations at prime ideals  of $\Kr(X)$; i.e., $X^*$ is the Zariski-Riemann space of the B\'ezout domain $\Kr(X)$ \cite[Proposition 5.6]{OZR}.  Moreover, we have the following connection between $C$-representations and Kronecker $C$-function rings, which can be deduced from \cite[Corollary 5.8 and Proposition 5.10]{OZR}. 

\medskip
\begin{adjustwidth}{.5cm}{.5cm}
{\bf (4.14)} {\it 
 The inverse closed $C$-representations of $A$ bijectively  correspond  to the Kronecker $C$-function rings of $A$. The minimal $C$-representations of $A$ bijectively  correspond to the maximal Kronecker $C$-function rings of $A$. Moreover, a subset $X$ of $\X$ is a minimal $C$-representation of $A$ if and only if $X^*$ consists of the  localizations at maximal ideals of a maximal Kronecker $C$-function ring of $A$.}
   \end{adjustwidth}


%
%



\section{Generalizations of Krull domains}

In this section we assume the same notation as Section 4. Thus $A$ is an integrally closed subring of the field $F$, $C$ is a set between $A$ and $F$, and $\X$ is the Zariski-Riemann space of $F/A$.  
Intersection representations  play an important role in the theory of {\it Krull domains}, those integral domains that can be represented by a finite character intersection of rank one discrete valuation rings (DVRs). (A subset $X$ of $\X$ has {\it finite character} if each $0 \ne x \in F$ is a unit in all but at most finitely many valuation rings in $X$.)  Finite character representations of a Krull domain $A$ are well understood: The collection $X = \{A_P:$ $P$ a height one prime ideal of $A\}$ is a finite character, irredundant representation of DVRs. 
Krull \cite{Kru} proved more generally that if $A$ has a finite character representation consisting of valuation rings whose value groups have rational rank one, then this collection can be refined to one in which every valuation ring is essential for $A$; see also \cite[Corollary 5.2]{OOlber}. Examples due to Griffin \cite[Section 4]{Gri2}, Heinzer and Ohm \cite[2.4]{HOOlb} and  Ohm \cite[Example 5.3]{OOlber}  show that the same is not true if the value groups of the valuation rings are assumed only to  have rank one rather than rational rank one.  

Griffin defines the ring $A$ to have {\it Krull type} if $A$ has a finite character representation $X$ consisting of essential valuation rings  \cite{Gri2, Gri}. In \cite[Theorem 7]{Gri2} he gives necessary conditions for a ring $A$ having a finite character representation of valuation rings to be a ring of Krull type. Pirtle \cite[Corollary 2.5]{Pir} showed that when in addition the valuations in $X$ have rank one, $X$ is an irredundant representation of $A$.  
More generally, 
Brewer and Mott  \cite[Theorem 14]{BM} prove that if $A$ has a finite character representation $X$ of valuation rings (no restriction on rank), then $A$ has an irredundant finite character representation, and if also the valuation rings in $X$ have rank one,  
 then $A$ has one, and only one, irredundant finite character representation consisting of rank one valuation rings \cite{BM}. In \cite[Theorem 1.1]{Bre}, Brewer proves that if $A$ has Krull type, then $A$ has an irredundant finite character representation $X$ consisting of essential valuation rings, and that $X$ is unique among such representations.  
 
 In both the articles \cite{Bre} and \cite{BM}, the authors prove their results by passing to a maximal Kronecker function ring of $A$ and applying Gilmer and Heinzer's theory of irredundant representations of Pr\"ufer domains  to work out the problem of irredundance in a Pr\"ufer setting. 
This method  of passage to a Kronecker function ring, and hence to a Pr\"ufer domain, is applied in \cite{ORej} to domains $A$ that can be represented with a collection of valuation rings from  a Noetherian subspace of the Zariski-Riemann space, a class of representations that subsumes the finite character ones. 
 In such a case $A$ can be represented by a strongly irredundant Noetherian space of valuation rings \cite[Theorem 4.3]{ORej}.
 The results in \cite{ORej} are in fact framed in terms of $C$-representations, where $C$ is a ring.\footnote{The results in \cite{ORej} also apply to representations consisting of integrally closed rings, not just valuation rings. In light of  Finocchiaro's theorem that the space of integrally closed subrings of $F$ is a spectral space (see Example~\ref{spectral exs}(6)), 
 it seems likely that  this level of generality might be handled with spectral $C$-representations also.}

The introduction of finite character rank one $C$-representations to generalize the theory of Krull domains is due to Heinzer and Ohm  \cite{HOOlb}. This allows for considerable more flexibility in applying results to settings in which one considers, say, integrally closed rings between $A$ and some  integrally closed overring $C$. Even when $A$ is a two-dimensional Noetherian domain and $C$ is chosen a PID, the analysis of the integrally closed rings between $A$ and $C$   is quite subtle; see for example  \cite{AHE, CP, LTOlber, ONoeth, OT}.    Regardless of the choice of $A$ and $C$, Heinzer and Ohm \cite[Corollary 1.4]{HOOlb} prove that finite character rank one $C$-representations   remain as well behaved as in the classical case of $C=F$: If $C$ is a ring and $A$ has a $C$-representation consisting of rank one valuation rings, then $A$ has a unique  irredundant finite character representation consisting of rank one valuation rings.  

In this section we   recover the above results using the topological methods developed in Section 3 and elaborated on in Section 4. 
Whereas in the articles \cite{Bre, BM, ORej} the strategy is to pass to a maximal Kronecker function ring and treat irredundance there, we work instead with the topology of minimal representations to obtain our results.    We also need only that $C$ is a set. 
A $C$-representation $X$ is {\it Noetherian} if $X$ is a Noetherian subspace of $\X$ with respect to the Zariski topology.

\begin{theorem} \label{main Noetherian} If $A = (\bigcap_{V \in X}V)\cap C$, where $X$ is a Noetherian subspace of $\X$ , then $\gen(X)$ contains a Noetherian  strongly irredundant $C$-representation of $A$.   
 \end{theorem}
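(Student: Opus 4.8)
The plan is to realize the desired representation as a \emph{minimal} $C$-representation that sits inside $\gen(X)$, and to read off its two required features --- Noetherianity and strong irredundance --- from the tree structure of $\X$ together with the machinery of Sections~2 and~3. I would begin by recording the structural inputs. First, any $Y$ with $X\subseteq Y\subseteq\X$ is again a $C$-representation of $A$: every member of $\X$ contains $A$, and enlarging the family only shrinks the intersection, so $\gen(X)=\down X$ is a $C$-representation. Second, the valuation overrings of a fixed valuation ring form a chain, so $(\X,\supseteq)$ is a tree (every principal up-set in the inclusion order is totally ordered). Third, since $X$ is Noetherian in the Zariski topology its closed sets satisfy the descending chain condition, so --- exactly as in the opening line of the proof of Lemma~\ref{technical Noetherian} --- the reverse order $(X,\subseteq)$ satisfies the descending chain condition. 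Hence every member of $X$ lies above a $\subseteq$-minimal one, and $Z_0:=\min X$ is an antichain which is still a $C$-representation of $A$ contained in $X\subseteq\gen(X)$.

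The core of the argument is to refine $Z_0$ to a minimal $C$-representation $Z\subseteq\gen(X)$ and to show that $Z$ is at once Noetherian and discrete in the inverse topology. Noetherianity is inherited, since $Z$ is a subspace of a Noetherian space. For inverse-discreteness I would argue directly with the tree: because $Z$ is an antichain, each singleton $\{V\}$ is closed in the Zariski subspace topology on $Z$ (its Zariski closure $\down V$ meets the antichain only in $V$), so $Z\setminus\{V\}$ is Zariski-open, hence quasicompact by Noetherianity, hence inverse-closed in $Z$; thus $V$ is isolated in the inverse topology. This is precisely the mechanism of the forward implication of Theorem~\ref{Noetherian prop}, but carried out locally for $Z$, so that one does \emph{not} need the global hypothesis that $\Max\X$ is Noetherian. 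Finally, because $Z$ is a minimal $C$-representation, Theorem~\ref{surprised} (restated as (4.5)) identifies the isolated points of $Z$ with its irredundant, and in fact strongly irredundant, members; as every point of the inverse-discrete $Z$ is isolated, $Z$ is a strongly irredundant $C$-representation. Being Noetherian and contained in $\gen(X)$, it is the representation sought.

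The step I expect to be the main obstacle is securing a \emph{minimal} $C$-representation inside $\gen(X)$ without sacrificing Noetherianity --- that is, bridging the gap between ``$Z_0=\min X$ is a Noetherian inverse-discrete antichain in $\gen(X)$'' and ``$Z$ is minimal so that Theorem~\ref{surprised} applies.'' The existence result (4.2)/Lemma~\ref{minimal exist} produces minimal representations inside inverse- or patch-closures, and a priori a minimal element of such a closure is only known to be a generalization of a \emph{patch-limit} of $X$, not of a member of $X$ itself; reconciling ``minimal among closed representations'' (Lemma~\ref{remove}) with ``contained in $\down X$'' is where the Zariski-Noetherian hypothesis must do genuine work. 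The tool I would deploy is a finite-intersection argument in the compact patch topology: for $p\in\patch(X)$ the patch-clopen sets $\V(F)$ with $F\subseteq p$ finite all meet $X$, and since a Noetherian space has only finitely many Zariski-irreducible components, its patch closure is correspondingly controlled, which should upgrade this to the existence of a member of $X$ containing $p$ and hence to $\patch(X)\subseteq\gen(X)$. Verifying this containment --- equivalently, that refining $X$ to its minimal generalizations neither leaves $\gen(X)$ nor destroys Noetherianity --- is the crux; the conversions that follow (inverse-discreteness $\Rightarrow$ strong irredundance via minimality, Noetherianity by heredity) are then formal consequences of Theorems~\ref{Noetherian prop} and~\ref{surprised}.
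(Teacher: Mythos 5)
Your overall architecture is the paper's: produce a minimal $C$-representation inside $\gen(X)$, use the tree structure of $\X$ together with the Noetherianity of $\min X$ to see that it is discrete in the inverse topology, and then read off strong irredundance from (4.5)/Theorem~\ref{surprised} and Noetherianity from Lemma~\ref{technical Noetherian}. You have also correctly isolated the crux, namely that the minimal representation supplied by (4.2) must land in $\gen(X)$. But the argument you sketch for that crux points the wrong way. Membership in $\gen(X)$ means \emph{containing} a member of $X$: you must produce $W\in X$ with $W\subseteq p$. The sets $\V(F)$ with $F\subseteq p$ finite instead produce members of $X$ containing ever larger finite subsets of $p$, which at best yields (by patch-compactness) some $W$ with $p\subseteq W$ --- the opposite containment, which is useless here. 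The correct mechanism, which is the cited fact that $\inv(X)=\gen(X)$ for quasicompact $X$ (\cite[Proposition 2.2]{OZR}), is a finite subcover argument in the Zariski topology: if $V\in\inv(X)$ contained no member of $X$, choose $x_W\in W\setminus V$ for each $W\in X$, cover $X$ by the quasicompact Zariski-open sets $\V(x_W)$, extract a finite subcover, and note that $\U(x_{W_1})\cap\cdots\cap\U(x_{W_n})$ is then an inverse-open neighborhood of $V$ disjoint from $X$. Noetherianity of $X$ enters only through quasicompactness; the remark about finitely many irreducible components is not the relevant consequence.

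A second gap: ``Noetherianity is inherited, since $Z$ is a subspace of a Noetherian space'' is unjustified. The minimal representation $Z$ lives in $\gen(X)$, not necessarily in $X$, and $\gen(X)$ need not be Noetherian even when $X$ is (take $X$ a single valuation ring of infinite rank, so that $\gen(X)$ is an infinite descending chain of overrings). Since your inverse-discreteness argument explicitly uses quasicompactness of $Z\setminus\{V\}$, i.e.\ Noetherianity of $Z$, this gap propagates into the main step. The repair is already in Section 2: $\gen(X)=\inv(X)$ is a spectral space in the Zariski topology whose specialization order is a tree and whose maximal elements form the Noetherian space $\min X$, so Lemma~\ref{technical Noetherian} says a subspace is Noetherian if and only if it satisfies the ascending chain condition under specialization --- automatic for the antichain $Z$ --- and Theorem~\ref{Noetherian prop} then gives inverse-discreteness directly, with no need for your ``local'' variant.
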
 

\begin{proof}
Since $X$ is Noetherian, $X$ is quasicompact, and hence $\inv(X) =\gen(X)$ \cite[Proposition 2.2]{OZR}. 
 Thus by (4.2), there exists a minimal $C$-representation $Y$ of $A$ in $\gen(X)$.  By Proposition~\ref{Hoc prop}(1), $\gen(X)$ is a spectral space under the Zariski topology. The elements of $\min X$ are the maximal elements of $\gen(X)$ under the specialization order of the Zariski topology. 
In particular, $\min X \subseteq X$.   
 Thus since $\min X$ is Noetherian in the Zariski topology, 
 Theorem~\ref{Noetherian prop} implies that  $Y$ is discrete in the  inverse topology, so that each valuation ring in $Y$ is an isolated point in $Y$ in the inverse topology. By (4.5), the valuation rings in the minimal $C$-representation $Y$ are strongly irredundant. Also, by Lemma~\ref{technical Noetherian}, $Y$ is  Noetherian in the Zariski topology. 
\qed\end{proof}

\begin{remark} 
 In general, the strongly irredundant $C$-representation in Theorem~\ref{main Noetherian} is not unique. For example, the uncountably many strongly irredundant representations of the  ring $A$ discussed in Example~\ref{three} are Noetherian spaces in the Zariski topology.  The ring $A$ in this case is an overring of a three-dimensional Noetherian domain. By contrast, when $C$ is integrally closed and $A$ is an overring of a two-dimensional Noetherian domain, with $A$ representable by a  Noetherian space of valuation overrings, then $A$ has a unique strongly irredundant $C$-representation \cite[Corollary 5.7]{OIrr}. 
In the case in which $A$ is an overring of a two-dimensional Noetherian domain, the existence of a Noetherian $C$-representation has strong implications for the structure of $A$; see \cite{ONoeth}.
\end{remark}


From the theorem we deduce a corollary that recovers a number of the results discussed at the beginning of the section, with the additional feature that the valuation rings in the representation are strongly irredundant rather than just irredundant. When the valuations are essential, we also obtain uniqueness across all strongly irredundant representations, not just the finite character ones.   

\begin{corollary} \label{FC cor}  If $A = (\bigcap_{V \in X}V)\cap C$, where $X$ is a finite character subset of $\X$, then $\gen(X)$ contains a  strongly irredundant $C$-representation $Y$ of $A$. If also each valuation ring in  $X$ is essential for $A$, then $Y$ is the only strongly irredundant $C$-representation of $A$ in $\X$.  
\end{corollary}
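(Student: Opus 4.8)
The plan is to obtain both halves from the results already in place: existence from Theorem~\ref{main Noetherian}, and uniqueness from Theorem~\ref{unique}. The only genuinely new input is a topological observation that converts the finite character hypothesis into the Noetherian hypothesis required by Theorem~\ref{main Noetherian}.

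First I would show that a finite character subset $X$ of $\X$ is a Noetherian subspace in the Zariski topology. The key point is that for each $0 \ne x \in F$ the set $\{V \in X : x \notin V\}$ is finite, since $x \notin V$ forces $x$ to be a nonunit of $V$ and finite character bounds the number of $V \in X$ in which a given nonzero element is a nonunit. As the basic open sets of the Zariski topology on $\X$ are the $\{V : x_1,\ldots,x_n \in V\}$, the complement in $X$ of such a set is $\bigcup_i \{V \in X : x_i \notin V\}$, a finite set. Hence every proper closed subset of $X$ lies inside the (finite) complement of some nonempty basic open, and so is finite; the closed subsets of $X$ therefore satisfy the descending chain condition and $X$ is Noetherian. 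Theorem~\ref{main Noetherian} then hands us a Noetherian strongly irredundant $C$-representation $Y$ of $A$ inside $\gen(X)$, which settles the first assertion.

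For the second assertion I would show that the essential hypothesis forces ${\cal C}(\X)$ to already be a $C$-representation of $A$, and then quote Theorem~\ref{unique}. Any $V \in X$ with $C \subseteq V$ is redundant, since intersecting $C$ with a superset of $C$ changes nothing, so we may pass to $\{V \in X : C \not\subseteq V\}$ and still have $(\bigcap_{V \in X,\, C \not\subseteq V}V)\cap C = A$. Each remaining $V$ is essential and satisfies $C \not\subseteq V$, so Proposition~\ref{localization}, which requires $C$ to be an $A$-submodule of $F$, shows that $V$ is $C$-critical, i.e.\ $V \in \up {\cal C}(\X)$. Then each such $V$ contains some $W \in {\cal C}(\X)$ under inclusion, whence $\bigcap_{W \in {\cal C}(\X)}W \subseteq V$ for every $V \in X$ with $C \not\subseteq V$; intersecting with $C$ gives $(\bigcap_{W \in {\cal C}(\X)}W)\cap C \subseteq A$, while the reverse inclusion is automatic because every $W \in {\cal C}(\X)$ contains $A$ and $A \subseteq C$. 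Thus $A = (\bigcap_{W \in {\cal C}(\X)}W)\cap C$, so ${\cal C}(\X)$ is a $C$-representation of $A$.

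With ${\cal C}(\X)$ a $C$-representation, Theorem~\ref{unique}(4) applies to the ambient spectral $C$-representation $\X^{-1}$ and yields at most one strongly irredundant $C$-representation of $A$ in $\X$; since the first part already produced one such representation $Y$, it is the unique one. The hard part will be the uniqueness step, and specifically the identification of ${\cal C}(\X)$ as a representation: the existence half is a clean reduction to Theorem~\ref{main Noetherian}, but uniqueness hinges on Proposition~\ref{localization} to convert ``essential'' into ``critical.'' Indeed, for a genuinely arbitrary set $C$ an essential valuation ring need not be critical, so it is precisely the module structure of $C$ (making localization commute with the relevant finite intersections) that drives the uniqueness conclusion.
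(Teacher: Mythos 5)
Your proposal is correct and follows essentially the same route as the paper: finite character gives a Noetherian subspace (the paper cites \cite[Proposition 3.2]{ORej} where you prove this directly, with the same argument), existence then comes from Theorem~\ref{main Noetherian}, and uniqueness comes from Proposition~\ref{localization} making the members of $X$ not containing $C$ critical, so that ${\cal C}(\X)$ is a $C$-representation and Theorem~\ref{unique}(4) (equivalently (4.13)) applies. Your observation that this step tacitly uses that $C$ is an $A$-submodule of $F$ is accurate and applies equally to the paper's own proof.
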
 

\begin{proof}
A finite character collection of valuation rings in $\X$ is Noetherian in the Zariski topology \cite[Proposition 3.2]{ORej}, so by Theorem~\ref{main Noetherian}, $\gen(X)$ contains  a strongly irredundant  $C$-represent\-a\-tion $Y$. 
If also  every valuation ring in $X$ is essential for $A$, then every valuation ring in $\X$ that does not contain $C$ is $C$-critical for $A$ (Proposition~\ref{localization}), so the assertion of uniqueness  follows from (4.13). 
\qed\end{proof}

If $A$ has quotient field $F$ and $A = V \cap R$, where $V$ is a rational valuation overring of $A$ (i.e., $V$ has  value group isomorphic to a  subgroup of the rational numbers) and $R$ is a subring of $F$ properly containing $A$, then $V = A_{P}$, where $P$ is the prime ideal of $A$ that is contracted from the maximal ideal of $V$ \cite[Lemma 1.3]{HONoeth}. 
Applying this to the setting of Corollary~\ref{FC cor}, we recover the result of Krull discussed at the beginning of the section, but strengthened to guarantee uniqueness across all strongly irredundant representations. 

\begin{corollary} \label{finite cor3} Suppose $C$ is a ring and  $A = (\bigcap_{V \in X}V)\cap C$, where $X$ is a finite character  representation of $A$ consisting of valuation rings of rational rank one. 
Then $X$ contains a strongly irredundant  $C$-representation $Y$ of $A$, and $Y$  is the only strongly irredundant $C$-representation of $A$.
\end{corollary}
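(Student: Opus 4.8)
The plan is to reduce Corollary~\ref{finite cor3} to Corollary~\ref{FC cor} by upgrading the hypothesis ``rational rank one'' to the hypothesis ``essential'' needed for the uniqueness clause of Corollary~\ref{FC cor}. First I would observe that the existence of a strongly irredundant $C$-representation $Y$ of $A$ inside $\gen(X)$ is already immediate from Corollary~\ref{FC cor}, since a finite character representation is in particular a finite character subset of $\X$. So the entire content of the corollary beyond this is the uniqueness assertion, and for that the key is to show that each valuation ring $V \in X$ is essential for $A$, i.e.\ that $V = A_P$ for some prime $P$ of $A$. Once essentiality is in hand, the uniqueness conclusion is exactly what Corollary~\ref{FC cor} delivers.

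The essentiality step is where the cited lemma \cite[Lemma 1.3]{HONoeth}, quoted in the paragraph preceding the statement, does the work. For a fixed $V \in X$ of rational rank one, I would write $A = V \cap R$, where $R = (\bigcap_{W \in X, W \ne V}W) \cap C$ is the intersection of $C$ with the remaining valuation rings in the representation. The point is that $R$ is a subring of $F$ (here it is essential that $C$ is assumed to be a ring, which is precisely why this hypothesis is added in this corollary but not in Corollary~\ref{FC cor}) and that $R$ properly contains $A$: if $R = A$ then $V$ would be redundant, and in any case $A \subsetneq R$ because $A \subsetneq V$ forces $A$ to be a proper intersection. With $V$ a rational (rational rank one) valuation overring and $R$ a subring properly containing $A$, the lemma gives $V = A_P$, where $P$ is the contraction to $A$ of the maximal ideal of $V$. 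Applying this to each $V \in X$ shows every member of $X$ is essential for $A$.

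The one genuinely delicate point, which I expect to be the main obstacle, is verifying that $A \subsetneq R$ for each individual $V$, rather than merely $A = V \cap R$ with possibly $R = A$. If $R = A$ then $V$ is redundant and the lemma does not apply, yet we still want the conclusion. I would handle this by noting that we may first pass to the strongly irredundant representation $Y$ produced by Corollary~\ref{FC cor}, in which no member is redundant, so for each $V \in Y$ the complementary intersection $R$ strictly contains $A$ and the lemma applies, giving that every valuation ring in $Y$ is essential. Alternatively, and more cleanly, I would argue that since the valuation rings of $X$ all have rational rank one, Proposition~\ref{localization} can be invoked directly once essentiality is established for the members that survive in $Y$; the redundant members of $X$ play no role in the uniqueness statement, which concerns only the strongly irredundant representation. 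Thus the essentiality of the relevant valuation rings, together with Proposition~\ref{localization}, shows that every valuation ring in $\X$ not containing $C$ is $C$-critical for $A$, and then (4.13) yields that $Y$ is the unique strongly irredundant $C$-representation of $A$.

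To assemble the argument: I would first cite Corollary~\ref{FC cor} to obtain a strongly irredundant $C$-representation $Y \subseteq \gen(X)$ of $A$; then establish via \cite[Lemma 1.3]{HONoeth} that the rational rank one valuation rings appearing in the minimal representation are localizations of $A$, hence essential; then apply Proposition~\ref{localization} to conclude these valuation rings are $C$-critical; and finally invoke (4.13) to conclude uniqueness of the strongly irredundant $C$-representation across all of $\X$, not merely within $\gen(X)$ or among finite character representations. The novelty relative to Krull's classical result is precisely this last strengthening to uniqueness across \emph{all} strongly irredundant representations, which the topological framework of (4.13) makes available once $C$-criticality is verified.
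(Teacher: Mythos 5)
Your proposal matches the paper's own (implicit) argument: the corollary is obtained exactly by combining Corollary~\ref{FC cor} with the observation, via \cite[Lemma 1.3]{HONoeth}, that each irredundant rational rank one member $V$ of the representation satisfies $A=V\cap R$ with $R\supsetneq A$ a ring (here the hypothesis that $C$ is a ring enters), hence is a localization of $A$ and essential, after which Proposition~\ref{localization} and (4.13) give uniqueness across all strongly irredundant $C$-representations. The only detail worth adding is that, because the members of $X$ have rank one, $\gen(X)\setminus\{F\}=X$ and $F$ is never irredundant, so the strongly irredundant representation that Corollary~\ref{FC cor} produces inside $\gen(X)$ does in fact lie in $X$, as the statement asserts.
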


\begin{remark} \label{HO remark}
Heinzer and Ohm prove a version of
 Corollary~\ref{finite cor3} for finite character $C$-representations $X$ of $A$ when $A$ consists of rank one valuation  rings (in their terminology, {\it $A$ is a $C$-domain of finite real character}). Thus their approach includes rank one valuation rings with irrational value group also. Unlike rational valuation rings, such valuation rings can be strongly irredundant in $X$ but not essential for $A$; see \cite[Section 2]{HOOlb}. They prove that if $A$ has quotient field $F$, $C$ is a ring and  $A = (\bigcap_{V \in X}V)\cap C$, where $X$ is a finite character subset of $\X$ consisting of rank one valuation rings, then any valuation ring that is irredundant in some $C$-representation of $A$ is a member of every finite character $C$-representation of $A$ that consists of rank one valuation rings, and the collection of all such valuation rings is a $C$-representation of $A$ \cite[Corollary 1.4]{HOOlb}.   It seems plausible that our approach can recover this result  also, but more information is needed about $C$-representations.  
 \end{remark}



  \section{Pr\"ufer ${\pmb{v}}$-multiplication domains}

 We assume throughout this section that $A$ is an integrally closed domain a quotient field $F$, and that $\X$ is the Zariski-Riemann space of $F/A$. We no longer work with an intermediate set $C$, or more precisely, we work with $C = F$. Hence we drop $C$ from our usual terms such as ``$C$-representation'' and ``$C$-critical'' and simply write ``representation'' for ``$F$-representation'' and ``critical'' for ``$F$-critical.''

 The concept  of  a Pr\"ufer $v$-multiplication domain encompasses that of a  Krull domain and a Pr\"ufer domain, as well as  polynomial rings over these domains.  In this section we apply the point of view developed in Section 4 to the issue of irredundance in representations of Pr\"ufer $v$-multiplication domains.  While the results in this section shed additional light on the domains of  Krull type considered in the last section, the real impetus for the section comes from the theory of Pr\"ufer domains. We use the topological methods of Sections 3 and 4 to recover irredundance results for this class of rings as well as generalize them 
to Pr\"ufer $v$-multiplication domains.

For an ideal  $I$  of  $A$, let $I_v = (A:_F (A:_F :I))$.  
An integral domain $A$ is a {\it $v$-domain} if whenever $I,J,K$ are ideals of $A$ such that $(IK)_v = (JK)_v$, then $I_v = J_v$.  Examples of such domains include completely integrally closed domains and Pr\"ufer $v$-multiplication domains; for a recent survey of  this class of rings, see \cite{FZ}.  A $v$-domain $A$ has a unique maximal Kronecker function ring \cite[Theorem 28.1]{Gil}, so by (4.14), $A$ has a unique minimal representation. In particular, $A$ is an intersection of its critical valuation rings.

\begin{theorem} \label{v theorem} A $v$-domain  has at most one strongly irredundant representation. 

\end{theorem}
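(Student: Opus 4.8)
The plan is to derive Theorem~\ref{v theorem} from the machinery of Section 3, specifically from Theorem~\ref{unique}, by verifying that a $v$-domain satisfies the hypothesis that its critical valuation rings form a representation. The key observation, already recorded in the text preceding the statement, is that a $v$-domain $A$ has a unique maximal Kronecker function ring, and hence by (4.14) a unique minimal representation; consequently $A$ is an intersection of its critical valuation rings. In the language of Section 3, working inside the ambient spectral representation $\X^{-1}$ with $C = F$, this says precisely that $A = \bigcap_{V \in {\cal C}(\X)}V$, i.e.\ that ${\cal C}(\X)$ is a representation of $A$.

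First I would invoke Theorem~\ref{unique} with $X = \X^{-1}$ and $C = F$. That theorem asserts that $A = (\bigcap_{B \in {\cal C}(X)}B) \cap C$ holds if and only if $A$ has a unique minimal $C$-representation, and that when this holds, conclusion (4) gives that $X$ contains \emph{at most one} strongly irredundant $C$-representation of $A$. So the entire proof reduces to checking the equivalent condition, namely that ${\cal C}(\X)$ is a representation of $A$. This is exactly what the preamble establishes: the unique-maximal-Kronecker-function-ring property transfers, via the correspondence (4.14) between minimal representations and maximal Kronecker function rings, into the statement that $A$ has a unique minimal representation, equivalently that $\up {\cal C}(\X)$ is itself a representation.

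The argument then assembles as follows. By \cite[Theorem 28.1]{Gil}, a $v$-domain $A$ has a unique maximal Kronecker function ring. By (4.14), the minimal $C$-representations of $A$ (with $C = F$) correspond bijectively to the maximal Kronecker function rings, so $A$ has a unique minimal representation. By the first equivalence in the proof of Theorem~\ref{unique}, the existence of a unique minimal representation is equivalent to ${\cal C}(\X)$ being a representation of $A$, i.e.\ $A = \bigcap_{V \in {\cal C}(\X)}V$. Theorem~\ref{unique}(4) then yields that $A$ has at most one strongly irredundant representation, which is the assertion.

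I expect the only genuine subtlety to be bookkeeping rather than mathematical obstacle: one must be careful that the specialization order on $\X^{-1}$ is set inclusion (so that ${\cal C}(\X)$ as defined in Section 3 coincides with the minimal critical valuation rings as used in Section 4), and that the hypotheses of Theorem~\ref{unique} are applied with the correct identification $C = F$. Since the text has already packaged the $v$-domain condition into the statement ``$A$ is an intersection of its critical valuation rings,'' the main content is simply recognizing that this is precisely the trigger hypothesis of Theorem~\ref{unique}, after which conclusion (4) delivers the uniqueness immediately.
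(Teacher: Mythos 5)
Your proposal is correct and follows essentially the same route as the paper: the paper's proof also reduces to the observation that a $v$-domain is an intersection of its critical valuation overrings (via the unique maximal Kronecker function ring and (4.14)) and then applies the specialization of Theorem~\ref{unique} to $C=F$ recorded in Section 4. You have merely unpacked the citation chain one level further, going directly to Theorem~\ref{unique}(4), which is exactly where the paper's cited statement comes from.
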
  

\begin{proof}  Since a $v$-domain $A$ is an intersection of critical valuation overrings, we may apply (4.14) in the case where $C = F$ to obtain the theorem. 
\qed\end{proof}

\begin{remark} In \cite[p.~310]{GHOlber}, Gilmer and Heinzer ask whether it is the case that if $A$ is a $v$-domain that is an irredundant intersection of valuation rings,  then the unique maximal Kronecker function ring of $A$ is an irredundant intersection of valuation rings. We can answer this question in the affirmative under the stipulation that $A$ is represented as a {\it strongly} irredundant intersection of valuation overrings.  In this case, by (4.13), each strongly irredundant representative of $A$ is contained in the minimal representation ${\cal C}$ of $A$ consisting of the minimal critical valuation rings for $A$.  By (4.14),  $\Kr({\cal C})$ is the unique maximal Kronecker function ring of $A$.  Since ${\cal C}$ is a minimal representation of $A$, (4.7) implies ${\cal C}$ contains a dense set of isolated points in the inverse topology. Thus ${\cal C}^*$ has a dense set of isolated points, so that by (4.7), $\Kr({\cal C})$ has a strongly irredundant representation. 
\end{remark}

For each ideal $I$ of $A$, let $I_t = \sum_J J_v$, where $J$ ranges over the finitely generated ideals of $A$ contained in $I$. An ideal $I$ of $A$ is a {\it $t$-ideal} if $I = I_t$. If $I$ is maximal among $t$-ideals, then $I$ is a {\it $t$-maximal ideal}. A $t$-ideal $I$ is a {\it $t$-prime ideal} if $I$ is prime. A $t$-maximal ideal is a prime, hence $t$-prime, ideal.  The set of $t$-prime ideals is denoted  $t${\rm-Spec}\,$A$, while the set of $t$-maximal ideals is denoted $t${\rm-Max}\,$A$.
    The domain $A$ is a {\it Pr\"ufer $v$-multiplication domain (P$v$MD)} if  every nonzero finitely generated ideal  of $A$
 is $t$-invertible; equivalently, $A_M$ is a valuation domain for each $M \in    t${\rm-Max}\,$A$ \cite[Theorem 3.2]{Kang}.  Every P$v$MD $A$ is an {\it essential domain}, meaning that $A $ is an intersection of essential valuation overrings. 

In a recent article, Finocchiaro and Tartarone \cite[Corollary 2.6]{FT} show that an essential domain is a P$v$MD if and only if the set of  its essential valuation overrings   is patch closed.  We use this characterization to interpret P$v$MDs in terms of critical valuation rings. 

\begin{lemma} \label{new PVMD} 
The domain $A$ is a P$v$MD if and only if 
$A$ is an essential domain for which every critical valuation ring of $A$ is essential. 
\end{lemma}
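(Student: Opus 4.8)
The plan is to prove both directions using the characterization of Finocchiaro--Tartarone, that an essential domain is a P$v$MD if and only if its set of essential valuation overrings is patch closed. Throughout I would exploit the fact, noted just before the lemma, that every P$v$MD (indeed every $v$-domain) is an intersection of its critical valuation rings, and that $A$ being a P$v$MD gives $A_M$ a valuation ring for each $M \in t\mbox{-Max}\,A$.

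For the forward direction, suppose $A$ is a P$v$MD. Then $A$ is by definition essential, so the first assertion is immediate. It remains to show that every critical valuation ring of $A$ is essential. First I would identify the essential valuation overrings concretely: since $A$ is a P$v$MD, $E := \{A_M : M \in t\mbox{-Max}\,A\}$ is a set of essential valuation rings, and by the Finocchiaro--Tartarone theorem the full set of essential valuation overrings is patch closed, hence an inverse closed (indeed patch closed) representation of $A$. The key step is that Proposition~\ref{localization} (with $C = F$) tells us each essential valuation overring not equal to $F$ is critical; conversely I must show there are no \emph{extra} critical valuation rings beyond the essential ones. For this I would argue that since the set of essential valuation overrings is already a (patch closed, hence inverse closed) representation of $A$, and the critical valuation rings are exactly those lying in \emph{every} inverse closed representation, any critical $V$ must lie in this representation; thus $V$ is essential. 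This is the heart of the forward direction.

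For the converse, suppose $A$ is an essential domain for which every critical valuation ring is essential. Since $A$ is a $v$-domain (being essential, $A$ is completely integrally closed enough to have a unique minimal representation via (4.14)) --- more carefully, I would either invoke that essential domains are $v$-domains or avoid this by working directly --- $A$ is the intersection of its critical valuation rings, so $A = \bigcap_{V \in {\cal C}(\X)} V$ where every member of ${\cal C}(\X)$ is essential by hypothesis. Then the set of essential valuation overrings of $A$ contains ${\cal C}(\X)$ and is itself contained in the collection of critical rings' generalizations; the plan is to show this set is patch closed and thereby apply Finocchiaro--Tartarone in the reverse direction to conclude $A$ is a P$v$MD. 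The cleanest route is to show that the set of essential valuation overrings coincides with a patch closed set built from ${\cal C}(\X)$, or directly that it equals the set of critical rings together with their generalizations that happen to be essential.

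I expect the main obstacle to be pinning down the exact relationship between the critical valuation rings and the essential valuation overrings precisely enough to invoke patch-closedness. In particular, the delicate point is that ``essential'' is an algebraic condition (being a localization $A_P$) whereas ``critical'' and ``patch closed'' are the topological/representation-theoretic conditions, and matching them requires care: I must ensure that in the converse direction the hypothesis ``every critical ring is essential'' genuinely forces the \emph{entire} patch of essential valuation overrings (not just the minimal critical ones) to behave well, and conversely that no non-essential critical ring can sneak in under the P$v$MD hypothesis. I would lean on Proposition~\ref{localization}, on the characterization of critical rings in (4.10), and on the Finocchiaro--Tartarone patch-closedness criterion to bridge this algebraic--topological gap.
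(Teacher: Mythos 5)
Your overall strategy (Finocchiaro--Tartarone in both directions, Proposition~\ref{localization}, and the definition of critical rings) is the same as the paper's, but there is a genuine gap at the crux of the forward direction. You assert that the set $E$ of essential valuation overrings is ``patch closed, hence inverse closed.'' That implication goes the wrong way: since the patch topology refines the inverse topology, inverse closed sets are patch closed, but a patch closed set need not be inverse closed (a single non-minimal valuation ring is patch closed but its inverse closure is the set of all its overrings). Since critical valuation rings are by definition those lying in every \emph{inverse} closed representation, you cannot conclude that a critical ring lies in $E$ until you know $E$ is inverse closed. The paper supplies the missing step: every overring of an essential valuation ring $A_P$ is again a localization of $A$, so $\gen(E)=E$; combined with Proposition~\ref{Hoc prop}(3) (the inverse closure of $Y$ is $\down(\widetilde{Y})$) and the patch closedness from Finocchiaro--Tartarone, this yields that $E$ is inverse closed, and the forward direction then finishes as you intended.

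The converse as you sketch it is also not yet a proof, and your detour through ``$A$ is a $v$-domain'' and (4.14) is both unjustified as written and unnecessary. The clean route, which is the paper's: Proposition~\ref{localization} with $C=F$ gives that every essential valuation overring is critical, and your hypothesis gives the reverse inclusion, so $E$ \emph{equals} the set of critical valuation rings; that set is an intersection of inverse closed representations, hence inverse closed, hence patch closed, and Finocchiaro--Tartarone (applicable because $A$ is assumed essential) gives that $A$ is a P$v$MD. You correctly identified the algebraic--topological matching of ``essential'' and ``critical'' as the delicate point, but the resolution is the exact equality $E={\cal C}$ rather than the containments you describe.
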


\begin{proof}
Since $A$ is a P$v$MD, $A$ is an essential  domain. By \cite[Corollary 2.6]{FT}, the set $E$ of essential valuation rings  is patch closed in the Zariski-Riemann space of $A$.  If $V \in E$, then so is every overring of $V$, so  $\gen(E) = E$. Thus Proposition~\ref{Hoc prop}(3) implies that $E$ is an inverse closed representation of $A$, and hence  every critical valuation ring of $A$ is in $E$. 

Conversely, suppose every  critical valuation ring of $A$ is essential.  Then by Proposition~\ref{localization},  the set $E$ of essential valuation rings is equal to the set of critical valuation overrings of $A$.  Therefore,  $E$ is inverse closed, hence patch closed, so that by \cite[Corollary 2.6]{FT}, $A$ is  a P$v$MD.  
\qed\end{proof}


\begin{theorem} \label{Prufer} Suppose $A$ is a P$v$MD. Let $V$ be a valuation overring of $A$, and let $P$ be the center of $V$ in $A$.  Then the following are equivalent.
\begin{description}[(3)]

\item[{\em (1)}]   $V$ is strongly irredundant  in some representation of $A$. 

\item[{\em (2)}] $V = A_P$, $P \in t${\rm-Max}\,$A$ and $P$ is  isolated in $t${\rm-Max}\,$A$  in the inverse topology.

\item[{\em (3)}]   $V = A_P$, $P \in t${\rm-Max}\,$A$ and $P$ contains  
 a finitely generated ideal that is not contained in any other $t$-maximal ideal.

  \end{description}
  \end{theorem}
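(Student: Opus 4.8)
The plan is to run everything through the unique minimal representation of $A$. Since a Pr\"ufer $v$-multiplication domain is a $v$-domain, Theorem~\ref{v theorem} and the discussion preceding it give that $A$ is the intersection of its critical valuation rings and that ${\cal C}(\X)$ is the unique minimal representation of $A$. By Lemma~\ref{new PVMD} the critical valuation rings of $A$ coincide with the essential ones, and combining this with the standard structure of a P$v$MD (the essential valuation overrings are the localizations $A_Q$ at $t$-primes $Q$, and $A=\bigcap_{M\in t\text{-}\mathrm{Max}\,A}A_M$) I would identify ${\cal C}(\X)$ with $\{A_M:M\in t\text{-}\mathrm{Max}\,A\}$: these are precisely the minimal (smallest) critical valuation overrings, each $A_M$ being critical by Proposition~\ref{localization}. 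The center map $c\colon V\mapsto \mathfrak{m}_V\cap A$ then restricts to a bijection $c\colon {\cal C}(\X)\to t\text{-}\mathrm{Max}\,A$, $A_M\mapsto M$.

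The technical core is to upgrade this bijection to a homeomorphism in the Zariski (spectral) topology, after which it is automatically a homeomorphism for the inverse and patch topologies by Proposition~\ref{Hoc prop}. Continuity of $c$ is routine, since for $f\in A$ one has $c^{-1}(\{Q\in\Spec A:f\notin Q\})=\V(1/f)$; continuity of the inverse map $M\mapsto A_M$ follows from the colon-ideal identity $x\in A_M\iff (A:_A x)\not\subseteq M$, where $(A:_A x)=\{a\in A:ax\in A\}$, which shows that $\V(x)\cap{\cal C}(\X)$ pulls back to the Zariski-open set $\{M\in t\text{-}\mathrm{Max}\,A:(A:_A x)\not\subseteq M\}$. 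The same identity, combined with quasicompactness, is what lets me match quasicompact opens on the two sides: a basic quasicompact open $\V(x)\cap{\cal C}(\X)$ corresponds to $\{M:(A:_A x)\not\subseteq M\}$, which by quasicompactness equals $\{M:I\not\subseteq M\}$ for some finitely generated $I\subseteq (A:_A x)$. Hence the quasicompact opens of $t\text{-}\mathrm{Max}\,A$ are exactly the sets $\{M\in t\text{-}\mathrm{Max}\,A:I\not\subseteq M\}$ with $I$ finitely generated.

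With these identifications the equivalences fall out. For (1)$\Leftrightarrow$(2): by (4.13) a valuation ring is strongly irredundant in some representation of $A$ if and only if it lies in ${\cal C}(\X)$ and is strongly irredundant there, and since ${\cal C}(\X)$ is the minimal representation, (4.5) identifies the latter with being isolated in ${\cal C}(\X)$ in the inverse topology. Transporting across the homeomorphism, this says exactly that $V=A_P$ with $P\in t\text{-}\mathrm{Max}\,A$ and $P$ isolated in $t\text{-}\mathrm{Max}\,A$ in the inverse topology, which is (2). For (2)$\Leftrightarrow$(3): by the definition of the inverse topology the singleton $\{P\}$ is inverse-open precisely when it is the complement in $t\text{-}\mathrm{Max}\,A$ of a quasicompact spectral-open set, i.e. when $\{M\in t\text{-}\mathrm{Max}\,A:I\subseteq M\}=\{P\}$ for some finitely generated $I$; unwinding this says $I\subseteq P$ and $I\not\subseteq M$ for every other $t$-maximal ideal $M$, which is (3).

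The main obstacle I expect is the identification step of the first paragraph together with the quasicompactness argument of the second. Pinning down that ${\cal C}(\X)=\{A_M:M\in t\text{-}\mathrm{Max}\,A\}$ rests on the P$v$MD-specific fact that every critical (essential) valuation ring lies above some $A_M$, so that the minimal critical rings are exactly the $t$-maximal localizations; and extracting a finitely generated ideal $I$ from the colon ideal $(A:_A x)$ -- so that statement (3) genuinely concerns finitely generated ideals rather than arbitrary ones -- is precisely where the spectral-space quasicompactness of $t\text{-}\mathrm{Max}\,A$ must be invoked. Everything else is a direct application of Lemma~\ref{new PVMD}, Proposition~\ref{localization}, (4.13), (4.5), and Proposition~\ref{Hoc prop}.
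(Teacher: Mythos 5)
Your proposal is correct and follows essentially the same route as the paper: identify the minimal critical valuation rings with $\{A_M:M\in t\text{-Max}\,A\}$ via Lemma~\ref{new PVMD} and Proposition~\ref{localization}, transport the problem to $t\text{-Max}\,A$ through the center homeomorphism, and then apply (4.13) and (4.5) for (1)$\Leftrightarrow$(2) and a quasicompactness argument for (2)$\Leftrightarrow$(3). The only cosmetic difference is that you prove the center map is a homeomorphism by hand via the colon-ideal identity, where the paper simply cites Zariski--Samuel for continuity and closedness.
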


  \begin{proof} (1) $\Rightarrow$ (2) Suppose $V$ is strongly irredundant in some representation  of $A$. Since $A$ is a P$v$MD, 
 $A$ is an essential domain, and hence $A$ is an intersection of its critical valuation rings. Therefore, by (4.13), $V$ is  in the collection ${\cal C}$ of the valuation rings that are minimal among critical valuation rings for $A$.
 By Lemma~\ref{new PVMD}, the set of 
critical valuation rings for $A$ 
  coincides with the set of essential valuation overrings   of $A$.   Thus $V = A_P$, and 
 since $V \in {\cal C}$, $P  \in t${\rm-Max}\,$A$.    By (4.13), $V$ is strongly irredundant in ${\cal C}$, so that by (4.5), $V$ is isolated in ${\cal C}$ with respect to the inverse topology.  
Since ${\cal C} = \{A_Q:Q \in t${\rm-Max}\,$A\}$, the map  that sends a valuation overring of $A$ to its center in $A$ restricts to a  homeomorphism from ${\cal C} $ onto $t${\rm-Max}\,$A$. (That this map is continuous and closed follows from \cite[Lemma 5, p.~119]{ZS}.)
 Thus 
 $P$ is a $t$-maximal ideal of $A$ that is isolated in the inverse topology of $t${\rm-Max}\,$A$.  
 
 (2) $\Rightarrow$ (3) By (2), 
 there is a Zariski quasicompact open subset of $t${\rm-Max}\,$A$ whose complement in $t${\rm-Max}\,$A$ is $\{P\}$. Statement (3) now follows.
 
 (3) $\Rightarrow$ (2) This is clear.

  (2) $\Rightarrow$ (1) As in the proof that (1) implies (2), the canonical map  ${\cal C} \rightarrow t${\rm-Max}\,$A$ is a homeomorphism, so 
  we conclude that $V$ is isolated in the inverse topology on ${\cal C}$.  Since ${\cal C}$ is a minimal representation of $A$,  (1) follows from (4.5). 
  \qed\end{proof}

\begin{remark} \label{GH remark}   If $V$ is a valuation overring of the domain $A$ that is irredundant in some representation of $A$ as an intersection of valuation overrings and $V$ is essential, then, as discussed after (4.10), $V$ is strongly irredundant. Thus when $A$ is a Pr\"ufer domain, $V$ is irredundant in a representation of $A$ if and only if it is strongly irredundant. Therefore, when $A$ is a Pr\"ufer domain, ``strongly irredundant'' can be replaced by ``irredundant'' in Theorem~\ref{Prufer}(1). With this replacement, the theorem recovers a  characterization of irredundant representatives of Pr\"ufer domains due to Gilmer and Heinzer; cf.
 \cite[Proposition 1.4 and Lemma 1.6]{GHOlber}.   
\end{remark} 
  
  In light of Remark~\ref{GH remark}, the next corollary is a version of  \cite[Theorem 1.10]{GHOlber} for 
 P$v$MDs. 
  
  \begin{corollary}
Suppose $A$ is a  P$v$MD. Then $A$ can be represented (uniquely) as a strongly irredundant intersection of valuation overrings  if and only if there is a collection $X$ of $t$-maximal ideals of $A$ such that
\begin{description}[(3)] 
\item[{\em (a)}]  each nonzero element of $A$ is contained in  
 a member of $X$, and
\item[{\em (b)}] each $P \in X$ contains  a finitely generated ideal that is not contained in  any other $t$-maximal ideal of $A$. 
\end{description}
\end{corollary}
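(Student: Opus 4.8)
The plan is to run the corollary through Theorem~\ref{Prufer} together with the uniqueness machinery of Section~4. Write $X_0$ for the set of $P \in t${\rm-Max}\,$A$ satisfying the equivalent conditions (2) and (3) of Theorem~\ref{Prufer}; by that theorem the valuation overrings of $A$ that are strongly irredundant in some representation are exactly the localizations $A_P$ with $P \in X_0$, and condition (b) on a collection $X$ says precisely that $X \subseteq X_0$. Since $A$ is a P$v$MD it is a $v$-domain and hence has a unique minimal representation, which by Lemma~\ref{new PVMD} is ${\cal C} = \{A_P : P \in t${\rm-Max}\,$A\}$; this is what lets me invoke (4.13) and (4.7). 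By (4.13), $A$ has a strongly irredundant representation if and only if $A = \bigcap_{P \in X_0}A_P$, and Theorem~\ref{v theorem} makes such a representation unique, accounting for the parenthetical ``uniquely.'' Because (b) forces $X \subseteq X_0$ while $X = X_0$ always satisfies (b), the existence of a collection $X$ with (a) and (b) is equivalent to $X_0$ itself satisfying the covering condition (a). Thus the whole corollary reduces to the single equivalence
$$ A = \bigcap_{P \in X_0}A_P \quad\Longleftrightarrow\quad \text{every nonzero nonunit of } A \text{ lies in some } P \in X_0 $$
(units lie in no $t$-maximal ideal, so this is the exact content of (a)).

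The forward implication is immediate. Assuming $A = \bigcap_{P \in X_0}A_P$, let $a$ be a nonzero nonunit of $A$. Then $1/a \notin A$, so $1/a \notin A_P$ for some $P \in X_0$; as $A_P$ is a valuation ring this gives $a \in PA_P \cap A = P$, which is (a). Hence the forward direction costs nothing beyond the valuation property of the $A_P$, and together with Theorem~\ref{v theorem} it already yields the asserted uniqueness.

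The reverse implication is the heart of the matter, and the step I expect to be the main obstacle. Assuming $X_0$ satisfies (a), I must prove $A = \bigcap_{P \in X_0}A_P$, i.e.\ that $\{A_P : P \in X_0\}$ meets $\U(d)$ for every $d \in F \setminus A$. Given such a $d$, the conductor $I = (A:_A d)$ equals $(A+Ad)^{-1}$; since $A+Ad$ is finitely generated it is $t$-invertible in the P$v$MD $A$, so $I$ is $t$-invertible and therefore $t$-finite, say $I = (I_0)_t$ with $I_0$ finitely generated, and for $P \in t${\rm-Max}\,$A$ one has $d \notin A_P \Leftrightarrow I \subseteq P \Leftrightarrow I_0 \subseteq P$. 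What remains is exactly to show that $I_0 \subseteq P$ for some $P \in X_0$. The difficulty is the upgrade from the element-level covering (a)—which only places each generator of $I_0$ in some member of $X_0$—to containment of all of $I_0$ in a single such $P$: a direct appeal to prime avoidance fails because these generators may be scattered over infinitely many $t$-maximal ideals, and one cannot in general find a nonzero principal ideal whose support sits inside that of $I_0$ (a $t$-class-group obstruction). The route I would pursue trades this failed reduction for a compactness/scattered-space argument: by (4.7) the needed equality is the density of the isolated points $\{A_P : P \in X_0\}$ in ${\cal C}$ in the inverse topology, so I would suppose ${\cal C}\setminus\overline{\{A_P:P\in X_0\}}$ nonempty, extract from it a nonempty quasicompact inverse-open set $\U(d)\cap{\cal C} = D(I_0)$ disjoint from $X_0$ and hence free of isolated points, and then derive from the finite generation of $I_0$ (afforded by $t$-invertibility) together with the quasicompactness of $\U(d)$ in $\X^{-1}$ a nonunit of $A$ avoiding every member of $X_0$, contradicting (a). Making this extraction precise—so that meeting the finitely many principal supports of the generators of $I_0$ is genuinely forced to produce a member of $X_0$ inside their common support—is the essential and most delicate point, and is exactly the place where finite generation must be combined with the quasicompactness of the inverse topology to stand in for the classical Bézout passage to a Kronecker function ring.
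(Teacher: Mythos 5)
Your forward implication and your overall reduction are sound and consistent with the paper: condition (b) does say exactly that $X$ lies inside the set $X_0$ of $t$-maximal ideals described in Theorem~\ref{Prufer}(3); Theorem~\ref{Prufer} does identify $\{A_P : P \in X_0\}$ with the set of valuation overrings that are strongly irredundant in some representation; and (4.13) together with Theorem~\ref{v theorem} does reduce the corollary to the equivalence of condition (a) for $X_0$ with the equality $A = \bigcap_{P \in X_0} A_P$. The implication from that equality to (a) is proved correctly, by the same one-line argument the paper uses.

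The gap is that the reverse implication --- the only substantive one --- is never actually established. You say yourself that ``making this extraction precise \dots is the essential and most delicate point,'' and what you supply there is a strategy, not an argument: nothing in the proposal derives, from the hypothesis that every nonzero nonunit lies in some $P \in X_0$, the conclusion that every ideal $(A :_A d)$ with $d \in F \setminus A$ (a $t$-finite, $t$-invertible divisorial ideal) is contained in some $P \in X_0$. So as written you have proved only one direction of the corollary. For comparison, the paper's own proof disposes of this step in a single clause: it asserts that condition (a) alone already forces $\{A_P : P \in X\}$ to be a representation of $A$, and then concludes via Theorem~\ref{Prufer} and Theorem~\ref{v theorem}. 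Your suspicion that this inference is not automatic is justified: for a Dedekind domain $A$ with a maximal ideal $Q$ no power of which is principal, the family $X = \Max A \setminus \{Q\}$ satisfies both (a) and (b), yet $\bigcap_{P \in X} A_P \supseteq Q^{-1} \supsetneq A$. The element-wise covering condition controls only principal ideals, whereas what is needed is control of the finitely generated ideals $(A :_A d)$, and the discrepancy is exactly the class-group obstruction you name. Thus your reduction to the full set $X_0$ is more careful than the paper's treatment of this step, but the argument that would close the implication is present neither in your write-up nor, as literally stated, in the paper's one-clause justification; until that step is supplied, the proposal is incomplete.
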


\begin{proof} Suppose that $A$ has  a strongly irredundant representation $Z$.   By Theorem~\ref{v theorem} this representation is unique, and by Theorem~\ref{Prufer} there is a subset $X$ of $t${\rm-Max}\,$A$ such that $Z = \{A_P:P \in X\}$.  Since $A = \bigcap_{P \in X}A_P$, every nonzero element of $A$ is contained in a member of $X$.  Moreover, by Theorem~\ref{Prufer}, each $t$-maximal ideal in $X$ contains a finitely generated ideal that is not contained in any other $t$-maximal ideal. 
Conversely, if $X \subseteq t${\rm-Max}\,$A$ such that (a) and (b) hold for $X$, then by (a), $Z:=\{A_P:P \in X\}$ is a representation of $A$, and by Theorem~\ref{Prufer}, each $V \in Z$ is strongly irredundant in some representation of $A$. Therefore, by Theorem~\ref{v theorem}, $A$ has a strongly irredundant representation. 
\qed\end{proof}

\begin{remark}  The ring of integer-valued polynomials Int$({\mathbb{Z}})$ is the set of all polynomials $f(X) \in {\mathbb{Q}}[X]$ such that $f({\mathbb{Z}}) \subseteq {\mathbb{Z}}$.
Among the many well-known properties of this interesting  ring is that it is a Pr\"ufer domain; see  \cite{CC}.   
In the recent article \cite{CP}, Chabert and Peruginelli characterize all the rings $R$ between Int$({\mathbb{Z}})$ and ${\mathbb{Q}}[X]$ that can be represented as an irredundant intersection of valuation overrings. These are precisely the intermediate rings $R$ such that for each prime integer $p$,  the set $\{\alpha \in \widehat{Z}_p:{\ff M}_{p,\alpha}R \subsetneq R\}$ is dense with respect to the $p$-adic topology in the ring $\widehat{{\mathbb{Z}}}_p$ of $p$-adic integers  \cite[Remark 5.7]{CP}.  Here  ${\ff M}_{p,\alpha} = \{f \in $ Int$({\mathbb{Z}}):f(\alpha) \in p{\widehat{Z}}_p\}$, a maximal ideal of Int$({\mathbb{Z}})$. 
\end{remark}

Let $A$ be a P$v$MD. A {\it subintersection} of  $A$ is an overring of $A$ of the form $\bigcap_{V \in X}V$, where $X \subseteq \{A_P:P \in t${\rm-Spec}\,$A\}$. Equivalently, by Lemma~\ref{new PVMD}, a subintersection of $A$ is an intersection of critical valuation overrings of $A$.  

\begin{corollary} \label{countable Prufer}  If $A$ is a P$v$MD  such that 
$t${\rm-Spec}\,$A$ is countable, then each subintersection $B$  of $A$ can be represented   as a strongly irredundant intersection of essential valuation overrings of $A$. This representation is the only strongly irredundant representation of $B$. 
\end{corollary}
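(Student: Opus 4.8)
The plan is to reduce the statement about an arbitrary subintersection $B$ to the apparatus already developed for P$v$MDs, and then to deploy the countability hypothesis through the scattered-space machinery of Section 3. First I would observe that a subintersection $B$ of $A$ is itself a P$v$MD whose Zariski-Riemann space embeds naturally into $\X$: by Lemma~\ref{new PVMD}, $B = \bigcap_{V \in X}V$ for some set $X$ of essential (equivalently, critical) valuation overrings of $A$, and the critical valuation overrings of $B$ are exactly the essential valuation overrings of $A$ that contain $B$, namely those $A_P$ with $P \in t\mbox{-Spec}\,A$ lying in the support of the representation. The point of passing to $B$ rather than working with $A$ directly is that the hypothesis is stated for the subintersection, but the structure we need is inherited: $B$ is again essential, its critical valuation rings are essential, and so $B$ is a P$v$MD by Lemma~\ref{new PVMD}.

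Next I would identify the relevant representation and control its cardinality. Since $A$ is a P$v$MD, its critical valuation overrings are precisely $\{A_P : P \in t\mbox{-Spec}\,A\}$ (Lemma~\ref{new PVMD} together with Proposition~\ref{localization}), and the canonical center map sends this set homeomorphically onto $t\mbox{-Spec}\,A$ in the appropriate topology (as in the proof of Theorem~\ref{Prufer}, using \cite[Lemma 5, p.~119]{ZS}). Because $t\mbox{-Spec}\,A$ is countable by hypothesis, the set ${\cal C}(\X_B)$ of critical valuation rings of $B$ is a countable subset of the critical valuation rings of $A$, and in particular is a countable subspace of the Zariski-Riemann space of $F/B$. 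Since $B$ is a $v$-domain (being essential and a P$v$MD), it is an intersection of its critical valuation overrings, so ${\cal C}(\X_B)$ is a representation of $B$, and by (4.14) it is the unique minimal representation of $B$.

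The decisive step is then to invoke the countable case of Section 3. Here I would apply Corollary~\ref{countable case} (equivalently (4.8)): the minimal representation ${\cal C}(\X_B)$ of $B$ is a countable patch in the Zariski-Riemann space of $F/B$, and a countable spectral space is scattered in the patch topology, so its isolated points are dense. By Corollary~\ref{first cor}, density of the isolated points in the minimal representation guarantees that the set of isolated points is itself a strongly irredundant representation of $B$, consisting of essential valuation overrings since every member of ${\cal C}(\X_B)$ is essential. Finally, uniqueness follows from Theorem~\ref{v theorem}, since $B$ is a $v$-domain and therefore admits at most one strongly irredundant representation; this pins down the representation as the set of isolated points and completes the argument.

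The main obstacle I anticipate is verifying that ${\cal C}(\X_B)$ is genuinely a \emph{patch}-closed (spectral) subspace of countable cardinality, rather than merely a countable subset — this distinction is exactly what Example~6.9 in the excerpt warns is essential, since a countable but non-patch-closed family can fail to contain any irredundant member. The resolution should come from the fact that, for a P$v$MD, the full set of essential valuation overrings is patch closed by \cite[Corollary 2.6]{FT}, and ${\cal C}(\X_B)$ is obtained as an inverse-closed (hence patch-closed) subset of that family via the homeomorphism with $t\mbox{-Spec}\,A$; the countability of $t\mbox{-Spec}\,A$ then transfers directly. Making the passage from $A$ to the subintersection $B$ compatible with these topological identifications is the step that requires the most care.
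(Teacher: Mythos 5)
Your argument follows essentially the same route as the paper: reduce to the countable-patch result of Section 3 (Corollary~\ref{countable case}/(4.8)) using the patch-closedness of the set of essential valuation overrings \cite[Corollary 2.6]{FT}, and get uniqueness from Theorem~\ref{v theorem} since a subintersection of a P$v$MD is again a P$v$MD, hence a $v$-domain. The one soft spot is exactly the step you flagged: you apply the scattered-space argument to ${\cal C}(\X_B)$ and assert it is ``a countable patch,'' but the set of minimal elements of a closed set need not be patch closed (it is the set of closed points of a spectral space under the inverse topology, which is quasicompact but generally not a patch), and your proposed fix --- that ${\cal C}(\X_B)$ is ``inverse-closed'' --- is not correct for the same reason. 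The paper sidesteps this entirely by applying (4.8) to $\patch(Y)$, where $Y\subseteq\{A_P:P\in t\mbox{-Spec}\,A\}$ is the given representation of $B$: this set is patch closed by construction and countable because it sits inside the countable patch-closed set of essential valuation rings. Your version can be repaired without changing its shape: ${\cal C}(\X_B)$ is contained in the countable, patch-closed, hence compact Hausdorff and therefore scattered set $\{A_P:P\in t\mbox{-Spec}\,A\}$ (here one needs that overrings of essential valuation rings of a P$v$MD are again essential, so this set equals its own set of generizations), and scatteredness is inherited by subspaces, so the isolated points of the minimal representation ${\cal C}(\X_B)$ are dense and Corollary~\ref{first cor} applies directly. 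With that adjustment your proof is complete and matches the paper's in substance.
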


\begin{proof}
Let $X = \{A_P:P \in t${\rm-Spec}\,$A\}$, and let $Y \subseteq X$. Then  $\patch(Y)$ is a representation of $B=\bigcap_{V \in Y}V$.  Since 
 by \cite[Corollary 2.6]{FT}, $X$ is a patch closed representation of $A$, $\patch(Y) \subseteq X$, and hence $\patch(Y)$ is countable. Therefore, by (4.8), $B$ has a strongly irredundant representation in $\patch(Y)$. Since the valuation rings in $X$ are  essential for $A$, hence essential for  $B$,  Proposition~\ref{localization} implies that the valuation rings in $Y$ are critical for $B$,. Thus the ring $B$ has a strongly irredundant representation consisting of critical valuation rings. As a subintersection of 
 $A$, $B$ is a Pr\"ufer $v$-multiplication domain, hence a $v$-domain. Therefore, 
 by Theorem~\ref{v theorem}, there is only one strongly irredundant representation of $B$.   
\qed\end{proof}
 
 
 
 When $A$ is a P$v$MD for which  $t${\rm-Max}\,$A$ is a Noetherian subspace of $\Spec A$, then
$\{A_P:P \in  t${\rm-Max}\,$A\}$ is a Noetherian space of valuation overrings that represents $A$. Therefore, by Theorem~\ref{main Noetherian}, $A$ has a strongly irredundant representation in $\{A_P:P \in  t${\rm-Max}\,$A\}$. However, the fact that $A$ is a P$v$MD allows us to assert the stronger claim that $\{A_P:P \in  t${\rm-Max}\,$A\}$ itself is a strongly irredundant representation of $A$, and that this property is inherited by similarly constituted representations  of subintersections of $A$. We prove this in the next theorem.

 \begin{theorem} \label{PVMD Noetherian}  Suppose $A$ is a  P$v$MD. Then 
 $t${\rm-Max}\,$A$ (resp., $t${\rm-Spec}\,$A$) 
 is  Noetherian  in the Zariski topology
 if and only if 
each collection $X$ of incomparable essential valuation overrings  is a strongly (resp., tightly) irredundant representation of $\bigcap_{V \in X}V$.
%

 %
 \end{theorem}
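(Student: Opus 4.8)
The plan is to transport everything to the spectral space $E$ of essential valuation overrings of $A$ and to read off both equivalences from the tree results of Section~2. By Lemma~\ref{new PVMD}, $E$ is exactly the set of critical valuation overrings of $A$, and by \cite[Corollary 2.6]{FT} it is patch closed, hence a spectral subspace of $\X$ that is closed under generalization. Give $E$ the Zariski topology; its specialization order is then reverse inclusion, and since the valuation overrings of a fixed $V$ form a chain, $\down V$ is a chain for every $V$, so $(E,\leq)$ is a tree. Under this order $\Max E$ is the set of minimal essential valuation rings $\{A_P:P\in t\text{-Max}\,A\}$, which the center map carries homeomorphically onto $t$-Max $A$ (as in the proof of Theorem~\ref{Prufer}, via \cite[Lemma 5, p.~119]{ZS}); the same map identifies $E$ with $t$-Spec $A$. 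Thus $t$-Max $A$ (resp.\ $t$-Spec $A$) is Noetherian in the Zariski topology if and only if $\Max E$ (resp.\ $E$) is.

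For the $t$-Max / strongly irredundant equivalence I would apply Theorem~\ref{Noetherian prop} to $E$, which says that $\Max E$ is Noetherian precisely when every family $X$ of pairwise incomparable members of $E$ is discrete in the inverse topology. For the forward direction, let $X$ be such a family and put $B=\bigcap_{V\in X}V$. Each $V\in X$ is essential for $A$, hence essential, hence critical, for the subintersection $B$ (as in the proof of Corollary~\ref{countable Prufer}, via Proposition~\ref{localization}). Discreteness gives that $V$ is isolated in the inverse topology on $X$, i.e.\ $V\notin\inv(X\setminus\{V\})$; were $V$ redundant, $X\setminus\{V\}$ would represent $B$, so $\inv(X\setminus\{V\})$ would be an inverse closed representation of $B$ and would contain the critical ring $V$, a contradiction. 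Hence $V$ is irredundant, and by Corollary~\ref{irredundant is si} strongly irredundant, in $X$. Conversely, if every such $X$ is a strongly (in particular, irredundantly) irredundant representation of $\bigcap X$, then by Lemma~\ref{ti} each member is isolated in the inverse topology, so $X$ is discrete and Theorem~\ref{Noetherian prop} returns the Noetherian property of $\Max E=t$-Max $A$.

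For the $t$-Spec / tightly irredundant equivalence I would bootstrap from the previous case using Lemma~\ref{technical Noetherian}: since $E$ is a tree, once $\Max E$ is Noetherian, $E$ is Noetherian iff $(E,\leq)$ satisfies ACC, i.e.\ iff there is no infinite ascending chain of $t$-primes. The bridge to tight irredundance is the remark after (4.4): for an essential $V$, tight and strong irredundance coincide exactly when $\ff M_V$ is not the union of the primes properly contained in it, equivalently when $V\neq\bigcap\{W\in\X:W\supsetneq V\}$; and by (4.4) tight irredundance of $V$ in $X$ is irredundance of $V$ in $\inv(X)$. The two auxiliary facts I would use are that the union of an ascending chain of $t$-primes is again a $t$-prime (as $t$ has finite type) and that for such a limit prime $Q$ one has $A_Q=\bigcap_{Q'\subsetneq Q}A_{Q'}$. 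These make ACC on $t$-primes equivalent to the assertion that \emph{every} essential $V$ satisfies $V\neq\bigcap\{W:W\supsetneq V\}$, i.e.\ that strong and tight irredundance coincide throughout $E$. Granting this, the tight equivalence follows from the $t$-Max case: if $t$-Spec $A$ is Noetherian then ACC holds and $\Max E$ is Noetherian, so every incomparable family is strongly and hence tightly irredundant; conversely, applying the tight hypothesis to the \emph{singletons} $\{V\}$ forces $V\neq\bigcap\{W:W\supsetneq V\}$ for each essential $V$ (whence ACC), while the general incomparable families, through the now-coinciding notions and the $t$-Max case, give that $\Max E$ is Noetherian, so Lemma~\ref{technical Noetherian} yields that $E\cong t$-Spec $A$ is Noetherian.

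The routine parts are the identification of $E$ with $t$-Spec $A$ and the step that essential-for-$A$ rings are critical for subintersections. The hard part will be the bookkeeping in the $t$-Spec / tightly irredundant case: pinning down the precise condition that separates tight from strong irredundance (the maximal ideal not being a union of smaller primes), recovering it from the singleton families, and matching it cleanly with ACC on $t$-primes by means of the finite-type union fact, so that Lemma~\ref{technical Noetherian} can upgrade the $t$-Max statement to the $t$-Spec statement.
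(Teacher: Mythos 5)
Your proposal is correct, and its skeleton coincides with the paper's: both reduce to the patch closed spectral space $E$ of essential (equivalently, critical) valuation overrings, obtain the $t$-Max statement from Theorem~\ref{Noetherian prop}, and obtain the $t$-Spec statement from Lemma~\ref{technical Noetherian} together with the singleton families $\{V\}$, the bridge in both cases being that ACC on the $t$-primes is equivalent to $\bigcap\{W : V \subsetneq W\} \neq V$ for every essential $V$, which is precisely the condition under which strong and tight irredundance coincide. The one genuinely different step is the forward direction of the $t$-Max case. There the paper first proves that an incomparable family $X$ with intersection $B$ equals the set of localizations of $B$ at its $t$-maximal ideals --- this uses Kang's theorem that the subintersection $B$ is again a P$v$MD together with two localization results from \cite{ORej} --- and only then applies Theorem~\ref{Prufer} to $B$. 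You bypass that identification entirely: each $V \in X$ is essential, hence by Proposition~\ref{localization} critical, for $B$, so if it were redundant it would have to lie in the inverse closed representation $\inv(X \setminus \{V\})$ of $B$, contradicting the discreteness furnished by Theorem~\ref{Noetherian prop}; Corollary~\ref{irredundant is si} then upgrades irredundance to strong irredundance. Your route is shorter and avoids the external citations; the paper's route yields the extra structural fact that $X$ is exactly the minimal (critical) representation of $B$, which is of independent interest but not needed for the stated equivalence.
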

 
 \begin{proof} 
 Suppose $t${\rm-Max}\,$A$  is a Noetherian space. By 
 \cite[Corollary 2.6]{FT}, the set $E = \{A_P:P \in t${\rm-Spec}\,$A\}$ of essential valuation overrings of $A$ is patch closed. Thus 
 by Proposition~\ref{Hoc prop}(4)(a), $E$  is spectral in the Zariski topology.  Since $\min E = \{A_P:P \in t${\rm-Max}\,$A\}$ is by assumption Noetherian in the Zariski topology, 
 Theorem~\ref{Noetherian prop} implies 
 every subset $X$ of $E$ consisting of incomparable valuation rings is discrete in the inverse topology.
 
Now let $X$ be a subset of $E$ consisting of incomparable valuation rings.
The ring  $B:=\bigcap_{V \in 
X}V$ is again a PVMD \cite[Corollary 3.9]{Kang}.  We claim that $X = \{B_P:P \in  t${\rm-Max}\,$B\}$. To this end, let $P$ be a 
 maximal $t$-ideal of $B$. Then since by Lemma~\ref{technical Noetherian}, $X$ is Noetherian and $B = \bigcap_{V \in X}V$, we have $B_P = \bigcap_{V \in X}(VB_P)$  \cite[Theorem 3.5]{ORej} and $\{VB_P:V \in X\}$ is a Noetherian subspace of $\X$ in the Zariski topology \cite[Theorem 3.7]{ORej}. Thus $\{VB_P:V \in X\}$ satisfies DCC, and since $B_P = \bigcap_{V \in X}(VB_P)$ is a valuation ring, this forces $B_P = VB_P$ for some $V \in X$, and hence $V \subseteq B_P$.  Since $V$ is essential for $A$, hence for $B$, $V = B_Q$ for some $t$-prime ideal $Q$ of $B$.  Therefore, since $P$ is  a $t$-maximal ideal and $B_Q \subseteq B_P$, we have $B_P = V \in X$, which shows $\{B_P:P \in  t${\rm-Max}\,$B\} \subseteq X$.  
 
 To verify the reverse inclusion, let $W \in X$. 
 Then since $W$ is essential,  $W = B_Q$ for some prime ideal $Q$ in $B$, and hence $Q$ is a $t$-prime ideal of $B$. Let $L$ be a maximal $t$-ideal of $B$ containing $Q$. Then, by what we have shown, $B_L \in X$, so since $B_L \subseteq W$ and the members of $X$ are incomparable, it must be that $B_L=W = B_Q$, proving that $Q \in  t${\rm-Max}\,$B$.  This proves that $X = \{B_P:P \in  t${\rm-Max}\,$B\}$. Therefore, it follows from \cite[Lemma 5, p.~119]{ZS} that $X$ is homeomorphic to $ t${\rm-Max}\,$B$, so that  $t${\rm-Max}\,$B$ is 
discrete in the inverse topology. By Theorem~\ref{Prufer} each member of $X$ is strongly irredundant in some representation of $B$. By Proposition~\ref{localization}, the valuation rings in $X$ are critical for $B$, so by (4.14), $X$ is a strongly irredundant representation of $B$.  

Conversely, suppose
 each collection $X$ of incomparable valuation rings in $\{A_P:P \in t${\rm-Spec}\,$A\}$ is a strongly irredundant representation of $\bigcap_{V \in X}V$.
Then by Theorem~\ref{Prufer} each such collection $X$ is discrete in the  inverse topology, and consequently, each collection of incomparable prime ideals in $ t${\rm-Spec}\,$A$ is discrete in the inverse  topology. Therefore, by Theorem~\ref{Noetherian prop}, 
  $t${\rm-Max}\,$A$  is  Noetherian  in the Zariski topology.
  
  Now suppose that $t${\rm-Spec}\,$A$  is Noetherian. As noted above, $E = \{A_P:P \in t${\rm-Spec}\,$A\}$ is a spectral space; it is also Noetherian since $t${\rm-Spec}\,$A$ is. Therefore, for any $V \in E$, the set $\{W \in E:V \subseteq W\}$ satisfies DCC. Thus if $X$ is any collection of incomparable valuation rings in $E$, since we have established that each valuation ring in $X$ is strongly irredundant in the intersection $\bigcap_{V\in X}V$, it follows that each valuation ring in $X$ is tightly irredundant. 
  
  Conversely, suppose that 
each collection $X$ of incomparable critical valuation overrings  is a tightly irredundant representation of $\bigcap_{V \in X}V$. We have established already that this implies that $t${\rm-Max}\,$A$ is Noetherian. Thus by Lemma~\ref{technical Noetherian}, to prove that $t${\rm-Spec}\,$A$ is Noetherian, we need only verify that for each $V \in E$, the set $\{ W \in E:V \subsetneq W\}$ has a minimal element with respect to $\subseteq$.  Let $V \in E$. Then by assumption $V$ is tightly irredundant in the representation $\{V\}$ of the ring $V$, so the intersection of the valuation rings in $\{W \in E:V \subsetneq E\}$ is again in this same set. Therefore, $t${\rm-Spec}\,$A$ is Noetherian. 
 \qed\end{proof} 
 
Gabelli, Houston and Lucas \cite{GHL} define a domain $A$ to have property $(t\#)$ if whenever $X$ is a collection of pairwise incomparable $t$-prime ideals and $P \in X$, $\bigcap_{Q \ne P}A_Q \subsetneq A_P$, where $Q$ ranges over the prime ideals in $X$ distinct from $P$. 
  Using Theorem~\ref{PVMD Noetherian}, additional characterizations of P$v$MDs  with Noetherian $t$-maximal spectrum can be deduced from the work of Gabelli, Houston and Lucas; 
  see  for example Propositions 2.4, 2.6 and 2.8 of \cite{GHL}.  Similarly,  for a P$v$MD $A$, the characterization of Noetherian spectral spaces given in Lemma~\ref{RW lemma} can be used to link  the property  in which $t${\rm-Spec}\,$A$ is Noetherian 
  to the work of Gabelli, Houston and Lucas, specifically 
  to the equivalent characterizations in Propositions 2.11 and Theorem 2.14 in \cite{GHL}.  For additional applications, see \cite{ElB}. For example, it follows
 from Corollary~\ref{Noetherian min} and \cite[Theorem 3.9]{ElB} 
   that $A$ is a generalized Krull domain if and only if $A$ is a P$v$MD for which $t${\rm-Max}\,$A$ is Noetherian and $P\ne (P^2)_t$ for each $t$-prime ideal $P$ of $A$.

 The $(t\#)$ property extends to  non-Pr\"ufer settings the property $(\#)$ introduced for Pr\"ufer domains  by Gilmer \cite{Gilsharp} and studied further by Gilmer and Heinzer in \cite{GHsharp}.  
 A  Pr\"ufer domain $A$ is said to satisfy $(\#)$ if for each maximal ideal $M$ of $A$, $A_M$ is irredundant in the representation $\{A_N:N \in \Max A\}$ of $A$. 
Property (\#) and the stronger version $(\#\#)$, which requires that every overring has $(\#)$, play 
 an important role in the local-global theory of Pr\"ufer domains; see for example \cite{FHL, FHP}. 
 Since every maximal ideal of a Pr\"ufer domain is a $t$-maximal ideal, 
the properties $(\#)$ and $(t\#)$ coincide for Pr\"ufer domains.  Thus we have the following 
   topological characterization of Pr\"ufer domains satisfying $(\#\#)$. The corollary, which is immediate from Theorem~\ref{PVMD Noetherian}, is implicit in \cite[Theorem 5.14]{FHO}, where it is proved using the work of Rush and Wallace \cite{RW}.

\begin{corollary} Suppose $A$ is Pr\"ufer domain. Then Max $A$ is a Noetherian space if and only if $R$ satisfies $(\#\#)$. \qed
\end{corollary}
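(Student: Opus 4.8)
The plan is to specialize Theorem~\ref{PVMD Noetherian} to the Pr\"ufer case, where the $t$-operation is trivial. First I would record the relevant identifications. A Pr\"ufer domain $A$ is a P$v$MD in which every nonzero finitely generated ideal is invertible, hence divisorial; consequently $I_t = I$ for every ideal $I$, so that $t${\rm-Max}\,$A = \Max A$ and $t${\rm-Spec}\,$A = \Spec A$. Moreover every valuation overring of $A$ is a localization $A_P$, hence essential, and by Remark~\ref{GH remark} the notions ``irredundant'' and ``strongly irredundant'' coincide for such valuation rings. With these identifications, the $t$-maximal/strongly irredundant half of Theorem~\ref{PVMD Noetherian} reads: $\Max A$ is Noetherian if and only if every collection $X$ of pairwise incomparable valuation overrings of $A$ is an irredundant representation of $\bigcap_{V \in X}V$. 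Call this last condition $(\star)$; it then remains only to identify $(\star)$ with $(\#\#)$.

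For the implication $(\star) \Rightarrow (\#\#)$ I would let $B$ be an arbitrary overring of $A$ and apply $(\star)$ to the collection $X = \{B_M : M \in \Max B\}$. Since $A$ is Pr\"ufer, each $B_M$ is a valuation overring of $A$, the $B_M$ are pairwise incomparable, and $\bigcap_{M \in \Max B}B_M = B$; thus $(\star)$ asserts that this representation is irredundant, which is exactly property $(\#)$ for $B$. As $B$ is arbitrary, $A$ satisfies $(\#\#)$.

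For the converse $(\#\#) \Rightarrow (\star)$ I would start with a collection $X$ of pairwise incomparable valuation overrings and set $B = \bigcap_{V \in X}V$. The key step is to show that $X$ is precisely the canonical representation $\{B_M : M \in \Max B\}$; granting this, property $(\#)$ for $B$ (supplied by $(\#\#)$) yields irredundance of $X$, which is $(\star)$. To establish $X = \{B_M : M \in \Max B\}$ I would invoke the structure of overrings of a Pr\"ufer domain: each $V \in X$ is a localization $A_{P_V} = B_{P_V B}$ at its center $P_V$, and $B$, being an overring of the Pr\"ufer domain $A$, is flat, so its spectrum is identified via contraction with the generization-closed set $\{P \in \Spec A : B \subseteq A_P\} = \down\{P_V : V \in X\}$. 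The maximal elements of this set are exactly the incomparable primes $P_V$, so the $P_V B$ are precisely the maximal ideals of $B$ and $X = \{B_M : M \in \Max B\}$.

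The main obstacle is this last identification. What must be checked is that no prime $P' \supsetneq P_V$ satisfies $A_{P'} \supseteq B$: such a $P'$ would, by the flat-overring correspondence, lie below some $P_W$ with $W \in X$, forcing $P_V \subsetneq P' \subseteq P_W$ and contradicting the incomparability of $X$. This is where the Pr\"ufer hypothesis is essential, since it guarantees both that every overring is flat and that the lattice of such overrings is governed by the generization order on $\Spec A$. Once the identification is in hand, combining the two implications gives $\Max A$ Noetherian $\iff (\star) \iff (\#\#)$, as claimed; the homeomorphism between a collection of incomparable valuation overrings and its set of centers in $A$ (\cite[Lemma 5, p.~119]{ZS}), already used in Theorems~\ref{Prufer} and~\ref{PVMD Noetherian}, lets one pass freely between the topological statements about valuation overrings and those about prime ideals.
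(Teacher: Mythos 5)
Your reduction of the corollary to Theorem~\ref{PVMD Noetherian} is the right framework, and one half of it is fine: for the implication ``every antichain of valuation overrings is an irredundant representation of its intersection $\Rightarrow (\#\#)$'' you correctly apply the theorem to the antichain $\{B_M : M \in \Max B\}$ attached to an arbitrary overring $B$. The gap is in the converse. To deduce the hypothesis of Theorem~\ref{PVMD Noetherian} from $(\#\#)$ you need, for an \emph{arbitrary} antichain $X$ with $B = \bigcap_{V \in X}V$, the identification $X = \{B_M : M \in \Max B\}$, equivalently the equality $\{P \in \Spec A : B \subseteq A_P\} = \down\{P_V : V \in X\}$. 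Your justification of this is circular: you dispose of a hypothetical prime $P'$ with $B \subseteq A_{P'}$ and $P'$ not below any $P_V$ by asserting that ``by the flat-overring correspondence'' it must lie below some $P_W$ --- but that correspondence only identifies $\Spec B$ with $\{P : B \subseteq A_P\}$; it says nothing about that set coinciding with the generizations of the $P_V$. That coincidence is exactly what is at stake, and it is false for a general Pr\"ufer domain: if $A$ fails $(\#)$, there is a maximal ideal $M$ with $\bigcap_{N \ne M}A_N = A \subseteq A_M$, so for the antichain $X = \{A_N : N \in \Max A,\ N \ne M\}$ the prime $M$ lies in $\{P : B \subseteq A_P\}$ but not in $\down\{P_V\}$. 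So the identification must make essential use of $(\#\#)$ (or of Noetherianity of $\Max A$), and your argument never brings that hypothesis to bear; note that in the paper the identification $X = \{B_P : P \in t\mbox{\rm -Max}\,B\}$ is established only in the \emph{forward} direction of Theorem~\ref{PVMD Noetherian}, where the Noetherian hypothesis is available and is used via a DCC argument.

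The paper's route for the converse avoids this entirely: irredundance of a member of an antichain is converted into a topological statement by Theorem~\ref{Prufer} (a valuation overring is strongly irredundant in some representation if and only if its center is a $t$-maximal ideal isolated in the inverse topology), and Theorem~\ref{Noetherian prop} then translates ``every antichain is discrete in the inverse topology'' into Noetherianity of the maximal spectrum; the $(\#\#)$ hypothesis enters through the finitely generated ideals supplied by condition (3) of Theorem~\ref{Prufer} (compare \cite{GHsharp} and \cite[Theorem 5.14]{FHO}, which proceed via Rush--Wallace). To repair your argument, either follow that route for $(\#\#) \Rightarrow \Max A$ Noetherian, or give a genuine derivation of the identification $X = \{B_M : M \in \Max B\}$ from $(\#\#)$; it is not a formal consequence of flatness of overrings.
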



\section{Irredundance in intersections of irreducible ideals}

An ideal $A$ of the ring $R$ is {\it irreducible} if $A$ is not an intersection of two ideals properly containing it. Every ideal $A$ is the intersection of the irreducible ideals containing it. Indeed, if $r \in R \setminus A$, then by Zorn's Lemma, there exists an ideal of $R$ maximal with respect to containing $A$ and  not containing $r$. This ideal is necessarily irreducible, from which it follows that $A$ is an intersection of irreducible ideals. In this section we are interested in when $A$ is an irredundant intersection of irreducible ideals. 
We show  how the topological approach of Section 3 can be applied to intersection decompositions of irreducible ideals in {\it arithmetical rings}, those rings $R$ for which the ideals of $R_M$ form a chain for each maximal $M$ of $R$.  The ambient spectral space from which these intersection representations are drawn is $\ispec R$, the set of proper irreducible ideals of $R$, viewed as a topological space having a  basis of closed sets of the form $\V(A) := \{B \in \ispec R:A \subseteq B\}$, where $A$ ranges over all the  ideals of $R$. Since $R \not \in \ispec R$, the maximal elements in $\ispec R$ are the maximal ideals of $R$.  
By a {\it representation} of $A$ we mean a subset of $\ispec R$ whose  intersection is $A$. 

\begin{lemma} \label{arithmetical}
Let $R$ be an arithmetical ring. Then for each proper ideal $A$ of $R$, $\V(A)$ is a spectral representation of $A$ that  does not contain any proper closed representations of $A$.  
\end{lemma}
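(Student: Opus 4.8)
Let $R$ be an arithmetical ring and $A$ a proper ideal. We must show two things: (i) $\V(A)$ is a spectral representation of $A$, and (ii) $\V(A)$ contains no proper closed representation of $A$.

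**My proof plan.**

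\emph{Step 1: $\ispec R$ is a spectral space and $\V(A)$ is closed in it.} The plan is to verify first that $\ispec R$ sits as a patch-closed subspace of $\mathbf{2}^R$, exactly in the style of Example~\ref{spectral exs}, so that it is spectral by Proposition~\ref{Hoc prop}. The set of proper ideals is the patch-closed set $X_1$ from Example~\ref{spectral exs}(1). To cut out the irreducible ones, I would encode irreducibility as a first-order condition amenable to the ``(not $p$) or $q$'' rewriting described in the text. For an arithmetical ring the local condition simplifies: since $R_M$ has totally ordered ideals, an ideal is irreducible if and only if for all $a,b$, $aR\cap bR\subseteq I$ forces $a\in I$ or $b\in I$ (strong irreducibility and irreducibility coincide in the arithmetical setting). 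That is precisely the condition defining $X_4$ in Example~\ref{spectral exs}(4). Because $R$ is arithmetical, $aR\cap bR$ is locally principal, hence finitely generated, so $\U(aR\cap bR)$ is quasicompact open and the intersection defining $X_4$ is patch closed. Thus $\ispec R=X_4$ is a spectral space, and $\V(A)$, as a basic closed set, is closed. Since $A=\bigcap_{B\in\V(A)}B$ (the text notes every proper ideal is the intersection of the irreducible ideals containing it), $\V(A)$ is a representation of $A$; taking $C=D=R$ and using the subbasis $\{\U(d):d\in R\}$, it is a spectral ($R$-)representation in the sense of Definition~\ref{long def}.

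\emph{Step 2: no proper closed representation.} This is the crux, and the main obstacle. I would argue by contradiction: suppose $Y\subsetneq\V(A)$ is a closed representation of $A$, and pick $B_0\in\V(A)\setminus Y$. Since $Y$ is closed and $\{\U(d)\}$ is a subbasis, there is a basic closed neighborhood structure separating $B_0$ from $Y$; concretely there exists a finite set $F\subseteq R$ with $Y\subseteq\V(F)$ but $B_0\notin\V(F)$, i.e. $F\subseteq B$ for all $B\in Y$ yet $F\not\subseteq B_0$. Because $Y$ represents $A$, every $B\in Y$ contains $A$, and I want to derive that in fact $B_0$ cannot have been omissible — the key is that $\V(A)\setminus\{B_0\}$ should already fail to intersect down to $A$. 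The natural mechanism: since $B_0\in\ispec R$ is irreducible and contains $A$, there is an element $r\in B_0\setminus A$ such that $B_0$ is maximal among ideals containing $A$ and excluding some witness; more usefully, every irreducible ideal containing $A$ that lies strictly above $A$ is ``needed,'' and I expect that $A=\bigcap_{B\in Y}B$ together with $B_0\supseteq A$ and $B_0$ irreducible forces $B_0\in\overline{Y}=Y$. The heart of the argument is to show each member of $\V(A)$ is \emph{critical} in the sense of Proposition~\ref{vacant lemma}: if $A=A_1\cap\cdots\cap A_n$ with each $A_i$ an intersection of irreducible ideals above $A$, then irreducibility of $B_0$ (via the strong-irreducibility reformulation) forces some $A_i\subseteq B_0$. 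Running Proposition~\ref{vacant lemma} then shows every $B_0\in\V(A)$ lies in every closed representation, so the only closed representation is $\V(A)$ itself.

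\emph{Where the difficulty lies and how I would close it.} The delicate point is converting set-theoretic omission into the algebraic irreducibility condition: I must show that if $B_0$ is dropped, the remaining intersection strictly exceeds $A$. I would do this by producing, for the irreducible ideal $B_0\supsetneq A$, an element witnessing irreducibility — using that in an arithmetical ring an irreducible ideal $B_0$ containing $A$ is determined by a ``tangent direction'' that no other irreducible ideal in $\V(A)$ can absorb. Concretely, I would take $r\in B_0$ and $s\notin B_0$ and exploit that $rR\cap sR\subseteq B_0$ with $s\notin B_0$ forces $r\in B_0$, then lift this to show that the intersection of all \emph{other} irreducible ideals containing $A$ contains an element outside $B_0$-relations that keeps it strictly above $A$ unless $B_0$ is retained. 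Packaging this through Proposition~\ref{vacant lemma} is cleaner than arguing omission directly, so my final writeup would phrase Step 2 entirely as: every $B\in\V(A)$ is critical, whence by the definition of critical and Proposition~\ref{vacant lemma} no proper closed subset of $\V(A)$ can be a representation.
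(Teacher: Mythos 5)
Your Step 1 follows the paper exactly: irreducible equals strongly irreducible in an arithmetical ring, $aR\cap bR$ is finitely generated, so Example~\ref{spectral exs}(4) makes the set of irreducible proper ideals, and hence its closed subset $\V(A)$, a spectral representation. (One quibble there: ``locally principal, hence finitely generated'' is not a valid inference in general; the finite generation of $aR\cap bR$ in an arithmetical ring is a genuine theorem, which the paper simply cites from Shores--Wiegand, and you should too.) For Step 2 you take a genuinely different route. The paper observes that every closed subset of $\ispec R$ has the form $\V(B)$ for a single ideal $B$ (arbitrary intersections of basic closed sets are basic, and so are finite unions, the latter precisely because of strong irreducibility), and that $B$ is recovered from $\V(B)$ as the intersection of its members; hence a closed representation $\V(B)$ of $A$ forces $B=A$. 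You instead verify the criterion of Proposition~\ref{vacant lemma}: if $A=A_1\cap\cdots\cap A_n$ with each $A_i$ an intersection of members of $\V(A)$, then strong irreducibility of $B_0$ applied inductively to $A_1\cap\cdots\cap A_n\subseteq B_0$ yields $A_i\subseteq B_0$ for some $i$, so every member of $\V(A)$ is critical and therefore lies in every closed representation. Both arguments rest on the same algebraic fact, but yours buys something extra: it directly establishes that every ideal in $\V(A)$ is critical, which the paper has to re-derive when proving Theorem~\ref{irreducible dec}. Be aware that the middle portion of your Step 2 (separating $B_0$ from $Y$ by a single $\V(F)$, and the claim that every irreducible ideal strictly above $A$ is ``needed'') is both vague and, as stated, incorrect --- a closed set is only an intersection of \emph{finite unions} of sets $\V(F)$, and members of $\V(A)$ are certainly not all irredundant --- but since your announced final write-up discards that discussion in favor of the criticality argument, the proof as you propose to package it is sound.
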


\begin{proof} 
Every irreducible ideal in an arithmetical ring is strongly irreducible \cite[Lemma 2.2(3)]{HRR}. Also, 
the intersection of two finitely generated ideals in an arithmetical ring is finitely generated \cite[Corollary 1.11]{SW}, so by Example~\ref{spectral exs}(4), $\V(A)$ is a spectral representation of $A$. Finally, suppose $X$ is a closed subset of $\V(A)$ that is a representation of $A$. Then $X= \V(B)$ for some proper ideal $B$ of $R$. Since 
 every proper ideal of $R$ is  an intersection of irreducible ideals, $B$ is the intersection of the ideals in $\V(B)$. Since $\V(B)$ is a representation of $A$, this forces  $A  = B$. Therefore,  
 no proper closed subset of $\V(A)$ is a representation of $A$. 
%
%
%
%
%
\qed\end{proof}

Let $A$ be a proper ideal of the arithmetical ring $R$. Since $\V(A)$ is a spectral space and the specialization order agrees with set inclusion, $\V(A)$ contains minimal elements with respect to set inclusion and $A$ is an intersection of these minimal irreducible ideals. 
Using the ideas developed in Section 3, along with the results about Noetherian spectral spaces in Section 2, we obtain a version of a theorem proved in \cite{FHO} by different methods.   
We recall  that a few notions from \cite{FHO}. 
 A  {\it Krull associated prime} of an ideal $A$ of a ring $R$   is a prime ideal that is a union of ideals of the form $A:r= \{s \in R:rs \in A\}$ with $r \in R$. 
If $P$ is a prime ideal of $R$,  we set $A_{(P)}:=\{r \in R:br \in A$ for some $b \in R \setminus P\}$.  A Zorn's Lemma argument shows that   the set of Krull associated primes of $A$ contains maximal elements. Let ${\mathcal{X}}_A$ denote the set of these maximal elements. Then (with $R$ arithmetical) we have
  $\{A_{(P)}:P \in {\mathcal{X}}_A\} = \min \V(A)$; see \cite[Theorem 5.8]{FHO}.


   
   
 



\begin{theorem} \label{irreducible dec} 
{\em (cf.~\cite[Theorem 5.14]{FHO})} 
If $R$ is an arithmetical ring for which $\Max R$ is Noetherian in the Zariski topology, then for each proper ideal $A$ of $R$ the set of irreducible ideals that are minimal over $A$ is a 
 strongly irredundant representation of  $A$, and this is the unique irredundant representation of $A$ as an intersection of irreducible ideals.

 
\end{theorem}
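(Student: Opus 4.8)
The plan is to identify $\V(A)$ as the \emph{unique} closed representation of $A$, convert the Noetherian hypothesis on $\Max R$ into discreteness (indeed finiteness) of $\min\V(A)$, and then read off strong irredundance and uniqueness from the topological results of Section~3. I would first record the ambient structure: on $\ispec R$ the specialization order is set inclusion (the smallest closed set containing an irreducible ideal $I$ is $\V(I)$), the maximal elements are the maximal ideals of $R$, and $\ispec R=\V(0)$ is spectral by Lemma~\ref{arithmetical}. By that same lemma $\V(A)$ is a spectral representation of $A$ containing no proper closed representation. Now any closed representation $Y$ of $A$ satisfies $Y\subseteq\V(A)$, since every member of $Y$ contains $A$; as $Y$ is itself a representation, Lemma~\ref{arithmetical} forces $Y=\V(A)$. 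Hence $\V(A)$ is the unique closed representation of $A$, a fortiori the unique minimal closed representation, so $Z:=\min\V(A)$ is the unique minimal representation of $A$ in the sense of Definition~\ref{long def}(4), and by the remark following Lemma~\ref{arithmetical} it is a representation, $A=\bigcap_{B\in Z}B$.

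Next I would prove that $Z$ is discrete. The one genuinely ring-theoretic input is that $(\ispec R,\subseteq)$ is a tree, i.e. the irreducible ideals contained in a given proper irreducible ideal $I$ form a chain; this is where arithmeticity is used, via the fact that an irreducible (equivalently strongly irreducible) ideal lies in a unique maximal ideal $M$ and equals $IR_M\cap R$, so that two irreducible ideals beneath $I$ localize to comparable ideals of the chained ring $R_M$ and are therefore comparable in $R$. Granting this and using $\Max(\ispec R)=\Max R$ Noetherian, Corollary~\ref{Noetherian min} applied to the closed set $\V(A)$ shows $Z=\min\V(A)$ is finite; a finite antichain in a spectral space is discrete, since for $B\in Z$ the closed set $\up(Z\setminus\{B\})$ meets $Z$ in exactly $Z\setminus\{B\}$, making $\{B\}$ open in $Z$. (Equivalently, Theorem~\ref{Noetherian prop} gives discreteness of the antichain $Z$ in the inverse topology, which passes to the finer patch topology and thence, by Proposition~\ref{Hoc prop}(4)(c), to the spectral topology on $\min\V(A)$.) Since $Z$ is the minimal representation and every point of $Z$ is now isolated, Theorem~\ref{surprised} shows each $B\in Z$ is strongly irredundant; thus $\min\V(A)$ is a strongly irredundant representation of $A$.

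For uniqueness I would take an arbitrary irredundant representation $W$ and show $W=Z$. Because $\V(A)$ is patch closed we have $\widetilde W\subseteq\V(A)$, and $\widetilde W$ is again a representation with $\overline{\widetilde W}=\overline W=\V(A)$, the latter since $\V(A)$ is the only closed representation; by Proposition~\ref{Hoc prop}(3) this gives $\up\widetilde W=\V(A)$, whence $\min\widetilde W=\min\V(A)=Z$, as up-closure leaves the set of minimal elements unchanged. Each $B\in W$ is irredundant in $\widetilde W$ by Lemma~\ref{ti}(1), and an irredundant member of any representation must be minimal in it (were $B_0\subsetneq B$ a member, the intersection of the remaining members would already lie inside $B$, so $B$ could be dropped); hence $B\in\min\widetilde W=Z$ and $W\subseteq Z$. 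Finally $W$ cannot omit any $B^\ast\in Z$: since $B^\ast$ is irredundant in $Z$ there is $r\notin B^\ast$ with $r\in\bigcap_{B\in Z\setminus\{B^\ast\}}B\subseteq\bigcap_{B\in W}B=A\subseteq B^\ast$, a contradiction. Therefore $W=Z=\min\V(A)$, giving the asserted uniqueness.

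The main obstacle is the tree property of $\ispec R$ for arithmetical $R$; this is the sole step requiring the algebra of the ambient ring rather than the abstract theory of spectral representations. Once it is in hand, the remaining steps are direct applications of Lemma~\ref{arithmetical}, Corollary~\ref{Noetherian min} (or Theorem~\ref{Noetherian prop}), Theorem~\ref{surprised}, and Lemma~\ref{ti}.
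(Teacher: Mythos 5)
You have correctly identified $\V(A)$ as the unique closed representation of $A$ (so $\min\V(A)$ is the unique minimal representation), and your reduction of both existence and uniqueness to showing that every point of $\min\V(A)$ is isolated is in the spirit of the paper. The gap is in how you establish isolation: you apply Theorem~\ref{Noetherian prop} and Corollary~\ref{Noetherian min} to $X=\ispec R$, which requires the specialization order on $\ispec R$ (inclusion of irreducible ideals) to be a tree. That hypothesis is false for arithmetical rings, and the finiteness of $\min\V(A)$ you deduce from Corollary~\ref{Noetherian min} is also false. Let $V=\mathbb{Q}[x]_{(x)}$ and $R=\mathbb{Z}+xV$, a Pr\"ufer (hence arithmetical) domain whose maximal ideals are the $M_p=p\mathbb{Z}+xV$; the closed subsets of $\Max R$ are the finite sets and $\Max R$ itself, so $\Max R$ is Noetherian. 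For $A=xR=x\mathbb{Z}+x^2V$ one computes $A:\tfrac{x}{p}=M_p$, so every $M_p$ lies in ${\mathcal{X}}_A$ and, by the identification $\min\V(A)=\{A_{(P)}:P\in{\mathcal{X}}_A\}$ quoted from \cite[Theorem 5.8]{FHO}, the set $\min\V(A)$ contains the infinitely many pairwise incomparable irreducible ideals $B_p:=A_{(M_p)}=x\mathbb{Z}_{(p)}+x^2V$. The theorem is still true here --- $\{B_p\}_p$ is the unique, infinite, strongly irredundant representation of $xR$ --- but your finiteness conclusion cannot be, and the tree hypothesis visibly fails: $B_2$ and $B_3$ are incomparable irreducible ideals both contained in the irreducible ideal $xV$. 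The error in your justification of the tree property is the claim that an irreducible ideal $I$ lies in a unique maximal ideal and equals $IR_M\cap R$; in fact $I=IR_M\cap R$ only for maximal ideals $M$ containing the adjoint prime of $I$ (here $B_2R_{M_3}\cap R=xV\neq B_2$), so two irreducible ideals beneath a common irreducible ideal need not be contracted from a common localization and need not be comparable.

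Because of this, the isolation of the points of $\min\V(A)$ cannot be obtained by the purely topological route through $\ispec R$; some argument at the level of $\Spec R$ is needed. The paper supplies it as follows: it uses the tree property of $\Spec R$ (prime ideals only), which does hold for arithmetical rings, together with the Rush--Wallace criterion (Lemma~\ref{RW lemma}) applied to $\Max R$ to isolate each $P\in{\mathcal{X}}_A$ among the maximal Krull associated primes by a finitely generated ideal, and then the arithmetical identity $(A:r_1)+\cdots+(A:r_n)=A:(r_1R\cap\cdots\cap r_nR)$ to produce an explicit element lying in every $A_{(Q)}$ with $Q\neq P$ but not in $A_{(P)}$. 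That gives irredundance of $\min\V(A)$ directly, after which Theorem~\ref{surprised}, Corollary~\ref{irredundant is si} and Theorem~\ref{unique} finish exactly as in your first and third paragraphs. Your uniqueness argument is fine in itself, but note that its last step also invokes the irredundance of $\min\V(A)$, so it too rests on the broken step.
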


\begin{proof}  
Let $A$ be a proper ideal of $R$. As discussed before the theorem, 
 $\min \V(A) = \{A_{(P)}:P \in {\mathcal{X}}_A\} $.   
%
Let $P \in {\mathcal{X}}_A$.  Then by Lemma~\ref{RW lemma} there is a finitely generated ideal $I \subseteq P$ such that $I$ is not contained in any maximal  ideal  that does not contain $P$.  Since $R$ is an arithmetical ring, the prime ideals of $R$ form a tree under inclusion, so  the only prime ideal in ${\mathcal{X}}_A$ that contains $I$ is $P$.  
 Thus since $P$ is a union of ideals of the form $A:r$, $r \in R$, there are $r_1,\ldots,r_n \in R$ such that $P$ is the only 
 ideal in ${\mathcal{X}}_A$ containing $(A:r_1) + \cdots + (A:r_n)$. Since $R$ is arithmetical, the latter ideal is equal to  $A:(r_1R \cap \cdots \cap r_nR)$ \cite[Exercise 18(c), p.~151]{LM}. Hence $r_1R \cap \cdots \cap r_nR \subseteq \left(\bigcap_{Q \ne P}A_{(Q)}\right) \setminus A_{(P)}$, where $Q$ ranges over the prime ideals in ${\mathcal{X}}_A \setminus \{P\}$.  This shows that the representation  $\min \V(A) = \{A_{(P)}:P \in {\mathcal{X}}_A\}$ of $A$ is irredundant.


 Next, since by Lemma~\ref{arithmetical}, $\V(A)$  is a minimal closed representation of $A$, we have by Theorem~\ref{surprised} that $\min \V(A)$ is a strongly irredundant representation of $A$.  
Finally, since $\V(A)$ is a minimal closed representation of $A$, all the irreducible ideals in $\V(A)$ are critical for $A$ in the spectral representation $\V(A)$ of $A$.  Therefore, by Corollary~\ref{irredundant is si}, every irredundant representation of $A$ is strongly irredundant, and hence by Theorem~\ref{unique}(4), there is a unique irredundant representation of $A$.
\qed
\end{proof}

\begin{remark}
By \cite[Theorem 5.14]{FHO}, the converse of Theorem~\ref{irreducible dec} is also true: If every ideal $A$ of a ring $R$ can be written uniquely as an irredundant intersection of the  irreducible ideals that are minimal with respect to containing $A$, then $R$ is an arithmetical ring with Noetherian maximal spectrum. 
\end{remark}

\begin{remark} The ideas in Section 3 can also be applied to the intersections of prime ideals. Let $A$ be a radical ideal of a ring $R$. Then ${\cal V}(A) = \{P \in \Spec R:A \subseteq P\}$ is a spectral representation of $A$, each member of which is critical for $A$.  Thus $A$ has a strongly irredundant representation if and only if the set of minimal primes of $R/A$ contains a dense set of isolated points with respect to the Zariski topology (Corollary~\ref{first cor} and Theorem~\ref{unique}). Also,
every irredundant representative  of $A$ is strongly irredundant (Corollary~\ref{irredundant is si}), and 
 there is at most one irredundant representation of $A$ (Theorem~\ref{unique}). 
 Finally, if ${\cal V}(A)$ is countable, then $A$ has a strongly irredundant representation (Corollary~\ref{countable case}). 
\end{remark}


\end{document}